\numberwithin{equation}{section}
\theoremstyle{plain}
\newtheorem{lemma}{Lemma}[section]
\newtheorem{theorem}[lemma]{Theorem}
\newtheorem{corollary}[lemma]{Corollary}
\newtheorem{proposition}[lemma]{Proposition}
\newtheorem{definition}[lemma]{Definition}
\newtheorem*{proposition*}{Proposition}
\newtheorem*{theorem*}{Theorem}
\newtheorem*{definition*}{Definition}
\newtheorem*{claim*}{Claim}
\newtheorem*{notation*}{Notation}
\newtheorem*{notation}{Notation}
\newtheorem{point}[lemma]{}
\newtheorem{remark}[lemma]{Remark}
\newtheorem{example}[lemma]{Example}
\newcommand{\Gg}{\mathcal{G}}
\newcommand{\Hh}{\mathcal{H}}
\newcommand{\Oo}{\mathcal{O}}
\newcommand{\G}{\mathcal{G}}
\newcommand{\go}{\Gg^{(0)}}
\newcommand{\lsp}{\operatorname{span}}
\newcommand{\ZZ}{\mathbb{Z}}
\newcommand{\N}{{\mathbb{N}}}
\newcommand{\Z}{{\mathbb{Z}}}
\newcommand{\ol}{\overline}
\newcommand{\uloopr}[1]{\ar@'{@+{[0,0]+(-4,5)}@+{[0,0]+(0,10)}@+{[0,0] +(4,5)}}^{#1}}
\newcommand{\uloopd}[1]{\ar@'{@+{[0,0]+(5,4)}@+{[0,0]+(10,0)}@+{[0,0]+ (5,-4)}}^{#1}}
\newcommand{\dloopr}[1]{\ar@'{@+{[0,0]+(-4,-5)}@+{[0,0]+(0,-10)}@+{[0, 0]+(4,-5)}}_{#1}}
\newcommand{\dloopd}[1]{\ar@'{@+{[0,0]+(-5,4)}@+{[0,0]+(-10,0)}@+{[0,0 ]+(-5,-4)}}_{#1}}
\newcommand{\typ}{\mbox{\rm typ}}
\newcommand{\Typ}{\mbox{\rm Typ}}
\newcommand{\luloop}[1]{\ar@'{@+{[0,0]+(-8,2)}@+{[0,0]+(-10,10)}@+{[0, 0]+(2,2)}}^{#1}}
\newcommand{\Ifree}{I_{\mathrm{free}}}
\newcommand{\Ireg}{I_{\mathrm{reg}}}
\renewcommand{\_}{\ \cdot\ }
\newdimen \boxht
  \def \sumTwoIndices #1#2{\sum _{\buildrel {\scriptstyle #1}\over {\vrule height 8pt width 0pt #2}}}
\begin{document}
\title[Type Semigroup]{The groupoids of adaptable separated graphs and their type semigroups}

\author{Pere Ara}\author{Joan Bosa}
\address{Departament de Matem\`atiques, Universitat Aut\`onoma de Barcelona,
08193 Bellaterra (Barcelona), Spain, and Barcelona Graduate School of Mathematics (BGSMath).} \email{para@mat.uab.cat}\email{jbosa@mat.uab.cat}

\author{Enrique Pardo}
\address{Departamento de Matem\'aticas, Facultad de Ciencias, Universidad de C\'adiz,
Campus de Puerto Real, 11510 Puerto Real (C\'adiz),
Spain.}\email{enrique.pardo@uca.es}\urladdr{https://sites.google.com/a/gm.uca.es/enrique-pardo-s-home-page/}
\author{Aidan Sims}
\address{School of Mathematics and Applied Statistics, University of Wollongong, Wollongong NSW 2522, Australia.} \email{asims@uow.edu.au}

\thanks{The first, second and third authors were partially supported by the DGI-MINECO and European Regional Development Fund, jointly, through the grant MTM2017-83487-P.
The first and second authors acknowledge support from the Spanish Ministry of Economy and Competitiveness, through the María de Maeztu Programme for Units of Excellence in R$\&$D (MDM-2014-0445)
and from the Generalitat de Catalunya through the grant 2017-SGR-1725.
The third author was partially supported by PAI III grant FQM-298 of the Junta de Andaluc\'{\i}a. The fourth author was partially supported by the Australian Research Council grant DP150101595.}
\subjclass[2010]{Primary 16D70, Secondary 06F20, 19K14, 20K20, 46L05, 46L55}
\keywords{Steinberg algebra, Refinement monoid, Type semigroup.}
\date{\today}
\dedicatory{Dedicated to the memory of Emili Ara Aymami}
\maketitle

\begin{abstract}
Given an adaptable separated graph, we construct an associated groupoid and explore its type semigroup.
Specifically, we first attach to each adaptable separated graph a corresponding semigroup, which we prove is an $E^*$-unitary inverse semigroup.
As a consequence, the tight groupoid of this semigroup is a Hausdorff \'etale groupoid. We show that this groupoid is always amenable,
and that the type semigroups of groupoids obtained from adaptable separated graphs in this way include all finitely generated conical refinement monoids.
The first three named authors will utilize this construction in forthcoming work to solve the Realization Problem for von Neumann regular rings, in the finitely generated case.
\end{abstract}

\section*{Introduction.}\label{Sect:Intro}
There has been significant recent interest in the structure of the type semigroup of an ample
groupoid \cite{BL, PSS, RS} and its influence in determining properties of the associated
(reduced) groupoid $C^*$-algebra. In particular the stably finite versus purely infinite dichotomy
has been established, under mild hypotheses in the above-mentioned papers, using the type
semigroup. Additionally, the enveloping group of the type semigroup of an ample groupoid $\mathcal
G$ is precisely the homology group $H_0(\mathcal G)$ of the groupoid, and so the induced map $\Typ
(\mathcal G) \to \mathcal V (C^*_r(\mathcal G))$ from $\Typ (\mathcal G)$ to the Murray-von
Neumann semigroup $\mathcal V (C^*_r(\mathcal G))$ can be viewed as a non-stable precursor of the
natural map $H_0(\mathcal G) \to K_0(C^*_r(\mathcal G))$ appearing in the statement of Matui's
Conjecture (recently resolved in the negative by Scarparo \cite{Scarparo}). In particular it is of
great interest to determine the family of ample groupoids such that the map $\Typ (\mathcal G) \to
\mathcal V (C^*_r(\mathcal G))$ and its stabilized sibling $H_0(\mathcal G) \to K_0(C^*_r(\mathcal
G))$ are injective (see \cite{Matui, NO, Ortega, FKPS}).

In this paper, we introduce a new class of ample Hausdorff groupoids associated to separated
graphs, and describe concrete representations of their Steinberg algebras and $C^*$-algebras. This
new class of groupoids generalizes the well-known graph groupoids of row-finite graphs
\cite{KPRR}, and are designed to have a more general type semigroup than those. Specifically, for
a row-finite graph $E$, our methods demonstrate that the type semigroup of the associated graph
groupoid is canonically isomorphic to the graph monoid $M(E)$ introduced in \cite{AMFP}; that is,
the commutative monoid with generators $\{ a_v : v\in E^0 \}$, and defining relations
$a_v=\sum_{e\in s^{-1}(v)} a_{r(e)}$ whenever $v$ is not a sink in $E$. The graph monoids
$M(E)$, as well as all the monoids $\Typ (\mathcal G)$ of an ample groupoid $\mathcal G$, are {\it
refinement monoids}, in the sense that they satisfy the Riesz refinement property. However, the
class of graph monoids does not even cover the very natural class of all finitely generated
conical refinement monoids. Indeed it was already shown in \cite{APW08} that the finitely
generated refinement monoid $\langle p,q_1,q_2: p+q_1=p=p+q_2 \rangle$ is not a graph monoid; that is, it is not
isomorphic to $M(E)$ for any row-finite graph $E$. In this paper we show that this problem can be
solved by considering a special class of separated graphs, which we call \emph{adaptable}
separated graphs.

Our work is also motivated by the {\it realization problem for von Neumann regular rings} (see for
instance \cite{Areal,directsum}). Using the construction in this paper, the three first-named
authors show in \cite{ABP19b} that every finitely generated conical refinement monoid arises
as the monoid $\mathcal{V}( A_K(\mathcal G)\Sigma^{-1})$ associated to a von Neumann regular ring of
the form $A_K(\mathcal G)\Sigma^{-1}$ for a suitable universal localization of the Steinberg
algebra $A_K(\mathcal G)$ \cite{CFST,
Steinberg}, where $\mathcal G$ belongs to the class of groupoids constructed here
and $K$ is an arbitrary field. Steinberg algebras of ample groupoids have received quite a bit of attention
in the last few years, see for instance the survey papers \cite{CH, Rigby}.

Recall from \cite{AG12} that a separated graph is a pair $(E,C)$ where $E$ is a directed graph and
$C=\bigsqcup_{v\in E^0} C_v$ is a partition of $E^1$ which is finer than the partition induced by
the source function $s\colon E^1\to E^0$. Given a finitely separated graph, that is, a separated
graph such that all the sets in the partition $C$ are finite, the monoid $M(E,C)$ is defined in
\cite{AG12} as the commutative monoid with generators $\{ a_v : v\in E^0 \}$ and defining
relations given by the equations $a_v= \sum_{x\in X} a_{r(x)}$ for $v \in E^0$ and $X\in C_v$.
Unlike graph monoids, $M(E,C)$ is not always a refinement monoid, but sufficient conditions under
which $M(E, C)$ is a refinement monoid were identified in \cite[Section 5]{AG12}. The key
observation for the present paper is that a restricted class of separated graphs, which we have
termed {\it adaptable separated graphs}, is enough for our purposes. In fact, for this family of
separated graphs $(E,C)$, the refinement property for $M(E,C)$ does hold, and the monoids $M(E,C)$
belong to a especially well-behaved class of refinement monoids, the primely generated refinement
monoids. Moreover any finitely generated conical refinement monoid arises as $M(E,C)$ for some
adaptable separated graph $(E,C)$; see \cite{ABP19} for all these results, which we have also
summarized in Section~\ref{Sect:Prelim}.

Having identified the suitable combinatorial object, namely adaptable separated graphs, we
associate to each such separated graph a suitable inverse semigroup. Our guiding model is the
graph inverse semigroup defined in \cite{JL}, but difficulties arise in trying to modify the
definition of the graph inverse semigroup from \cite{JL} to our context. These arise from the fact
that the algebra relations naturally attached to a separated graph \cite{AG12} are not tame; that
is, they do not force commutation of range projections of the generating partial isometries, which
precludes their generating an inverse semigroup. This problem was solved in \cite{AE} by forming a
universal quotient, called the tame algebra of the separated graph, in which the range projections
of the generating partial isometries do commute. Unfortunately, though, this introduced a new
difficulty problem: in general, passing from the universal $C^*$-algebra or algebra of a separated
graph to its tame quotient as in \cite{AE} induces a nontrivial map from the monoid $M(E, C)$ to
the monoid $\mathcal{VO}(E,C)$ or $\mathcal{V}L^{{\rm ab}}_K(E,C)$; so we can no longer be sure
that we are representing the desired monoid $M(E,C)$.

Our solution to this problem is to introduce a set of auxiliary variables $\{ t_i^{v}: i\in \N,
v\in E^0 \}$ which are designed to tame the natural relations associated to the separated graph
$(E,C)$, without altering the associated monoid. Indeed the starting point of our paper is the
construction of an inverse semigroup $S(E,C)$ for any adaptable separated graph $(E,C)$, which
makes use of these auxiliary variables $t_i^v$.  The crucial semilattice of idempotents $\mathcal
E$ is described solely in terms of paths and monomials in the separated graph, and we are able to
show that $S(E,C)$ is an $E^*$-unitary inverse semigroup, and consequently the tight groupoid
$\mathcal G_{tight} (S(E,C))$ is an ample Hausdorff groupoid (see
Section~\ref{sect:def-inverse-semigroupS}). Following the by-now well-trodden path described in
\cite{ExelBraz}, we then build the space of tight filters $\widehat{\mathcal E}_{tight}$
associated to $S(E,C)$, and we relate it to the space of infinite paths in our separated graph
$(E,C)$. The desired groupoid is obtained as the groupoid $\mathcal G_{tight}(S(E,C))$ of germs of
the canonical action of $S(E,C)$ on the space $\widehat{\mathcal E}_{tight}$ of tight filters. Our
main result is Theorem~\ref{thm:monoids-agreement}: for any adaptable separated graph $(E,C)$, the
type semigroup of the ample Hausdorff groupoid $\mathcal G_{tight} (S(E,C))$ is canonically
isomorphic to the graph monoid $M(E,C)$. In particular, every finitely generated conical
refinement monoid arises as the type semigroup of a groupoid in our class.

We show in addition that the groupoids $\mathcal G := \mathcal G_{tight}(S(E,C))$ are all amenable
(Proposition~\ref{prop:amenable}), and that the corresponding Steinberg algebras $A_K(\mathcal G)$
and $C^*$-algebras $C^*(\mathcal G ) = \mathcal C^*_r(\mathcal G)$ can be described as universal
objects for appropriate generators and relations
(Theorem~\ref{thm:Steinbergalgebra-and-our-algebra} and
Corollary~\ref{cor:Groupoid-C*-algebra-and-our-algebra}). We also obtain a very concrete
description of the groupoid $\mathcal G_{tight}(S(E,C))$ (see Theorem~\ref{thm:iso-groupoids}),
similar to the one given in \cite{KPRR} for usual graphs, which makes computations much more
tractable.

\medskip

We briefly outline the contents of this paper. In Section~\ref{Sect:Prelim}, we record the basic
definitions and results we will need throughout the rest of the paper. In particular we introduce
the key notion of an {\it adaptable separated graph}. In
Section~\ref{sect:def-inverse-semigroupS}, we define a natural inverse semigroup $S(E,C)$
associated to an adaptable separated graph $(E,C)$. We provide two equivalent definitions of the
semigroup and analyse its semilattice of idempotents $\mathcal E$. We also show that $S(E,C)$ is
an $E^*$-unitary inverse semigroup (Proposition~\ref{prop:E-star-unitary-general}). We analyse the
structure of the spaces of filters, ultrafilters, and tight filters on $\mathcal E$ in
Section~\ref{sect:filters}. We show in Theorem~\ref{thm:tightfilters} that the space of
ultrafilters on $\mathcal E$  coincides with the space of tight filters on $\mathcal E$. In
Section~\ref{Sect:Algebra-Semigroup}, we show that the natural $K$-algebra $\mathcal{S}_K(E,C)$,
associated to an adaptable separated graph $(E,C)$ is isomorphic to the Steinberg algebra
$A_K(\mathcal{G}_{\text{tight}}(S(E,C)))$ of the groupoid $\mathcal{G}_{\text{tight}}(S(E,C))$ of
germs of the canonical action of $S(E,C)$ on the space of tight filters on $\mathcal E$. In
Section~\ref{sect:description-groupoid}, we provide an equivalent, more concrete picture of
$\mathcal{G}_{\text{tight}}(S(E,C))$. This is used in Section~\ref{Sect:Amenability} to show that
$\mathcal{G}_{\text{tight}}(S(E,C))$ is an amenable groupoid. We finish by showing in
Section~\ref{Sect:TypeSemigroup} that, for any adaptable separated graph $(E,C)$, the type
semigroup $\Typ(\mathcal{G}_{\text{tight}}(S(E,C)))$ of the groupoid $\mathcal
G_{\text{tight}}(S(E,C))$ is naturally isomorphic to the monoid $M(E,C)$ associated to $(E,C)$. We
deduce from the results of \cite{ABP19} that every finitely generated conical refinement monoid
arises as the type semigroup of the groupoid associated to an adaptable separated graph.

\section{Preliminaries.}\label{Sect:Prelim}

In this section, we will recall the basic definitions and results needed to follow the paper.


\subsection{Basics on commutative monoids.}
We will denote by $\N$ the semigroup of positive integers, and by $\Z^+$ the monoid of non-negative integers.

Given a commutative monoid $M$, we set $M^*:=M\setminus\{0\}$. We say that $M$ is {\it conical} if $M^*$ is a semigroup, that is, if, for all $x$, $y$ in $M$, $x+y=0$ only when $x=y=0$.

We say that  $M$ is
a {\it refinement monoid} if, for all $a$, $b$, $c$, $d$ in
$M$ such that $a+b=c+d$, there exist $w$, $x$, $y$, $z$ in $M$ such
that $a=w+x$, $b=y+z$,
$c=w+y$ and $d=x+z$.

A basic example of a refinement monoid is the monoid $M(E)$ associated to a countable row-finite graph $E$ \cite[Proposition 4.4]{AMFP}.

If $x, y\in M$, we write $x\leq y$
if there exists $z\in M$  such that $x+z = y$.
Note that $\le$ is a translation-invariant pre-order on $M$, called the {\it algebraic pre-order} of $M$. All inequalities in commutative monoids will be with respect to this pre-order.
An element $p$ in a monoid $M$ is a {\it prime element} if $p$ is not invertible in $M$, and, whenever
$p\leq a+b$ for $a,b\in M$, then either $p\leq a$ or $p\leq b$. The monoid $M$ is {\it primely generated} if every non-invertible element of $M$
can be written as a sum of prime elements.

An element $x\in M$ is {\it regular} if $2x\leq x$. An element $x\in M$ is an {\it idempotent} if $2x= x$. An element
$x\in M$ is {\it free} if $nx\leq mx$ implies $n\leq m$, for $n,m\in \N$. By \cite[Theorem 4.5]{Brook}, any element
of a primely generated refinement monoid is either free or regular.

\subsection{Adaptable Separated Graphs}\label{Section1}
In \cite{AP16} the first and third-named authors characterized the primely generated conical refinement monoids in terms of the so-called $I$-systems.
Using this theory,
the first, second and third-named authors obtained in \cite{ABP19} a combinatorial model for all finitely generated conical refinement monoids.
The basic ingredient in this combinatorial description is the theory of separated graphs \cite{AG12}.
(Note that ordinary graphs are not sufficient for this purpose, see \cite{AP17, APW08}.)


\begin{definition}[{\cite[Definitions 2.1 and 4.1]{AG12}}]\label{defsepgraph}
    {\rm A \emph{separated graph} is a pair $(E,C)$ where $E$ is a directed graph,  $C=\bigsqcup
    _{v\in E^ 0} C_v$, and
    $C_v$ is a partition of $s^{-1}(v)$ (into pairwise disjoint nonempty
    subsets) for every vertex $v$. If $v$ is a sink, we take $C_v$
    to be the empty family of subsets of $s^{-1}(v)$.

    If all the sets in $C$ are finite, we shall say that $(E,C)$ is a \emph{finitely separated} graph.

    Given a finitely separated graph $(E,C)$, we define the monoid of the separated graph $(E,C)$ to
    be the commutative monoid given by generators and relations as
    follows:
    $$M(E,C)=\Big{\langle} a_v \,\, \, (v\in E^0) \, : a_v=\sum _{\{ e\in X\}}a_{r(e)} \text{ for every } X\in C_v, v\in E^0\Big{\rangle} .$$ }
\end{definition}
We recall some basic graph-theoretic notions that we will need along the sequel. For further background, see, for
example, \cite{AAS}.
\begin{definition}
    {\rm Given a directed graph $E=(E^0,E^1, s,r)$:
        \begin{enumerate} 
        \item We define a pre-order on $E^0$ (the path-way pre-order) by $v\le w$ if and only if there is a directed path $\gamma $ in $E$ with $s(\gamma ) = w$ and $r(\gamma ) =v$.
        We denote by $\sim$ the equivalence relation associated to the pre-order $\le $ on $E^0$, that is, $v\sim w$ if and only if there are directed
            paths both from $v$ to $w$ and from $w$ to $v$. The equivalence classes with respect to $\sim$ are called the {\it transitive components} of $E$. 
         \item We will denote by $(I,\leq)$ the poset arising as the antisymmetrization of $(E^0,\leq )$, so that the elements of $I$ are the different transitive components and,
         denoting by $[v]$ the class of $v\in E^0$ in $I$,
                       we have $[v]\leq [w]$ if and only if $v\leq w$.
            \item We say that $E$ is {\it transitive} if every two vertices of $E^0$ are connected through a directed path, i.e., if $I$ is a singleton.
            \end{enumerate}
    }
\end{definition}

Before giving the formal definition (which is quite technical) of an adaptable separated graph,
which will be the combinatorial model we use throughout the paper, we give an informal description
of the basic idea. An adaptable separated graph is a separated graph $(E, C)$ such that $E$ has only
finitely many transitive components, partitioned into two types: the \emph{free} components, which
have just one vertex, and the \emph{regular} components, which can have multiple vertices and must
be row-finite graphs. The minimal free components $\{v\}$ must be sinks in $E$ in the sense that
$s^{-1}(v) = \emptyset$. For each non-minimal free component $\{v\}$ each set $X_i$ in the
partition $C_v$ of $s^{-1}(v)$ contains one loop $\alpha_i$ pointing from $v$ to itself, and then
a finite number of edges $\beta_{i,j}$ that point to transitive components different from $\{v\}$.
The regular components must be ``conventional" graphs in the sense that at every vertex $v$ in a
regular component, the partition $C_v$ is the trivial partition $\{s^{-1}(v)\}$, and they must
have the property that every vertex in the component emits at least two edges in that component.

The idea behind the use of the word ``adaptable" is that given a pre-specified partitioned and
partially ordered set $(\Ifree \sqcup \Ireg, \le)$, the separated graph $(E, C)$ is ``adapted" to
$(I, \le)$, if it is an adaptable separated graph and there is an order isomorphism between the
ordered set of transitive components of $(E, C)$ and the partial order $(\Ifree \sqcup \Ireg,
\le)$ that carries the free components to $\Ifree$ and the regular components to $\Ireg$.

We now present the formal definition.

\begin{definition}
    \label{def:adaptable-sepgraphs}
    {\rm Let $(E,C)$ be a finitely separated graph and let $(I,\le )$ be the antisymmetrization of $(E^0,\leq)$. We say that $(E,C)$ is {\it adaptable} if $I$ is finite,
    and there exist a partition $I=\Ifree \sqcup \Ireg $, and a family of subgraphs $\{ E_p \}_{p\in I}$ of $E$ such that the following conditions are satisfied:
        \begin{enumerate}
            \item $E^0=\bigsqcup_{p\in I} E_p^0$, where $E_p$ is a transitive row-finite graph
                if $p\in \Ireg$ and $E_p^0= \{ v^p \}$ is a single vertex if $p\in \Ifree$.
            \item For $p\in \Ireg$ and $w\in E_p^0$, we have that $|C_w|= 1$ and
                $|s_{E_p}^{-1}(w)| \ge 2$. Moreover, all edges departing from $w$ either belong to the graph $E_p$ or connect $w$ to a vertex $u\in E_q^0$,
            with $q<p$ in $I$.
            \item For $p\in \Ifree$, we have that $s^{-1}(v^p) = \emptyset $ if and only if $p$
                is minimal in $I$. If $p$ is not minimal, then $C_{v^p}$ partitions $s^{-1}(v)$ into
                finitely many sets $C_{v^p}=\{X^{(p)}_1,\dots ,X^{(p)}_{k(p)}\}$. For each $i
                \le k(p)$, the set $X^{(p)}_i$ is of the form
            $$X^{(p)}_i = \{ \alpha (p,i) ,\beta (p,i,1),\beta (p,i,2),\dots , \beta(p, i, g(p,i)) \}$$
            for some $g(p,i) \ge 1$,  where $\alpha (p,i)$ is a loop, i.e., $s(\alpha(p,i))= r(\alpha (p,i)) = v^p$, and $r(\beta (p,i,t))\in E^0_q$ for $q<p$ in $I$.
           We have $E_p^1 = \{\alpha(p,1), \dots, \alpha (p, k(p))\}$.
           \end{enumerate}
        The edges connecting a vertex $v\in E_p^0$ to a vertex $w\in E_q^0$ with $q<p$ in $I$ will be called {\it connectors}.} \qed
\end{definition}

Observe that since the order on $(I, \le)$ is induced by the path-way pre-order on $E^0$, if $e
\in E^1$ and $s(e) \in E_p^0$ with $p \in \Ireg$, then $r(e)$ either belongs to $E_p^0$ (which happens if and only if $e\in E^1_p$), or
belongs to $E_q^0$ for some $q$ with $q < p$. Likewise, for $p \in \Ifree$, each edge of the form
$\beta(p,i,j)$ must point to some component $E_q$ with $q < p$. 

This is the main theorem connecting graphs and monoids.

\begin{theorem}\cite{ABP19}
\label{thm:maingraphs-monoids}
    The following two statements hold:
    \begin{enumerate}
        \item If $(E,C)$  is an adaptable separated graph, then $M(E,C)$ is a primely generated conical refinement monoid.
        \item For any finitely generated conical refinement monoid $M$, there exists an adaptable separated graph $(E,C)$ such that $M\cong M(E,C)$.
    \end{enumerate}
\end{theorem}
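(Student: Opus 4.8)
The plan is to route both statements through the theory of $I$-systems for primely generated conical refinement monoids developed in \cite{AP16}, together with Brookfield's dichotomy \cite[Theorem 4.5]{Brook}. For part (1), conicality is immediate: each defining relation rewrites a single generator into a \emph{nonempty} positive sum of generators (every $X \in C_v$ is nonempty), so the empty word is the only word equivalent to $0$, whence $x+y=0$ forces $x=y=0$. The substantive work is to read off the prime structure directly from the combinatorics of $(E,C)$. For each transitive component $p \in I$ I would produce a distinguished element $\mathfrak a_p$: when $p \in \Ifree$ I set $\mathfrak a_p = a_{v^p}$, and when $p \in \Ireg$ I would show, using transitivity of $E_p$ together with the hypothesis $|s_{E_p}^{-1}(w)| \ge 2$, that all generators $a_w$ with $w \in E_p^0$ coincide with a single idempotent element $\mathfrak a_p$ satisfying $2\mathfrak a_p = \mathfrak a_p$. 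The defining relations then collapse to a system indexed by $I$: each $X^{(p)}_i$ for a free non-minimal $p$ yields $a_{v^p} = a_{v^p} + \sum_t a_{r(\beta(p,i,t))}$ (the loop $\alpha(p,i)$ returns $a_{v^p}$), and each regular-vertex relation yields $\mathfrak a_p = 2\mathfrak a_p + (\text{strictly lower terms})$; in both cases the additional terms live in components $q < p$.

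Next I would establish primeness of each $\mathfrak a_p$ by downward induction on the finite poset $I$, using that every collapsed relation expresses a generator in terms of itself plus strictly lower generators, so the submonoid generated by $\{\mathfrak a_q : q \le p\}$ is an order ideal and $\mathfrak a_p$ cannot be dominated by a sum unless it is already dominated by one of the summands. This shows $M(E,C)$ is primely generated, with $\mathfrak a_p$ regular exactly when $p \in \Ireg$ and free exactly when $p \in \Ifree$, consistent with \cite[Theorem 4.5]{Brook}. To obtain refinement I would assemble these data into an $I$-system in the sense of \cite{AP16} — the poset $I$, its free/regular partition, and the multiplicity data recording how each prime absorbs the lower ones — and verify the $I$-system axioms; the characterization theorem of \cite{AP16} then delivers that $M(E,C)$ is a primely generated conical refinement monoid. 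I expect the verification of refinement, whether carried out directly by a Riesz-interpolation argument stratified over $I$ or by matching the $I$-system axioms, to be the main technical obstacle in part (1).

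For part (2), I would begin from the fact that a finitely generated refinement monoid is automatically primely generated (Brookfield), so $M$ is primely generated conical and hence, by \cite{AP16}, is encoded by an $I$-system: a finite poset of primes partitioned into free and regular ones, together with the data governing absorption among them. The task is then purely constructive, namely to build an adaptable separated graph $(E,C)$ whose collapsed relation system, as computed in part (1), reproduces this $I$-system. For each regular prime I would install a transitive row-finite graph in which every vertex emits at least two internal edges (for instance a suitable cycle with doubled edges), forcing idempotent behaviour; for each free prime I would install a single vertex $v^p$ and encode each required relation $a_p = a_p + (\text{lower})$ by one partition set $X^{(p)}_i$ consisting of a loop $\alpha(p,i)$ together with connectors $\beta(p,i,t)$ realizing the prescribed multiplicities of lower primes. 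The poset order is realized through the targets of the connectors, which by construction point only to strictly lower components, as demanded by adaptability.

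The delicate point — and what I expect to be the main obstacle in part (2) — is to realize arbitrary multiplicities and the exact absorption relations of $M$ within the rigid format of Definition~\ref{def:adaptable-sepgraphs} (one loop per free partition set, at least two internal edges per regular vertex, connectors only to strictly lower components), while ensuring that the resulting $M(E,C)$ is isomorphic to $M$ on the nose rather than merely to a quotient or to some refinement of it. This would require careful bookkeeping of generators and relations and an inductive verification down the poset $I$ that the collapse computed in part (1) introduces no spurious identifications, so that the isomorphism $M \cong M(E,C)$ holds exactly.
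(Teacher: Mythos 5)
This theorem is not proved in the paper at all: it is imported verbatim from \cite{ABP19}, so there is no in-paper argument to compare against. Your high-level route --- conicality from the nonemptiness of the sets $X\in C_v$, prime generation read off component by component, refinement via the $I$-system machinery of \cite{AP16}, and for part (2) Brookfield's theorem that finitely generated refinement monoids are primely generated followed by a graph realizing the resulting $I$-system --- is in fact the strategy of that reference, as the paper's own preliminaries indicate. So the architecture of your proposal is sound.

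However, there is a genuine error in how you treat the regular components, and it is fatal to both halves as written. You claim that for $p\in\Ireg$ all generators $a_w$, $w\in E_p^0$, coincide with a single \emph{idempotent} element $\mathfrak a_p$ satisfying $2\mathfrak a_p=\mathfrak a_p$. This is false. Take $E_p$ to be a single vertex $v$ with three loops and $C_v=\{s^{-1}(v)\}$; this is a legitimate adaptable separated graph with $I=\Ireg$ a singleton, and $M(E,C)=\langle a_v : a_v=3a_v\rangle=\{0,a_v,2a_v\}\cong\{0\}\sqcup \Z/2\Z$, in which $a_v\neq 2a_v$. More generally, in a transitive component with $|s^{-1}_{E_p}(w)|\ge 2$ the relation $a_w=\sum_{e}a_{r(e)}$ only yields $2a_w\le a_w$ (regularity), never $2a_w=a_w$, and distinct vertices of $E_p^0$ give mutually dominating but generally \emph{distinct} generators. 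The point you are missing is that the $I$-system datum attached to a regular prime in \cite{AP16} is a (possibly nontrivial) abelian group $G_p$, not just an idempotent; your collapse discards exactly this group. In part (1) this means your "collapsed relation system" is not the $I$-system of $M(E,C)$ and the verification of the axioms cannot go through as described; in part (2) the consequence is worse: any construction that "forces idempotent behaviour" at regular primes cannot realize monoids such as $\{0\}\sqcup\Z/n\Z$, which are finitely generated conical refinement monoids. The correct construction must choose the regular graphs $E_p$ (e.g.\ via their vertex matrices) so that the cokernel data reproduces the prescribed groups $G_p$, which is precisely the delicate bookkeeping carried out in \cite{ABP19}.
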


In particular, it is shown in \cite{ABP19} that, for an adaptable separated graph $(E,C)$, all the
elements $a_v$, for $v\in E^0$, are prime elements of the monoid $M(E,C)$, and that $a_v$ is free
(respectively, regular) in $M(E,C)$ if and only if $[v]\in \Ifree$ (respectively, $[v]\in \Ireg$).
We often refer to the elements of $\Ifree$ as {\it free primes} and to the elements of $\Ireg$ as
{\it regular primes}.


\subsection{Groupoids and Steinberg algebras}




Recall that a groupoid is a small category in which every morphism
is an isomorphism (see \cite{Renault} for further details). Given a
groupoid ${\mathcal G}$, we will always denote its \emph {unit
space} by $\go $, the set of \emph{composable pairs} by ${\mathcal
G}^{(2)}$, and its \emph {source} and \emph {range} maps by $s$ and
$r$, respectively. A \emph {bisection} in ${\mathcal G}$ is a subset
$U\subseteq {\mathcal G}$ such that the restrictions of $r$ and $s$
to $U$ are both injective.

A topological groupoid ${\mathcal G}$ is said to be \emph {\'etale} if $\go $ is locally compact
and Hausdorff in the relative topology, and its range map (equivalently, its source map) is a
local homeomorphism from ${\mathcal G}$ to $\go $. It is easy to see that the topology of an
\'etale groupoid admits a basis of open bisections. In an \'etale groupoid, $\go $ is open in
${\mathcal G}$. If, in addition,  ${\mathcal G}$ is Hausdorff, then $\go $ is also closed in
${\mathcal G}$.  An \'etale groupoid ${\mathcal G}$ is \emph {ample} if $\mathcal G$ admits a
basis of open compact bisections.

\begin {definition}[\cite {Steinberg}, \cite {CFST}]
\label{def:Steinbergalgebra}
{\rm Given an ample groupoid ${\mathcal G}$, and a field with involution
$(K,*)$, the \emph{Steinberg algebra} associated to ${\mathcal G}$
is defined to be the $*$-algebra over $K$
$$A_K(\mathcal G)= \lsp\{1_B : B \text{ is an open compact bisection }\}$$ with the \emph{convolution product}
  $$
  (fg)(\gamma ) = \sumTwoIndices {(\gamma _1,\gamma _2)\in {\mathcal G}^{(2)}}{\gamma _1\gamma _2=\gamma }f(\gamma _1)g(\gamma _2).
  $$
  and the involution $f^*(\gamma ) = f(\gamma^{-1})^*$.
When $\G$ is Hausdorff, $A_K(\mathcal G)$ is just the $*$-algebra of
compactly supported, locally constant functions $f\colon \mathcal G
\to K$.

It is interesting to notice that $1_B 1_D = 1_{BD}$, whenever $B$
and $D$ are compact open bisections in ${\mathcal G}$.}
\end {definition}



\section{The inverse semigroup associated to an adaptable separated graph}
\label{sect:def-inverse-semigroupS} In this section we define the semigroup $S(E,C)$ associated to
an adaptable separated graph $(E,C)$, and show that it is an $E^*$-unitary inverse semigroup.

Recall that an {\it inverse semigroup} is a semigroup $S$ such that for each $s\in S$ there is a unique $s^*\in S$ such that
$s=ss^*s$ and $s^*=s^*ss^*$, see \cite{Lawson}. Note that any two idempotents of an inverse semigroup commute (\cite[Theorem 3]{Lawson}), and the set $E(S)$ of all idempotents of $S$ is a meet-semilattice
with the operation given by the product of $S$.

A $*$-semigroup is a semigroup $S$ endowed with a unary operation $*$ such that $s^{**}= s$ and $(st)^*= t^*s^*$ for $s,t\in S$.
We first define $S(E,C)$ as an abstract $*$-semigroup by using generators and relations, and then
we prove that it is isomorphic to a concrete $*$-semigroup defined in terms of paths and monomials
in our separated graph. This will be very useful in showing that $S(E,C)$ is an inverse semigroup
and characterizing the semilattice of idempotents of $S(E,C)$, which is a key step for all what
follows.

\subsection{Definition of the inverse semigroup \texorpdfstring{$S(E,C)$}{S(E,C)}}
We start by defining $S(E,C)$ by generators and relations. Roughly, it is generated by elements
$v$ indexed by vertices of $E$, elements $e$ and $e^*$ indexed by edges of $E$, and some
additional variables $t^v_i$ and $(t^v_i)^{-1}$ whose role is to ``tame'' the relations associated
to the separated graph. Specifically, for each $v\in E^0$, we consider a collection of mutually
commuting elements $\{ t^v_i, (t^v_i)^{-1}\mid i\in \N \}$ such that
$t_i^v(t_i^v)^{-1}=v=(t_i^v)^{-1}t_i^v$, and $(t^v_i)^*=(t_i^v)^{-1}$. The way these elements
interact with the natural generators corresponding to the separated graph is specified below.
Before we give the definition of $S(E,C)$, let us fix some notation.

 \begin{notation}

            {\rm Assume the notation provided in Definition \ref{def:adaptable-sepgraphs}. If $p\in I$ is {\bf non-minimal} and {\bf free}, we denote by  $\sigma^p$ the map $\N \to \N$ given by
            $$\sigma^p (i) = i+k(p)-1.$$
            Moreover, if $1\le j \le  k(p)$, we denote by $\sigma_j^p$ the unique bijective, non-decreasing map from $\{1, \dots ,k(p)\}\setminus \{j\}$ onto $\{1, \dots , k(p)-1\}$.}

 \end{notation}

 We are now ready for the definition of the semigroup $S(E,C)$. Recall from Definition~\ref{def:adaptable-sepgraphs} the definition of an adaptable separated graph.

 \begin{definition}
 {\rm  Given an adaptable separated graph $(E,C)$, denote by $S(E,C)$ the $*$-semigroup (with $0$) generated by $E^0\cup E^1\cup \{(t_i^v)^{\pm 1} \mid i\in \N, v\in E^0 \}$ and with defining relations given by all relations
 in~{\bf\ref{pt:KeyDefs}}
 except {\bf\ref{pt:KeyDefs}}(ii)(d)~and~{\bf\ref{pt:KeyDefs}}(1)(ii).}
 \end{definition}

\begin{point}[Relations] \label{pt:KeyDefs} {\rm There are two blocks of relations in which we are interested. In the first block we write the natural relations arising from the separated
graph structure (cf. \cite{AG12}). In the second block, we specify the relation between the generators of $S(E,C)$, using the special form of our adaptable separated graph.

{\bf Block 1}:

\begin{enumerate}[\rm (i)]
\item For all $v, w\in E^0$, we have $v\cdot w=\delta_{v,w}v$ and $v=v^*$.
\item For all $e\in E^1$, we have:
\begin{enumerate}[\rm (a)]
\item $e=s(e)e=er(e)$
\item $e^*e=r(e)$
\item $e^*f=\delta_{e,f}r(e)$ if $e,f\in X\subseteq C_{s(e)}$.
\item $v=\sum_{e\in X}ee^*$, for $X\in C_v$, $v\in E^0$.
\end{enumerate}
\end{enumerate}

{\bf Block 2}:

\begin{enumerate}
 \item For each {\bf free} prime $p\in I$ and $i=1,\ldots,k(p)$,  we have:
  \begin{enumerate}[\rm (i)]
   \item $ \quad \alpha(p,i)^*\alpha(p,i)=v^p$
    \item $$\alpha(p,i)\alpha(p,i)^*=v^p-\sum^{g(p,i)}_{t=1} \beta(p,i,t)\beta(p,i,t)^* $$
     \item For $i\neq j$, $\,\, \alpha(p,i)\alpha(p,j)=\alpha(p,j)\alpha(p,i)$,   and   $\,\, \alpha(p,i)\alpha(p,j)^*=\alpha(p,j)^*\alpha(p,i)$.
  \item $\beta(p,i,s)^*\beta(p,j,t)=0$ if either $i\neq j$,  or $i=j$ and $s\neq t$. (Note that when $i=j$ and $s\neq t$, these relations follow from the separated graph relations).
  \item $\alpha (p,i)^* \beta (p,i,t) = 0 = \beta (p,i,t)^* \alpha (p,i) $ for all $1\le  i \le k(p)$ and all $1\le t\le g(p,i)$.
  
\vspace{0.2cm}
  Note that relations (i), (ii) and (v) follow from the separated graph relations, i.e., from the relations given in Block 1.
 \end{enumerate}
\item Moreover, in terms of the $\{t^v_{i}\}$, we impose the following relations:
\begin{enumerate}[\rm (i)]
\item For each $v\in E^0$, $\{ (t_i^v)^{ \pm 1} :  i\in \N \}$ is a family of mutually commuting elements such that
$$ vt_i^v = t_i^v =t_i^v v,\qquad t_i^v(t_i^v)^{-1} = v= (t_i^v)^{-1}t_i^v,\qquad (t_i^v)^*= (t_i^v)^{-1} .$$
\item If $p\in I$ is {\bf regular}, $e\in E^1$ is such that $s(e) \in E_p^0$ and $i\in \N$,
$$t^{s(e)}_i e = e t^{r(e)}_i.$$
\item If $p\in I$ is {\bf free}, $i\in \N$,  $1\le j \le k(p)$ and  $1\le s \le g(p,j)$,
$$(t_i^{v^p})^{\pm 1} \beta (p,j,s) = \beta (p,j,s) (t^{r(\beta (p,j,s))}_{\sigma^p (i)})^{\pm 1}, $$
\item If $p\in I$ is {\bf free}, $i\neq j$, and $1\le s \le g(p,j)$,
   $$\alpha(p,i)\beta(p,j,s)=\beta(p,j,s)t^{r(\beta(p,j,s))}_{\sigma^p_j(i)},  \text{ and }\,\,  \alpha (p,i)^* \beta (p,j,s) = \beta(p,j,s)(t^{r(\beta(p,j,s))}_{\sigma^p_j(i)})^{-1}.$$
   \item If $p\in I$ is {\bf free}, $t^{v^p}_i\alpha(p,j)=\alpha(p,j)t^{v^p}_i$ and $t^{v^p}_i\alpha(p,j)^*=\alpha(p,j)^*t^{v^p}_i$ for all $i\in\mathbb N$ and $j\in \{ 1,\dots , k(p) \}$.
\end{enumerate}
\end{enumerate}}
 \end{point}

 \begin{remark}
  \label{rem:starrelationsOK} {\rm Since we are working within the category of $*$-semigroups, the  $*$-relations of all the relations described in~{\bf\ref{pt:KeyDefs}} are indeed enforced in
  the $*$-semigroup $S(E,C)$.}
 \end{remark}

 We next plan to provide a different description of $S(E,C)$. This will be given via the paths that one can intuitively associate to any
 adaptable separated graph. We show in Proposition~\ref{prop:iso-with-universal} that $S(E,C)$ is isomorphic to this semigroup, which we will momentarily denote by $S$.

We must first introduce the notion of a \emph{connector path}, which we will abbreviate to c-path,
and also a standard form in which c-paths can be written. Informally, a c-path is any path
$\gamma$ in $E$ with the following two properties: firstly, if $\gamma$ is not a vertex, then its
final edge is a connector; and secondly, whenever $\gamma$ visits the vertex $v^p$ corresponding
to a free prime $p \in \Ifree$, there is a single $i \le k(p)$ such that the segment of $\gamma$
based at $v^p$ has the form $\alpha(p,i)^n$ and is followed by one of the corresponding connectors
$\{\beta(p, i, j) : j \le g(p,i)\}$. Since every nontrivial c-path ends in a connector, it can be
factorised into a standard form $\gamma = \widehat{\gamma}_1 \cdots \widehat{\gamma}_j$ so that the final
edge of each $\widehat{\gamma}_j$ is a connector, and the initial segment of $\widehat{\gamma}_j$ before
its final edge is a finite path within a single transitive component of $E$.
%
%
%
More formally, we make the following definition.

\begin{definition}[c-paths]
\label{def:finitepath}

{\rm Let $(E,C)$ be an adaptable separated graph. Then, we define a {\bf step} from a vertex $v\in E_p^0$ to a vertex $w\in E_q^0$ with $q<p$, denoted by $\widehat\gamma_{v,w}$,as follows:
 \begin{enumerate}
  \item if $v=v^p$  for $p$ a free prime, then a step from $v^p$ to $w$ is defined as $$\widehat\gamma_{v,w}:=\alpha(p,i)^m\beta(p,i,t)\text{ for some }i \text{ and some } m\ge 0, \text{ where } r(\beta(p,i,t))=w.$$
  \item if $v\in E^0_p$ for a regular prime $p$, then a step from $v$ to $w$ is defined as $$\widehat\gamma_{v,w}:=\mathbf \gamma\beta , \text{ with }s(\beta) =v', r(\beta)=w ,$$
  where $\gamma$ is a directed path of finite length connecting $v$ and $v'$ in $E_p$, and $\beta$ is a connector from $v'$ to $w$.
 \end{enumerate}
Then, given two vertices $v\in E_p^0$ and $w\in E_q^0$ with $p>q$ in $I$, a \emph{c-path} from $v$
to $w$ is a concatenation of steps starting at $p$ and ending at $q$; that is, a path of the form
\begin{equation}\label{eq:standard form}
    \gamma_{v,w}:=\widehat\gamma_{v_0,v_1}\ldots\widehat\gamma_{v_{n-1},v_n}
\end{equation}
such that there exist a sequence $p=q_0 > q_1>q_2>\ldots>q_n=q$ in $I$ and vertices $v_i\in
E^0_{q_i}$, with $v_0=v$ and $v_n=w$, such that each $\widehat{\gamma}_{v_i, v_{i+1}}$ is a step from
$v_i$ to $v_{i+1}$. Given a c-path $\gamma$ with source $v$ and range $w$, the
factorisation~\eqref{eq:standard form} is unique, and is called the \emph{standard form} of
$\gamma$.

The \emph{depth} of the $c$-path $\gamma_{v,w}$ is the number of steps in the c-path; so if
$\gamma_{v,w} = \widehat\gamma_{v_0,v_1}\ldots\widehat\gamma_{v_{n-1},v_n}$ in standard form, then the
depth of $\gamma_{v,w}$ is $n$.

A {\bf trivial} c-path is a single vertex $v\in E^0$, and these trivial c-paths have depth 0.

Given two c-paths $\gamma_1,\gamma_2$, we will write $\gamma_1\prec\gamma_2$ if $\gamma_2=\gamma_1
\eta$ for some other c-path $\eta$; we write $\gamma_1\not\prec\gamma_2$ otherwise.}
\end{definition}

\begin{remark}
{\rm Our c-paths are only some of the allowed paths of finite length in $E$. 
The usual length of a path of finite length
$\gamma $  will be denoted by $|\gamma |$. Later on, we will also give a technical sense to the
following terms: infinite path, semifinite path, and $\mathcal E$-path.}
\end{remark}

\begin{definition}[Monomials]
\label{def:monomial}{\rm
We continue with our standing assumptions on $(E,C)$.
We now define the monomials as the possible multiplicative expressions one can form using generators (excluding connectors) corresponding to a given prime.
They will be denoted by $\mathbf m(p)$ for $p\in I$. Namely,
\begin{enumerate}
 \item if $p$ is a {\bf free} prime, we define $$\mathbf m(p)=(t^{v^p}_{i_1})^{d_1}\ldots (t^{v^p}_{i_r})^{d_r}\prod^{k(p)}_{j=1}\alpha(p,j)^{k_j}(\alpha(p,j)^*)^{l_j}, \, \, d_1,\ldots,d_r\in\mathbb Z\setminus\{0\},r\geq 0,
 k_j,l_j\ge 0$$
 \item if $p$ is a {\bf regular} prime, we define $$\mathbf m(p)=(t^{v}_{i_1})^{d_1}\ldots (t^{v}_{i_r})^{d_r}\gamma\nu^*,$$ where $\gamma,\nu$ are paths of finite length in $E_p$ satisfying $s(\gamma)=v$, $v\in E^0_p$,
 and $r(\gamma)=r(\nu)$.
\end{enumerate}
For $p$ a free prime, two monomials $\mathbf m (p)$ and $\mathbf m'(p)$ corresponding to $p$ are equal if they have the same exponents of $t^{v^p}_i, \alpha (p,j)$ and $\alpha (p,j)^*$.
For $p$ a regular prime, two monomials $\mathbf m (p)$ and $\mathbf m'(p)$ corresponding to $p$ are equal if they have the same exponents of $t^{v}_i$, and $\gamma = \gamma '$, $\nu = \nu'$, where $\gamma ,\nu$
correspond to $\mathbf m (p)$ and $\gamma', \nu'$ correspond to $\mathbf m'(p)$. }
\end{definition}

Guided by the multiplication rules defined in~{\bf\ref{pt:KeyDefs}}, we will define an inverse
semigroup $S$ using c-paths and monomials. For the moment, let us deal with the set of monomials
at a given $p\in I$.

\begin{lemma}
 \label{lem:mult-of-monomials}
 For each $p\in I$, the set of monomials at $p$, together with $\{ 0 \}$ in case $p\in \Ireg$, forms a $*$-semigroup. Moreover we have
 \begin{equation}
  \label{eq:reg-for-monomials}
\mathbf m (p) \mathbf m (p)^* \mathbf m (p) = \mathbf m (p)
    \end{equation}
  for each monomial $\mathbf m (p)$.
 \end{lemma}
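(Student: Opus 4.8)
The plan is to treat the two cases $p\in\Ifree$ and $p\in\Ireg$ separately, in each case exhibiting an explicit bijection between the monomials at $p$ and a concrete, manifestly associative $*$-semigroup, under which the multiplication and involution dictated by the relations in~\ref{pt:KeyDefs} correspond to the ambient operations. Since each target will be a direct product, or a zero-amalgamation, of inverse semigroups, both the $*$-semigroup axioms and the regularity identity~\eqref{eq:reg-for-monomials} follow at once, the latter because $xx^*x=x$ holds in every inverse semigroup. The point of routing through such models is that associativity of the naive ``reduce the product word'' operation is then inherited for free.

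First I would handle a free prime $p$. Among the generators $t^{v^p}_i,\alpha(p,j),\alpha(p,j)^*$ the only available reductions are the bicyclic relation $\alpha(p,j)^*\alpha(p,j)=v^p$ from~\ref{pt:KeyDefs}(1)(i), the commutation of distinct $\alpha(p,j)$'s from~\ref{pt:KeyDefs}(1)(iii), and the commutation of the $t^{v^p}_i$ with one another and with every $\alpha(p,j)^{\pm}$, from~\ref{pt:KeyDefs}(2)(i) and~\ref{pt:KeyDefs}(2)(v); note that~\ref{pt:KeyDefs}(1)(ii) and~\ref{pt:KeyDefs}(ii)(d), which would otherwise spoil the bicyclic behaviour of $\alpha\alpha^*$ and of the vertex, are excluded from $S(E,C)$. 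Using $\alpha(p,j)=v^p\alpha(p,j)=\alpha(p,j)v^p$, I would identify the $j$-th factor $\alpha(p,j)^{k_j}(\alpha(p,j)^*)^{l_j}$ with the normal form $a^{k_j}b^{l_j}$ of the bicyclic monoid $\mathcal B=\langle a,b\mid ba=1\rangle$ (with $v^p$ playing the role of $1$), and the $t$-part with an element of the free abelian group $\bigoplus_{i\in\N}\Z$. This gives a bijection from the monomials at $p$ onto $\big(\bigoplus_{i\in\N}\Z\big)\times\prod_{j=1}^{k(p)}\mathcal B$. A direct check that $(\alpha^*)^{l}\alpha^{k'}$ reduces exactly as the bicyclic product $b^{l}a^{k'}$ shows the bijection is multiplicative, and since $(a^kb^l)^*=a^lb^k$ and $(t_i^{v^p})^*=(t_i^{v^p})^{-1}$, it is $*$-preserving; the target is a product of inverse semigroups, has no zero, and satisfies~\eqref{eq:reg-for-monomials} componentwise.

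Next I would handle a regular prime $p$. Here the path part $\gamma\nu^*$ lives in the graph inverse semigroup $G(E_p)$ of $E_p$ in the sense of \cite{JL}: the relations $e^*e=r(e)$ and $e^*f=\delta_{e,f}r(e)$, the latter valid for \emph{all} $e,f$ with a common source because $|C_w|=1$ for $w\in E_p^0$, are exactly those defining $G(E_p)$, and $G(E_p)$ carries a zero. The essential new feature is that the transport relation $t^{s(e)}_i e=e\,t^{r(e)}_i$ of~\ref{pt:KeyDefs}(2)(ii) preserves the index $i$; hence transporting a $t$-monomial along any path in $E_p$ leaves its underlying element of $\bigoplus_{i\in\N}\Z$ unchanged and merely relabels the base vertex. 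Using this to collect all $t$'s at the left based at $s(\gamma)$, as in Definition~\ref{def:monomial}, I would obtain a bijection between the monomials at $p$ together with $0$ and the zero-amalgamated product $\big(\bigoplus_{i\in\N}\Z\times(G(E_p)\setminus\{0\})\big)\cup\{0\}$, with $(h_1,x_1)(h_2,x_2)=(h_1+h_2,\,x_1x_2)$ when $x_1x_2\neq 0$ and $=0$ otherwise, and $(h,x)^*=(-h,x^*)$. Tracking $\nu_1^*\gamma_2$ through the two nonzero graph-inverse cases $\gamma_2=\nu_1\delta$ and $\nu_1=\gamma_2\delta$, while transporting the intervening $t$-factor past the surviving segment, confirms multiplicativity; the $t$-parts simply add because transport is trivial on indices.

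The routine content is these reduction computations. The one genuinely delicate point is, in the regular case, the interleaving of the graph-inverse cancellations with the transport of the $t_i$ past the surviving path segments, which is precisely where associativity could \emph{a priori} fail. Reading off the isomorphisms onto the manifestly associative semigroups above sidesteps this: once multiplicativity of the bijections has been checked on the generators and the defining relations, associativity, the involution axioms $x^{**}=x$ and $(xy)^*=y^*x^*$, and the identity $\mathbf m(p)\mathbf m(p)^*\mathbf m(p)=\mathbf m(p)$ are all inherited from the target inverse semigroups, with $0\cdot 0^*\cdot 0=0$ disposing of the zero in the regular case.
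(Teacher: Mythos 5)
Your argument is correct and shares the paper's overall strategy---identify the monomials at $p$ with a concrete, manifestly associative $*$-semigroup---but the targets differ in both cases, and the comparison is instructive. For $p\in\Ireg$ the paper identifies the monomials together with $0$ with a $*$-subsemigroup of the multiplicative semigroup of the Leavitt path algebra $L_{\Z[t_i^{\pm 1}]}(E_p)$, via $(t_i^v)^{\pm1}\leftrightarrow t_i^{\pm1}\cdot v$; since the Leavitt path algebra imposes $v=\sum_{e\in s^{-1}(v)}ee^*$---precisely the relation excluded from $S(E,C)$---the paper must invoke the hypothesis $|s_{E_p}^{-1}(v)|\ge 2$ of Definition~\ref{def:adaptable-sepgraphs}(2) together with \cite[Corollary~1.5.12]{AAS} to ensure that distinct formal monomials remain distinct in the target. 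Your model, the graph inverse semigroup of $E_p$ in the sense of \cite{JL} zero-amalgamated with $\bigoplus_{i\in\N}\Z$, does not impose that relation, so no injectivity argument is needed; it is arguably the more natural target. For $p\in\Ifree$ the paper writes the product out explicitly ($d_i^{(3)}=d_i^{(1)}+d_i^{(2)}$, $k_j^{(3)}=\max\{k_j^{(1)},k_j^{(1)}+k_j^{(2)}-l_j^{(1)}\}$, $l_j^{(3)}=\max\{l_j^{(2)},l_j^{(2)}+l_j^{(1)}-k_j^{(2)}\}$) and asserts that associativity and~\eqref{eq:reg-for-monomials} are easily checked; your $\bigl(\bigoplus_{i\in\N}\Z\bigr)\times\prod_{j=1}^{k(p)}\mathcal{B}$, with $\mathcal{B}$ the bicyclic monoid, is exactly that formula in disguise, and recognizing the target as an inverse monoid makes associativity and~\eqref{eq:reg-for-monomials} automatic. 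In both cases the residual work---checking that the reductions forced by~{\bf\ref{pt:KeyDefs}}, including the transport of the $t_i$ past surviving path segments, agree with the target multiplication---is the same computation the paper leaves implicit, but your packaging concentrates it in a single multiplicativity check on generators rather than a bare-hands associativity verification.
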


\begin{proof}
 Assume first that $p$ is regular. Then, by using our assumption that $|s_{E_p} (v)| \ge 2$ for all $v\in E_p^0$ (Definition~\ref{def:adaptable-sepgraphs}(2)),
 we see, using \cite[Corollary 1.5.12]{AAS}, that the set of monomials $\mathbf m (p)$ at $p$, together with $\{ 0 \}$, can be identified with a $*$-subsemigroup of the multiplicative semigroup of the
 Leavitt path algebra $L_{\Z[t_i^{\pm 1}]}(E_p)$, where $\Z [t_i^{\pm 1}]$ is the ring of
 Laurent polynomials on $\{ t_i \mid i\in \N \}$. Under this correspondence $(t_i^v)^{\pm 1} \longleftrightarrow (t_i)^{\pm 1} \cdot v$ for each $v\in E^0_p$.
 Equality~\eqref{eq:reg-for-monomials} is easily verified.

 Suppose now that $p$ is free. Then, the multiplication of $p$-monomials is defined using the rules in~{\bf\ref{pt:KeyDefs}}. Concretely, suppose that
 $$\mathbf m_s (p) = \prod _i (t^{v^p}_{i})^{d_i^{(s)}} \prod^{k(p)}_{j=1}\alpha(p,j)^{k_j^{(s)}}(\alpha(p,j)^*)^{l_j^{(s)}} $$ for $s=1,2,3$, then
 $$\mathbf m_1 (p)\mathbf m_2 (p) = \mathbf m_3 (p)$$
 if and only if $d_i^{(3)}= d_i^{(1)} + d_i^{(2)}$ for all $i\in \N$ and $k_j^{(3)}= \text{max} \{ k_j^{(1)}, k_j^{(1)}+k_j^{(2)}-l_j^{(1)} \}$, and
 $l_j^{(3)}= \text{max} \{ l_j^{(2)}, l_j^{(2)}+l_j^{(1)}-k_j^{(2)} \} $ for each $j\in \{ 1,\dots , k(p) \}$.
 Associativity, as well as the formula~\eqref{eq:reg-for-monomials}, are easily checked.
  \end{proof}

Using this whole data, we define the desired semigroup $S$. First, we fix the set:

\begin{definition}\label{Def:InverseSemigroup-Set}
{\rm Let $(E,C)$ be an adaptable separated graph. Then, we define $S$ to be the union of $\{ 0 \}$
and the set of all triples $(\gamma , \mathbf m (p) ,\eta)$, where $\gamma, \eta $ are 
c-paths (Definition~\ref{def:finitepath}), $\mathbf m (p)$ is a monomial at some prime $p\in I$
(Definition~\ref{def:monomial}), and $r(\gamma ) = s(\mathbf m (p))$, $r(\eta ) = r(\mathbf m
(p))$. So, $S$ consists of combinations of c-paths and monomials $\mathbf m$. We shall use a less
formal notation, writing the elements of $S$ just as concatenations $\gamma \mathbf m (p) \eta^*$
of a c-path, a monomial and the star of a c-path. Note that c-paths do not involve the variables
$t_i^v$, whilst monomials may involve these variables.}
\end{definition}


Our next goal is to show that $S$ is a $*$-semigroup, obeying the multiplication rules stated
in~{\bf\ref{pt:KeyDefs}}. It is important to highlight here that {\it only the purely
multiplicative relations} in~{\bf\ref{pt:KeyDefs}} are used to define the product of $S$.
Explicitly, this means that relations
{\bf\ref{pt:KeyDefs}}(ii)(d)~and~{\bf\ref{pt:KeyDefs}}(1)(ii) are not used to define it. In order
to define its product, let us firstly define what we call {\bf the translation part}
$\phi_\eta(\mathbf m(p))$ for $\eta$ a c-path from $v\in E^0_{p}$ to $w\in E^0_{p'}$, with $p>p'$,
and $\mathbf m(p)$ a monomial at $p$. This translation part is a monomial involving only the
variables $(t_i^{r(\eta)})^{\pm 1}$, designed to satisfy the relation
\begin{equation}
\label{eq:commuting-rels}
 \mathbf m (p) \eta = \tilde{\eta} \phi_\eta(\mathbf m(p)).
\end{equation}
for a suitable c-path $\tilde{\eta}$, in case the product $\mathbf m (p) \eta$ is nonzero.


Assume first that $p$ is a free prime.  By definition, the  monomial $\mathbf m (p)$ is of the form
$$\mathbf m(p)=(t^{v^{p}}_{i_1})^{d_1}\ldots (t^{v^{p}}_{i_r})^{d_r}\prod^{k(p)}_{j=1}\alpha(p,j)^{k_j}(\alpha(p,j)^*)^{l_j}$$
and the c-path to $w\in E^0_{p'}$ has standard form
$$\eta=\widehat\gamma_{v^{p},v_1}\widehat\gamma_{v_1,v_2}\ldots\widehat\gamma_{v_{n},w}=\alpha(p,i)^m\beta(p,i,t)\widehat\gamma_{v_1,v_2}\ldots\widehat\gamma_{v_{n},w}\text{
with }r(\beta(p,i,t))=v_1.$$ We write $\eta=\eta_1\eta_2$, where
$\eta_1=\alpha(p,i)^m\beta(p,i,t)$ and $\eta_2=\widehat\gamma_{v_1,v_2}\ldots\widehat\gamma_{v_{n},w}$.

Thus, applying the commutation rules~{\bf\ref{pt:KeyDefs}}(1)(iii), we get
\begin{align*}
\mathbf m(p)\eta & =(t^{v^{p}}_{i_1})^{d_1}\ldots (t^{v^{p}}_{i_r})^{d_r}\prod_{j=1}^{k(p)} \alpha(p,j)^{k_j}(\alpha(p,j)^*)^{l_j}\alpha(p,i)^m\beta(p,i,t)\eta_2 \\
& = (t^{v^{p}}_{i_1})^{d_1}\ldots (t^{v^{p}}_{i_r})^{d_r}\Big( \prod_{j\ne i} \alpha(p,j)^{k_j}(\alpha(p,j)^*)^{l_j}\Big) \underbrace{\alpha(p,i)^{k_i}(\alpha(p,i)^*)^{l_i}\alpha(p,i)^{m}\beta(p,i,t)}_{(*)}\eta_2.
\end{align*}

Looking carefully at $(*)$, we just have two possibilities:
\begin{enumerate}
\item if $l_i>m$, then $(*)=0$ by~{\bf\ref{pt:KeyDefs}}(1)(i),(v); and
\item if $l_i\leq m$, then $(*)=\alpha(p,i)^{k_i+m-l_i}\beta(p,i,t)$.
\end{enumerate}
Hence, in the latter case (the other is zero), one has that $$\mathbf m(p)\eta=\alpha(p,i)^{k_i+m-l_i}\beta(p,i,t)\eta_2\mathbf \phi_\eta(\mathbf m(p)),$$
where $\phi_\eta(\mathbf m(p))$ is a monomial in $(t_i^w)^{\pm 1}$ that comes from:
\begin{itemize}
\item passing $(t^{v^{p}}_{i_1})^{d_1}\ldots (t^{v^{p}}_{i_r})^{d_r}$ to the right through
    $\eta$ using repeatedly the rules in~{\bf\ref{pt:KeyDefs}}(2), and
\item passing the product $\prod _{j\ne i}\alpha(p,j)^{k_j}(\alpha(p,j)^*)^{l_j}$ to the right
    through $\eta$  using again repeatedly the rules in~{\bf\ref{pt:KeyDefs}}(2). (Note
    that~{\bf\ref{pt:KeyDefs}}(2)(iv) is used here in an essential way.)
\end{itemize}

We have thus shown that the relations~{\bf\ref{pt:KeyDefs}} force us to {\it define} the product
$\mathbf m (p) \eta $ in the following way:

\begin{definition}
 \label{lem:def-phi-free case}
{\rm  Assume that $p\in I$ is free, and adopt the above notation for $\mathbf m (p)$ and $\eta $.  Then we define the product $\mathbf m (p) \eta $ to be nonzero if and only if
 $v^{p} = s (\eta )$ and $l_i \leq m$. In this case we set
$$\mathbf m (p) \eta = \tilde{\eta} \phi_\eta(\mathbf m(p)).$$
 where $\tilde{\eta} = \alpha (p,i)^{k_i+m-l_i} \beta (p,i,t) \eta_2$ and $\phi_\eta(\mathbf m(p))$ is a monomial involving only the variables $ (t^{r(\eta)}_s)^{\pm 1}$, $s\in \N$, as described above.}
\end{definition}

We now consider the case where $p$ is a regular prime.

\begin{definition}
 \label{lem:def-phi-reg-case}
{\rm  Assume that $p\in I$ is regular, and let
 $$  \mathbf m(p)=(t^{v}_{i_1})^{d_1}\ldots (t^{v}_{i_r})^{d_r}\gamma\nu^*$$
 be a monomial at $p$, with $s(\gamma)=v$, $s(\nu)= v'$ and $v,v'\in E^0_p$. Let $\eta$ be a c-path from $v'$ to $w$ with standard form
$$\eta=\widehat\gamma_{v',v_1}\widehat\gamma_{v_1,v_2}\ldots\widehat\gamma_{v_{n},w}=\gamma'\beta \widehat\gamma_{v_1,v_2}\ldots\widehat\gamma_{v_{n},w}, $$
where $\gamma'$ is a path in the graph $E_p$ connecting $v'$ and $v''$, and $\beta$ is a connector with $s(\beta) =v''\text{ and }r(\beta)=v_1$.

The product $\mathbf m(p)\eta$ is nonzero if and only if $\gamma ' = \nu \gamma ''$, and  then
$$\mathbf m(p)\eta=(t^{v}_{i_1})^{d_1}\ldots (t^{v}_{i_r})^{d_r}(\gamma \gamma'')\beta\widehat\gamma_{v_1,v_2}\ldots\widehat\gamma_{v_{n},w}
= \tilde\eta \phi_\eta(\mathbf m(p)),$$ where  $\tilde\eta = \gamma
\gamma''\beta\widehat\gamma_{v_1,v_2}\ldots\widehat\gamma_{v_{n},w}$ and $\phi_\eta(\mathbf m(p))$ is a
monomial in $\{(t_i^w)^{\pm 1}\}$ that comes from passing $(t^{v''}_{i_1})^{d_1}\ldots
(t^{v''}_{i_r})^{d_r}$ to the right through $\beta\widehat\gamma_{v_1,v_2}\ldots\widehat\gamma_{v_{n},w}$
by repeated use of the rules in~{\bf\ref{pt:KeyDefs}}(2).}
\end{definition}

We have thus established the formula~\eqref{eq:commuting-rels} for any prime $p$ in $I$, whenever
the product $\mathbf m (p) \eta $ is nonzero. Note that $\phi _{\eta} (\mathbf m (p))$ only
depends on $\mathbf m (p)$ and on the connectors appearing in $\eta $. In particular, one has
$\phi_{\eta} (\mathbf m (p))= \phi_{\tilde\eta} (\mathbf m (p))$. Note also that
$\phi_{\eta}(\mathbf m (p)^*) = \phi_{\eta}(\mathbf m (p))^*$, so that $\phi_{\eta}$ preserves
adjoints.

Using the above partial definitions, we are now ready to fully define the multiplication of the elements in $S$.

\begin{definition}\label{def:multiplication1}
{\rm Let $\gamma_1\mathbf m(p)\eta_1^*$ and $\gamma_2\mathbf n(p')\eta_2^*$ be two elements in
$S$. We define
\[
\gamma_1\mathbf m(p)\eta_1^*\_\gamma_2\mathbf n (p')\eta_2^*
    =\begin{cases}
             \gamma_1[\mathbf m(p)\phi_{\eta_1'} (\mathbf n(p'))](\eta_2\tilde\eta_1')^* & \text{if $\eta_1=\gamma_2\eta_1'$  and $\mathbf n (p')^* \eta_1'\ne 0$}\strut  \\
             \gamma_1\tilde\gamma_2'[\phi_{\gamma_2'}(\mathbf m(p))\mathbf n(p')]\eta_2^* & \text{if $\gamma_2=\eta_1\gamma_2'$ and $\mathbf m (p) \gamma_2' \ne 0$}\strut  \\
             0 & \text{ otherwise}
    \end{cases}
\]}
\end{definition}

\begin{lemma}
Let $(E,C)$ be an adaptable separated graph. Then the set $S$, endowed with the above operation,  is a $*$-semigroup, and we have $s= ss^*s$ for all $s\in S$.
\end{lemma}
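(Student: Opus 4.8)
The plan is to verify that the binary operation defined in Definition~\ref{def:multiplication1} is well-defined, associative, compatible with the $*$-operation, and satisfies $s = ss^*s$. I would organise the argument around the factorisation $s = \gamma\,\mathbf m(p)\,\eta^*$ and the ``commuting relation'' \eqref{eq:commuting-rels}, treating the translation maps $\phi_\eta$ as the central computational device.

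First I would record the basic functorial properties of $\phi$. Since $\phi_\eta(\mathbf m(p))$ depends only on $\mathbf m(p)$ and the connectors in $\eta$ (as already noted after Definition~\ref{lem:def-phi-reg-case}), and since $\phi_\eta$ preserves adjoints and is multiplicative in its monomial argument (i.e.\ $\phi_\eta(\mathbf m(p)\mathbf m'(p)) = \phi_\eta(\mathbf m(p))\phi_\eta(\mathbf m'(p))$ whenever the products make sense, which follows from iterating \eqref{eq:commuting-rels}), the formula $\mathbf m(p)\eta = \tilde\eta\,\phi_\eta(\mathbf m(p))$ lets me freely slide monomials past c-paths. I would also verify the concatenation identity $\phi_{\eta\theta} = \phi_\theta\circ\phi_\eta$ on nonzero products, which is what guarantees that pushing a monomial across a composite c-path in two stages agrees with doing it in one. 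These identities are exactly the bookkeeping needed to check that the two branches of Definition~\ref{def:multiplication1} are consistent when both apply (namely when $\eta_1 = \gamma_2$, so that $\eta_1' = \gamma_2'$ is a trivial c-path).

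Next I would verify the $*$-compatibility: defining $(\gamma\,\mathbf m(p)\,\eta^*)^* := \eta\,\mathbf m(p)^*\,\gamma^*$, one checks that applying $*$ to Definition~\ref{def:multiplication1} interchanges the two nonzero branches, using that $\phi_\eta$ preserves adjoints and that $\prec$ is symmetric in the roles of $\gamma_2$ and $\eta_1$ under reversal. Then for $s = ss^*s$ I take $s = \gamma\,\mathbf m(p)\,\eta^*$, so $s^* = \eta\,\mathbf m(p)^*\,\gamma^*$; computing $s^*s$ uses the first branch with a trivial overlap and yields $\eta\,[\mathbf m(p)^*\mathbf m(p)]\,\eta^*$, and multiplying on the left by $s$ and invoking the monomial identity $\mathbf m(p)\mathbf m(p)^*\mathbf m(p) = \mathbf m(p)$ from \eqref{eq:reg-for-monomials} of Lemma~\ref{lem:mult-of-monomials} collapses the expression back to $s$. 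This is the step where the carefully arranged relations in Block~2, distilled into Lemma~\ref{lem:mult-of-monomials}, pay off.

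The main obstacle will be associativity. Here I would fix three elements and perform a case analysis according to which of the prefix conditions ($\eta_1 = \gamma_2\eta_1'$ versus $\gamma_2 = \eta_1\gamma_2'$, and similarly for the second pair) hold; because $\prec$ is a partial order on c-paths with unique standard forms (Definition~\ref{def:finitepath}), comparability forces one c-path to be a prefix of the other, which keeps the number of cases finite. In each case both $(st)u$ and $s(tu)$ reduce, after sliding all monomials to a common position via repeated use of \eqref{eq:commuting-rels} and the cocycle identity $\phi_{\eta\theta} = \phi_\theta\circ\phi_\eta$, to a single normal form $\gamma_1\tilde\gamma\,[\,\cdots\,]\,\tilde\eta^*\eta_2^*$; the agreement of the two reductions is precisely what the functoriality of $\phi$ delivers. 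The subtle point to watch is the interaction of the nonvanishing conditions (such as $l_i \le m$ for free primes and $\gamma' = \nu\gamma''$ for regular primes), and I would check that a product vanishes on one side of associativity exactly when it vanishes on the other, so that the ``otherwise $= 0$'' clause is respected consistently.
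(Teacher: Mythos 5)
Your proposal follows essentially the same route as the paper: the paper likewise treats associativity as the only delicate point, handles it by a case analysis on which prefix conditions hold, and resolves the key identification via exactly the cocycle identity you isolate (in the paper it appears inline as $\phi_{\gamma_3'}\phi_{\tilde\eta_1'}=\phi_{\gamma_3''}$ for $\gamma_3''=\tilde\eta_1'\gamma_3'$, together with multiplicativity of $\phi$ and the matching of the nonvanishing conditions), while $s=ss^*s$ is deduced, as you do, from $\mathbf m(p)\mathbf m(p)^*\mathbf m(p)=\mathbf m(p)$ in Lemma~\ref{lem:mult-of-monomials} and the form $s=\gamma\,\mathbf m(p)\,\eta^*$. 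Your plan is sound; the only difference is organisational, in that you state the functorial properties of $\phi$ up front as standalone facts rather than verifying them in the course of the single case worked out in detail.
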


\begin{proof} The only delicate point is the associativity. Assume that we have three elements $s_i = \gamma_i \mathbf m (p_i) \eta_i^*$. Write $A= (s_1s_2) s_3$ and $B= s_1(s_2s_3)$.
We will show that if $A\ne 0$ then $B\ne 0$ and $A=B$. The result then follows from symmetry.

So assume that $A\ne 0$. We will suppose that $p_1,p_2,p_3$ are free primes, leaving the easier
case where one of them is regular to the reader. There are four cases to consider, all similar. So
we will only provide the details in one case. Recall that associativity of the semigroup of
monomials at a given prime has been established in Lemma~\ref{lem:mult-of-monomials}.

Write  $$\mathbf m (p_s) = \prod _i (t^{v^{p_s}}_{i})^{d_i^{(s)}}
\prod^{k(p_s)}_{j=1}\alpha(p_s,j)^{k_j^{(s)}}(\alpha(p_s,j)^*)^{l_j^{(s)}} $$ for $s=1,2,3$. We
suppose that $\eta _1 = \gamma_2 \eta _1'$ and that $k_{i_0}^{(2)} \le m_1$, where
\[
\eta_1' = \alpha (p_2,i_0)^{m_1} \beta (p_2,i_0,t_0)\cdots.
\]
By Definition~\ref{def:multiplication1}, we then have that $s_1s_2$ is nonzero and
$$s_1s_2 = \gamma_1 [ \mathbf m (p_1) \phi_{\eta_1'} (\mathbf m (p_2))] (\eta_2\tilde\eta_1')^*,$$
where $\tilde\eta_1'= \alpha (p_2,i_0)^{m_1+l_{i_0}^{(2)} - k_{i_0}^{(2)}}\beta (p_2,i_0,t_0)
\cdots $ (see Definition~\ref{lem:def-phi-free case}). Now assume that $\gamma _3 = (\eta_2
\tilde\eta_1')\gamma_3'$, and that $l_{i_1}^{(1)} \leq m_2 $, where  $\gamma_3' = \alpha
(p_1,i_1)^{m_2}\beta (p_1,i_1, t_1) \cdots $. Then the product $A= (s_1s_2)s_3$ is nonzero and
$$A= (\gamma_1\tilde\gamma_3')\Big[ \phi _{\gamma_3'} \Big( \mathbf m (p_1) \phi _{\eta_1'}(\mathbf m (p_2))\Big) \mathbf m (p_3) \Big] \eta_3^*.$$
Let us check that $s_2s_3 \ne 0$. First note that $\gamma _3 = \eta_2 \gamma_3''$, where $\gamma_3''= \tilde\eta_1' \gamma_3 '$.
Therefore, we have
$$\gamma_3 ''= \tilde\eta _1' \gamma_3'= \alpha (p_2,i_0)^{m_1+l_{i_0}^{(2)} - k_{i_0}^{(2)}}\beta (p_2,i_0,t_0) \cdots .$$
It follows that $s_2s_3 \ne 0$ if and only if $l_{i_0}^{(2)} \le m_1+l_{i_0}^{(2)} -k_{i_0}^{(2)}$, which is equivalent to our assumption $m_1\ge k_{i_0}^{(2)}$.
Hence $s_2s_3 $ is nonzero, and
$$s_2s_3 = (\gamma_2 \tilde\gamma_3'')[\phi _{\gamma_3''} (\mathbf m (p_2)) \mathbf m (p_3)] \eta_3^* ,$$
where $\tilde\gamma _3'' = \alpha (p_2,i_0)^{m_1+l_{i_0}^{(2)} - k_{i_0}^{(2)}+k_{i_0}^{(2)} -l_{i_0}^{(2)}}\beta (p_2,i_0,t_0)\cdots = \eta_1' \gamma_3'$. Now we have
$$\gamma_2 \tilde\gamma_3'' = (\gamma _2 \eta_1') \gamma_3' = \eta_1 \gamma_3 ',$$
and so the product $s_1(s_2s_3)$ is nonzero if and only if $m_2\ge l_{i_1}^{(1)}$, which holds by our hypothesis. So we get
$$B= s_1(s_2 s_3) = (\gamma _1\tilde\gamma_3') [ \phi_{\gamma_3'} (\mathbf m (p_1)) \phi_{ \gamma_3''}(\mathbf m (p_2)) \mathbf m (p_3)] \eta_3^* . $$
Hence,  the equality $A=B$ follows from the computation
$$ \phi _{\gamma_3'}\Big( \mathbf m (p_1) \phi_{\eta_1'} (\mathbf m (p_2))\Big) =  \phi_{\gamma_3'} (\mathbf m (p_1)) \phi_{\gamma_3'} \phi_{\tilde\eta_1'}(\mathbf m (p_2)) =
\phi_{\gamma_3'} (\mathbf m (p_1)) \phi_{\gamma_3''} (\mathbf m (p_2)).$$

The identity $s= ss^*s$ for $s\in S$ follows immediately from Lemma \ref{lem:mult-of-monomials} and the general form $s=\gamma \mathbf m (p) \eta^*$ of the elements of $S$.

This concludes the proof of the result.
\end{proof}

We have the following natural characterization of the $*$-semigoup $S$:

\begin{proposition}
 \label{prop:iso-with-universal}Let $(E,C)$ be an adaptable separated graph. Then, there is a natural $*$-isomorphism $S(E,C) \cong S$.

 \end{proposition}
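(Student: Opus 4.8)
The plan is to establish the isomorphism $S(E,C) \cong S$ by exhibiting a surjective $*$-homomorphism from the abstract semigroup $S(E,C)$ (given by generators and relations) onto the concrete semigroup $S$ (built from c-paths and monomials), and then to show that this map is also injective. Since $S(E,C)$ is defined by a universal property, the natural strategy is to first check that the concrete generators inside $S$ satisfy all the defining relations of $S(E,C)$, which by the universal property yields a $*$-homomorphism $\Phi \colon S(E,C) \to S$. Concretely, I would send each vertex $v$ to the trivial triple $(v, v, v)$, each edge $e$ to $(e, r(e), r(e))$ (viewing $e$ as the appropriate one-step c-path or monomial factor), each $e^*$ to its adjoint, and each $t_i^v$ to the length-zero c-path carrying the monomial $t_i^v$. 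One then verifies relation by relation that these assignments respect all relations in Block 1 and Block 2 of~{\bf\ref{pt:KeyDefs}}; the multiplicative relations hold essentially by construction of the product in Definition~\ref{def:multiplication1}, since that product was \emph{designed} to encode exactly the commutation rules~{\bf\ref{pt:KeyDefs}}(2), while the idempotent and range relations follow from Lemma~\ref{lem:mult-of-monomials} together with the step/connector structure of Definition~\ref{def:finitepath}.

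For surjectivity, I would argue that every element of $S$ of the form $\gamma \mathbf{m}(p)\eta^*$ lies in the image of $\Phi$: the c-paths $\gamma, \eta$ are concatenations of steps, and each step is a product of generators $\alpha(p,i)$, $\beta(p,i,t)$ (or a path in $E_p$ followed by a connector), while each monomial $\mathbf{m}(p)$ is by Definition~\ref{def:monomial} a product of the generators $t_i^v$, $\alpha(p,j)$ and $\alpha(p,j)^*$. Thus $\gamma\mathbf{m}(p)\eta^*$ is a word in the generators, and $\Phi$ applied to the corresponding word in $S(E,C)$ recovers it. So $\Phi$ is onto.

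The main obstacle is injectivity, equivalently showing that $\Phi$ does not collapse distinct elements and that $S(E,C)$ has no more elements than $S$. The cleanest route is to prove that every element of $S(E,C)$ can be rewritten, using only the relations of $S(E,C)$, into a normal form matching a triple $\gamma\mathbf{m}(p)\eta^*$, and that two such normal forms are equal in $S(E,C)$ only if they are equal as triples in $S$. The rewriting step amounts to showing that any word in the generators can be pushed into the shape ``c-path times monomial times star-of-c-path'': one uses~{\bf\ref{pt:KeyDefs}}(i)(a)--(b) and (ii) to cancel adjacent $e^*e$ pairs and absorb vertices, the commutation relations~{\bf\ref{pt:KeyDefs}}(2)(ii)--(v) to move all $t_i^v$ variables and the $\alpha$-monomial factors to the appropriate central position (this is exactly the content of the translation maps $\phi_\eta$ defined above and the standard-form factorisation of c-paths from Definition~\ref{def:finitepath}), and Lemma~\ref{lem:mult-of-monomials} to reduce the central monomial. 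The delicate part is verifying that the normal form is genuinely canonical, i.e.\ that the relations impose no further identifications among the triples; this is precisely what the concrete model $S$ guarantees, since the well-definedness and associativity of the product on $S$ (already established) show that the triples form a consistent $*$-semigroup realising all the relations, so the surjection $\Phi$ cannot identify distinct triples. I expect the bulk of the work to be in the reduction-to-normal-form argument, and this is where careful bookkeeping of the exponents $k_j, l_j$ and the indices produced by $\sigma^p$ and $\sigma_j^p$ is unavoidable.
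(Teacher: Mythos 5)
Your proposal is correct and follows essentially the same route as the paper: the paper also obtains the map $S(E,C)\to S$ from the universal property and establishes bijectivity via the ``obvious'' inverse $S\to S(E,C)$ sending a triple $\gamma\mathbf m(p)\eta^*$ to the corresponding word in the generators, which is exactly the content of your surjectivity and normal-form arguments. The only substantive verification in either version is that the relations of~{\bf\ref{pt:KeyDefs}} reproduce the product of Definition~\ref{def:multiplication1} on normal forms, and you correctly identify this (together with the exponent bookkeeping via $\phi_\eta$) as the real work.
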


\begin{proof}
 It is easy to see that there is a $*$-homomorphism $\varphi \colon S(E,C)\to S$ sending the generators of $S(E,C)$ to their canonical images in $S$. The obvious map $\psi : S\to S(E,C)$ is easily seen to be a $*$-homomorphism,
 which is clearly the inverse of $\varphi$.
\end{proof}

\subsection{Idempotents of \texorpdfstring{$S(E,C)$}{S(E,C)}}
Another crucial point of the given characterization of $S(E,C)$ is the easy description of the
idempotent elements in $S(E,C)$.

\begin{definition}
[Set of idempotents] \label{def:idempotents} {\rm We denote the set
of idempotents in $S(E,C)$ by $\mathcal E$. By the rules of
multiplication just defined, we easily deduce that the idempotents
are the elements of the form:
$$\gamma\mathbf m (p)\gamma^*,$$
 where $\gamma$ is a c-path to $v\in E^0_{p}$ and the monomial $\mathbf m(p)$ is either  equal to the product $\prod_{j=1}^{k(p)} \alpha(p,j)^{l_j}(\alpha(p,j)^*)^{l_j}$, when $p$ is free,
 or $\mathbf{\lambda\lambda^*}$, for a path of finite length $\lambda$ in the graph $E_p$, with $s(\lambda)=v$, when $p$ is regular. Hence, an idempotent never contains the variables $t_i^{v}$.}
\end{definition}

\begin{remark}\label{rmk:Projections} {\rm
\begin{enumerate}
  \item Let $\gamma_1\mathbf m(p)\gamma^*_1$ and $\gamma_2\mathbf n (p')\gamma_2^*$ be two idempotents in $S(E,C)$. Notice that $\mathbf m(p)\mathbf n (p') = 0$ if $p\neq p'$. Moreover, if $p=p'$ then $\mathbf m(p)\mathbf n(p')=
  \mathbf n(p')\mathbf m(p)$, i.e. idempotent monomials commute.
\item Notice that if $e=\gamma\mathbf m(p)\gamma^*$ is an idempotent, then $\phi_\eta(\mathbf
    m(p))$ is trivial for any compatible c-path $\eta$, and moreover $\tilde\eta  = \eta $ (see
    Definitions \ref{lem:def-phi-free case}~and~\ref{lem:def-phi-reg-case}). In particular, when
    we multiply $e\_\gamma_1\mathbf n(p')\gamma_2^*$, with $\gamma_1=\gamma\eta$ and $\eta$ a
    non-trivial c-path, we have the following:
\begin{enumerate}
\item If $p$ is {\bf free} and $\mathbf m (p) \eta \ne 0$, then
$$e\cdot \gamma_1\mathbf n(p')\gamma_2^*=\gamma\mathbf m(p)\gamma^*\gamma\eta\mathbf n(p')\gamma_2^* = \gamma \eta \mathbf n (p') \gamma_2^* = \gamma_1 \mathbf n (p') \gamma_2^*.$$
Hence, it follows that either $e \cdot \gamma_1\mathbf n (p')\gamma_2^*= \gamma_1\mathbf n (p')\gamma_2^*$ or $e\cdot \gamma_1\mathbf n(p')\gamma_2^*=0$.
\item If $p$ is {\bf regular}, when computing $e\_\gamma_1\mathbf n(p')\gamma_2^*$, one has to look at the corresponding paths of finite length inside $E_p$. In this situation, depending on them, the product might
be either zero or $\gamma_1\mathbf n(p')\gamma_2^*$, as before.
\end{enumerate}
\end{enumerate}}
\end{remark}

Gathering all the tools described, we finally conclude that $S$ is an inverse semigroup.

\begin{proposition}
 Let $(E,C)$ be an adaptable separated graph. Then, the semigroup $S(E,C)$ is an inverse semigroup.
\end{proposition}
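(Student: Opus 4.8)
The plan is to verify the two defining conditions of an inverse semigroup: existence and uniqueness of an inverse $s^*$ satisfying $s = ss^*s$ and $s^* = s^*ss^*$. We already know from the preceding lemma that $S = S(E,C)$ is a $*$-semigroup with $s = ss^*s$ for all $s$, and that $s^* = s^*ss^*$ follows by applying the $*$-operation to $s = ss^*s$. So existence of an inverse is in hand, and the real content is \emph{uniqueness}. By a standard result in inverse semigroup theory (cf.\ \cite{Lawson}), a regular $*$-semigroup in which all idempotents commute is automatically an inverse semigroup, because uniqueness of inverses is equivalent to commutativity of idempotents. Hence the cleanest route is: first show that $S$ is regular (done), and then show that the idempotents of $S$ commute.

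First I would invoke Definition~\ref{def:idempotents}, which gives the explicit form $\gamma \mathbf{m}(p)\gamma^*$ of every idempotent, together with Remark~\ref{rmk:Projections}(1), which records precisely the commutation facts needed. So the core step is to show that any two idempotents $e_1 = \gamma_1 \mathbf{m}(p)\gamma_1^*$ and $e_2 = \gamma_2 \mathbf{n}(p')\gamma_2^*$ satisfy $e_1 e_2 = e_2 e_1$. The natural case analysis is on the $\prec$-relation between the c-paths $\gamma_1$ and $\gamma_2$. If neither $\gamma_1 \prec \gamma_2$ nor $\gamma_2 \prec \gamma_1$, then the c-paths are incompatible and both products vanish by the multiplication rule in Definition~\ref{def:multiplication1}, giving $e_1 e_2 = 0 = e_2 e_1$. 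If, say, $\gamma_1 \prec \gamma_2$, write $\gamma_2 = \gamma_1 \eta$; then using Remark~\ref{rmk:Projections}(2) the product $e_1 e_2$ reduces to checking whether $\mathbf{m}(p)$ absorbs the initial step of $\eta$ (when $\eta$ is nontrivial) or whether $\mathbf{m}(p)$ and $\mathbf{n}(p')$ commute as idempotent monomials (when $\gamma_1 = \gamma_2$, forcing $p = p'$), the latter being exactly Remark~\ref{rmk:Projections}(1). In each subcase one checks that the same outcome is produced by $e_2 e_1$, so the two products agree.

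The one genuinely substantive point is the equal-c-path case $\gamma_1 = \gamma_2$ (so $p = p'$): here commutativity of $e_1 e_2$ and $e_2 e_1$ comes down to commutativity of the two idempotent monomials $\mathbf{m}(p)$ and $\mathbf{n}(p)$ at a common prime $p$. For a free prime the idempotent monomials have the form $\prod_j \alpha(p,j)^{l_j}(\alpha(p,j)^*)^{l_j}$, and commutativity is immediate from the multiplication formula established in Lemma~\ref{lem:mult-of-monomials} (the $k_j, l_j$ bookkeeping is symmetric in this idempotent situation). For a regular prime, the idempotent monomials are of the form $\lambda\lambda^*$ for finite paths $\lambda$ in $E_p$, and their commutativity is the standard fact that range projections in a Leavitt path algebra commute, which is exactly the identification made in the proof of Lemma~\ref{lem:mult-of-monomials} via \cite[Corollary 1.5.12]{AAS}. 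These observations are precisely what Remark~\ref{rmk:Projections}(1) asserts, so I would cite it rather than reprove it.

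I expect the main obstacle to be purely organizational rather than deep: keeping the case analysis on $\{\gamma_1 \prec \gamma_2,\ \gamma_2 \prec \gamma_1,\ \text{incomparable}\}$ clean, and being careful that in the ``strictly longer'' case the products $e_1 e_2$ and $e_2 e_1$ really do collapse to the same element — here one must note that multiplying an idempotent by a longer element either returns that element or gives $0$ (Remark~\ref{rmk:Projections}(2)(a),(b)), and that the vanishing conditions are symmetric. Once idempotent commutativity is established, the conclusion is immediate: a regular $*$-semigroup with commuting idempotents is an inverse semigroup, so $S(E,C) \cong S$ is an inverse semigroup by Proposition~\ref{prop:iso-with-universal}.
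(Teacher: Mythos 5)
Your proposal is correct and follows essentially the same route as the paper: both reduce the statement to commutativity of idempotents via the standard criterion for regular semigroups (the paper cites \cite[Theorem 1.1.3]{Lawson}), and both handle it by the same case analysis on whether the c-paths $\gamma_1,\gamma_2$ are equal, nested, or incomparable, invoking Remark~\ref{rmk:Projections} for the nested case and commutation of idempotent monomials for the equal case.
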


\begin{proof}
 By \cite[Theorem 1.1.3]{Lawson}, it is enough to show that given any two elements $e,f\in \mathcal E$, then one has that $ef=fe$.
 To this end, we write $e,f$ in standard form, i.e. $e=\gamma_1\mathbf m(p)\gamma_1^*$ and $f=\gamma_2\mathbf n(p')\gamma_2^*$, where $\gamma_1$ is a c-path to $p$ and $\gamma_2$ is a c-path to $p'$.

 Assume that $ef\ne 0$. Then either $\gamma _1= \gamma _2$ or $\gamma _2 = \gamma _1 \eta $ for a non-trivial c-path $\eta$ and the product $\mathbf m (p)\eta $ is nonzero, or
 $\gamma _1 = \gamma_2 \eta$ for a non-trivial c-path $\eta$ and $\mathbf n (p')^*\eta = \mathbf n (p')\eta $ is nonzero. In the first case we obtain $p=p'$ and the result follows from the commutation of
 idempotent monomials.
 We shall see that in the second case we have $ef=f=fe$. By symmetry we will have $ef=e=fe$ in the third case.

 So assume that $\gamma_2 = \gamma _1 \eta$ for a non-trivial c-path $\eta$ and that $\mathbf m (p) \eta \ne 0$. Then, using the computation in Remark~\ref{rmk:Projections} we get
 $$ef = \gamma_1 \mathbf m (p) \gamma_1^* \gamma _2 \mathbf n (p') \gamma_2^* = \gamma_1 \eta  \mathbf n (p') \gamma_2 ^* = f.$$
 Similarly (or taking stars in the above), one gets $fe= f$.
  \end{proof}

Knowing that $S(E,C)$ is an inverse semigroup, it is natural to describe the natural order in
$\mathcal E$, the set of projections. This will play an important role in the sequel. The
following describes the order induced on $\mathcal E$. We omit the proof since it follows from the
same techniques appearing in later proofs.

\begin{lemma}\label{lem:projections1}
 Let $e,f\in\mathcal E$ described as $e=\gamma_1\mathbf m(p)\gamma_1^*$, $f=\gamma_2\mathbf n (p')\gamma_2^*$. Then $ef=0$ except in the following cases:
 \begin{enumerate}
  \item $\gamma _2 = \gamma _1 \eta $ for some {\bf non-trivial} c-path $\eta$ such that
      $\mathbf m (p) \eta \ne 0$. In this case, we have $f \leq e$.
  \item $\gamma_1= \gamma_2 \eta $ for some {\bf non-trivial} c-path $\eta$ such that $\mathbf n
      (p') \eta \ne 0$. In this case, we have $e\leq f$.
  \item $\gamma_1 = \gamma_2$. In this case we have $p=p'$, and
  \begin{enumerate}[\rm (a)]
  \item if $p$ is free, then $ef= \gamma_1 \mathbf m' (p) \gamma_1^*$, where, for each $1\le i \le k(p)$, the exponent of $\alpha (p,i)$ in $\mathbf m'(p)$ is the
  maximum between the exponents of $\alpha(p,i)$ in $\mathbf m(p)$ and in $\mathbf n (p)$.
  \item if $p$ is regular, and $\mathbf m (p) = \lambda \lambda^*$, $\mathbf n (p)= \mu \mu ^*$, then either $\mu = \lambda \nu'$, and then $f \le e$, or $\lambda = \mu \lambda'$, and then $e\le f$.
   \end{enumerate}
 \end{enumerate}
 \end{lemma}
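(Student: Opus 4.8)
The plan is to read $ef$ off directly from the multiplication rule of Definition~\ref{def:multiplication1}, exploiting the fact recorded in Remark~\ref{rmk:Projections}(2) that an idempotent monomial has trivial translation part. Writing $e=\gamma_1\mathbf m(p)\gamma_1^*$ and $f=\gamma_2\mathbf n(p')\gamma_2^*$, the product is governed by the matching between the right leg $\gamma_1^*$ of $e$ and the left leg $\gamma_2$ of $f$. Inspecting the two branches of Definition~\ref{def:multiplication1}, a nonzero product forces one of $\gamma_1,\gamma_2$ to be an initial segment of the other; if neither extends the other, both branches fail and $ef=0$. This already produces the trichotomy (1)/(2)/(3). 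Since $S(E,C)$ is an inverse semigroup its idempotents commute, so in order to prove $f\le e$ (resp. $e\le f$) it will suffice to verify $ef=f$ (resp. $ef=e$).

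For case (1), I would suppose $\gamma_2=\gamma_1\eta$ with $\eta$ non-trivial and $\mathbf m(p)\eta\ne 0$. This is precisely the second branch of Definition~\ref{def:multiplication1} with $\gamma_2'=\eta$ and $\eta_2=\gamma_2$, so
\[
ef=\gamma_1\,\tilde\eta\,[\phi_{\eta}(\mathbf m(p))\,\mathbf n(p')]\,\gamma_2^*.
\]
Because $\mathbf m(p)$ is an idempotent monomial, Remark~\ref{rmk:Projections}(2) gives $\tilde\eta=\eta$ and $\phi_{\eta}(\mathbf m(p))$ trivial, whence $ef=\gamma_1\eta\,\mathbf n(p')\gamma_2^*=\gamma_2\mathbf n(p')\gamma_2^*=f$, so $f\le e$. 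Case (2) is the mirror image: with $\gamma_1=\gamma_2\eta$ and $\mathbf n(p')\eta\ne 0$ one lands in the first branch (using $\mathbf n(p')^*=\mathbf n(p')$), and the same trivialisation of $\phi$ and of $\tilde\eta$ collapses the product to $e$, giving $e\le f$.

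For case (3) I would take $\gamma_1=\gamma_2$. Since a c-path ends at a single vertex, and that vertex lies in a unique transitive component, this forces $r(\gamma_1)\in E_p^0\cap E_{p'}^0$, hence $p=p'$, and $ef=\gamma_1\big(\mathbf m(p)\mathbf n(p)\big)\gamma_1^*$, reducing the statement to the multiplication of idempotent monomials at $p$. If $p$ is free, then $\mathbf m(p)=\prod_j\alpha(p,j)^{l_j}(\alpha(p,j)^*)^{l_j}$ and $\mathbf n(p)=\prod_j\alpha(p,j)^{l_j'}(\alpha(p,j)^*)^{l_j'}$; substituting $k_j=l_j$, $k_j'=l_j'$ into the product formula of Lemma~\ref{lem:mult-of-monomials} yields exponent $\max\{l_j,l_j'\}$ on both $\alpha(p,j)$ and $\alpha(p,j)^*$, which is exactly the claimed $\mathbf m'(p)$. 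If $p$ is regular, then $\mathbf m(p)=\lambda\lambda^*$, $\mathbf n(p)=\mu\mu^*$, and under the identification of regular monomials with elements of the Leavitt path algebra of $E_p$ (Lemma~\ref{lem:mult-of-monomials}, via \cite[Corollary 1.5.12]{AAS}) the factor $\lambda^*\mu$ is nonzero exactly when $\lambda,\mu$ are comparable: if $\mu=\lambda\nu'$ then $\lambda\lambda^*\mu\mu^*=\mu\mu^*$, so $f\le e$, while if $\lambda=\mu\lambda'$ then $\lambda\lambda^*\mu\mu^*=\lambda\lambda^*$, so $e\le f$; otherwise $\lambda^*\mu=0$ and $ef=0$.

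The computations in cases (1) and (2) become immediate once Remark~\ref{rmk:Projections}(2) is invoked, so the only point requiring genuine care is case (3): checking that the product formula of Lemma~\ref{lem:mult-of-monomials} returns symmetric maximal exponents in the free case, and unwinding the Leavitt relations governing $\lambda^*\mu$ in the regular case. Both are short and parallel the manipulations already carried out in the associativity argument and in Remark~\ref{rmk:Projections}, so no genuinely new ingredient is needed; this is why the statement can be recorded as a lemma and its routine verification deferred to the techniques of the surrounding proofs.
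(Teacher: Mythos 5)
Your proof is correct and takes exactly the route the paper intends: the paper omits the proof of Lemma~\ref{lem:projections1}, saying only that it follows from the techniques of the surrounding proofs, and your argument --- reading the trichotomy off the two branches of Definition~\ref{def:multiplication1}, collapsing $\tilde\eta=\eta$ and $\phi_\eta$ to the identity via Remark~\ref{rmk:Projections}(2) in cases (1) and (2), and substituting equal exponents into the product formula of Lemma~\ref{lem:mult-of-monomials} (resp.\ unwinding $\lambda^*\mu$) in case (3) --- is precisely that routine verification. No gaps.
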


 \begin{remark}
\label{rem:lub-in-E}
{\rm  If $p$ is a free prime, and $e= \gamma \mathbf m(p) \gamma^*$, $f= \gamma \mathbf n (p) \gamma^*$, we can also define the join $e\vee f$ of $e$ and $f$ by the formula
 $$ e \vee f =\gamma\mathbf{\overline m}(p)\gamma^*,$$
 where, for each $1\le i \le k(p)$, the exponent of $\alpha (p,i)$ in $\mathbf{\overline m}(p)$ is the minimum amongst the exponents of $\alpha (p,i)$ in $\mathbf m(p)$ and in $\mathbf n (p)$.
It is clear that $e\vee f$ is the {\bf least upper bound} of $e$ and $f$ in $\mathcal E$.}
\end{remark}


\subsection{The semigroup \texorpdfstring{$S(E,C)$}{S(E,C)} is \texorpdfstring{$E^*$}{E*}-unitary}
Continuing our analysis of the semigroup $S(E,C)$, in this last part of the section we show that
it is $E^*$-unitary; recall that an inverse semigroup with zero $S$ is {\it $E^*$-unitary} if for
any $s\in S$, $e\in \mathcal{E}\setminus \{ 0 \}$, $e\leq s$ implies $s\in \mathcal{E}$. This will
have crucial implications later.

\begin{proposition}
\label{prop:E-star-unitary-general}
Let $(E,C)$ be an adaptable separated graph. Then the associated inverse semigroup $S(E,C)$ is $E^*$-unitary.
\end{proposition}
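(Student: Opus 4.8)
The plan is to exploit the rigid structure of the normal form $\gamma\mathbf m(p)\eta^*$ for the elements of $S(E,C)\cong S$ (Proposition~\ref{prop:iso-with-universal}) together with the description of the order on idempotents in Lemma~\ref{lem:projections1}. Recall that in any inverse semigroup with zero, for a nonzero idempotent $e$ the relation $e\le s$ is equivalent to $e=se=es$, and a short computation shows it forces $e=ses^*\le ss^*$ and $e=s^*es\le s^*s$. So I would start from a nonzero idempotent $e=\gamma_1\mathbf n(q)\gamma_1^*$ with $e\le s=\gamma\mathbf m(p)\eta^*$, and record that $e\le ss^*=\gamma(\mathbf m(p)\mathbf m(p)^*)\gamma^*$ and $e\le s^*s=\eta(\mathbf m(p)^*\mathbf m(p))\eta^*$, both of which are idempotents written in the standard form of Definition~\ref{def:idempotents}.

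Next I would apply Lemma~\ref{lem:projections1} to each of these two comparisons. Since $e\le ss^*$, only the cases of that lemma yielding $e\le ss^*$ are admissible, and each of them forces the c-path $\gamma_1$ of $e$ to have $\gamma$ as an initial segment (either $\gamma_1=\gamma\zeta_1$ for a c-path $\zeta_1$, or $\gamma_1=\gamma$). Likewise $e\le s^*s$ forces $\eta$ to be an initial segment of $\gamma_1$. The crucial observation is then purely combinatorial: two c-paths that are both initial segments of the single c-path $\gamma_1$ are comparable with respect to $\prec$, by uniqueness of the standard form in Definition~\ref{def:finitepath}, while at the same time $\gamma$ and $\eta$ both terminate in the component $E_p^0$ because $r(\gamma)=s(\mathbf m(p))$ and $r(\eta)=r(\mathbf m(p))$ lie in $E_p^0$. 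Since any nontrivial c-path issuing from a vertex of $E_p^0$ must terminate in a strictly lower component $E_q^0$ with $q<p$, the connecting c-path between $\gamma$ and $\eta$ must be trivial, whence $\gamma=\eta$.

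Finally, with $s=\gamma\mathbf m(p)\gamma^*$ in hand, I would return to the relation $e=es$. In the case $\gamma_1=\gamma$ this reads $\gamma\mathbf n(p)\mathbf m(p)\gamma^*=\gamma\mathbf n(p)\gamma^*$ with $\mathbf n(p)$ a nonzero idempotent monomial, so that $\mathbf n(p)\mathbf m(p)=\mathbf n(p)$; using the explicit multiplication of monomials from Lemma~\ref{lem:mult-of-monomials} (the additivity of the $t$-exponents and the ``max'' rule on the $\alpha$-exponents in the free case, and the path cancellation in the regular case) I would read off that $\mathbf m(p)$ carries no $t$-variables and has balanced exponents $k_j=l_j$ (respectively $\lambda=\nu$), i.e.\ $\mathbf m(p)$ is an idempotent monomial, so $s\in\mathcal E$. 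The remaining case, where $\gamma_1$ properly extends $\gamma$, is handled by the same monomial bookkeeping after absorbing the extension, or reduced to the equality case. I expect the main obstacle to be exactly this last bookkeeping: verifying, through the free-prime multiplication rule and its translation parts $\phi$, that $\mathbf n(p)\mathbf m(p)=\mathbf n(p)$ with $\mathbf n(p)$ idempotent genuinely forces $k_j=l_j$ and kills all $t$-exponents; the regular case is comparatively routine once the Leavitt-path-algebra identification of Lemma~\ref{lem:mult-of-monomials} is invoked.
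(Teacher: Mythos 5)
Your proposal is correct, and it reaches the same endgame as the paper's proof by a somewhat different structural reduction. The paper argues directly from $e=es$ via a three-way case analysis on the relation between $\gamma_1$ and the left c-path of $s$, computing the product in each case, dispatching the case $\gamma_1\prec\gamma_2$ by contradiction and extracting $\gamma_2=\gamma_3$ from a depth count in the case $\gamma_2\prec\gamma_1$. You instead pass first to the idempotents $ss^*=\gamma(\mathbf m(p)\mathbf m(p)^*)\gamma^*$ and $s^*s=\eta(\mathbf m(p)^*\mathbf m(p))\eta^*$, use Lemma~\ref{lem:projections1} (plus antisymmetry of the order on $\mathcal E$) to see that $\gamma$ and $\eta$ are both c-path prefixes of $\gamma_1$, hence comparable by uniqueness of the standard form of Definition~\ref{def:finitepath}, and then kill the connecting c-path by noting that $r(\gamma)$ and $r(\eta)$ both lie in $E_p^0$ while a nontrivial c-path strictly descends in $I$. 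This yields $\gamma=\eta$ up front and eliminates the paper's first case for free; it is a cleaner, more conceptual derivation of the fact that $s$ has matching outer c-paths. What it does not avoid is the final monomial bookkeeping, which is essentially the paper's. One caution there: when $\gamma_1$ strictly extends $\gamma$, the relation to analyse is not literally $\mathbf n(p)\mathbf m(p)=\mathbf n(p)$ at a single prime, but rather $\tilde\zeta=\zeta$ together with $\phi_\zeta(\mathbf m(p))=1$ for the extension $\zeta$ (exactly the computation in the paper's case $\gamma_2\prec\gamma_1$, via Definition~\ref{lem:def-phi-free case}): $\tilde\zeta=\zeta$ balances the exponent of the $\alpha(p,i)$ occurring in $\zeta$, and $\phi_\zeta(\mathbf m(p))=1$ balances the remaining $\alpha(p,j)^{\pm 1}$-exponents and kills the $t$-exponents, so $\mathbf m(p)$ is indeed an idempotent monomial. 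You flag this bookkeeping as the main obstacle, and it is the only part of your plan that still needs to be written out; the plan itself is sound.
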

\begin{proof}
Fix a nonzero idempotent $e=\gamma_1\mathbf m(p)\gamma_1^*$ and an element $s=\gamma_2\mathbf
n(q)\gamma_3^*$, and suppose that $e\leq s$; that is, $e=es$. We will distinguish several cases.

\begin{itemize}
\item $(\gamma_1\prec\gamma_2)$. By assumption, we have $\gamma_2=\gamma_1\gamma$, for some
    non-trivial c-path $\gamma$; hence,
$$\gamma_1\mathbf m(p)\gamma_1^*=\gamma_1\mathbf m(p)\gamma_1^*\gamma_2\mathbf n (q)\gamma_3^*=\gamma_1\tilde\gamma [\phi_{\gamma}(\mathbf m(p))\mathbf n(q)]\gamma_3^*.$$
Since $e$ is an idempotent, this is equal to $\gamma_2\mathbf n(q)\gamma_3^*$. Therefore,
$\gamma_1=\gamma_2=\gamma_3$, contradicting $\gamma_1\prec\gamma_2$.

\item $(\gamma_1=\gamma_2)$. In this case, we have $$\gamma_1\mathbf
    m(p)\gamma_1^*=\gamma_1\mathbf m(p)\mathbf n(q)\gamma^*_3,$$ obtaining that
    $\gamma_1=\gamma_2=\gamma_3$, $p=q$ and $\mathbf m = \mathbf m \mathbf n$. We consider two
    cases:
\begin{enumerate}[\rm (i)]

\item If $p\in I$ is {\bf free}, then by the description of the monomials in this case, one obtains that $\mathbf n$ is an idempotent monomial.  Therefore, $s$ is an idempotent, as desired.

\item If $p\in I$ is {\bf regular}, then by the description of the monomials, it follows that
    $\mathbf m(p)=\alpha_1\alpha_1^*$ and $\mathbf n (q)=\alpha_2\alpha_3^*$ for some paths
    $\alpha_1,\alpha_2,\alpha_3$ in $E_p$. Hence,
    $$\alpha_1\alpha_1^*=\alpha_1\alpha_1^*\alpha_2\alpha_3^*,$$ which implies that
    $\alpha_3=\alpha_2$; hence, $s$ is an idempotent.

\end{enumerate}
\item $(\gamma_2\prec\gamma_1)$. In this case, let us write $\gamma_1=\gamma_2\eta$ for some
    non-trivial c-path $\eta$. We have $$\gamma_1\mathbf m(p)\gamma_1^*=\gamma_1\mathbf
    m(p)\eta^*\mathbf n(q)\gamma_3^*=\gamma_1[\mathbf m(p)\phi_{\eta}(\mathbf
    n(q))](\gamma_3\tilde\eta)^*.$$ We get $\gamma_1=\gamma_2\eta =\gamma_3\tilde\eta $. Since
    $\operatorname{depth}(\tilde \eta )= \operatorname{depth} (\eta )$, we have that
    $\operatorname{depth} (\gamma_3) = \operatorname{depth} (\gamma_2 )$, and consequently we
    obtain $\gamma_2=\gamma_3$ and $\tilde \eta = \eta $. Moreover, $\phi_{\eta}(\mathbf
    n(q))=1$. Now, we separate two cases:

\begin{enumerate}[\rm (i)]
\item If $q\in I$ is {\bf free}, then, writing $\eta = \alpha (p,i)^m \beta (p,i,s) \cdots $,
    one gets from $\eta = \tilde\eta$ that the exponents of $\alpha (p,i)$ and of $\alpha
    (p,i)^*$ in $\mathbf n (q)$ are equal. Now, since $\phi_{\eta}(\mathbf n(q))=1$, we obtain
    that, for $j\ne i$, also the exponents of $\alpha (p,j)$ and $\alpha (p,j)^*$ in $\mathbf
    n (q)$ are equal, and $\mathbf n (q)$ does not involve the variables $t^{v^q}_i$. Hence,
    $\mathbf n(q)$ is an idempotent monomial. Consequently, $s=\gamma_2\mathbf
    n(q)\gamma_2^*\in \mathcal E$.

\item If $q\in I$ is {\bf regular}, then $\mathbf n(q)=\alpha\mu^*$. Write $\eta = \alpha \mu'
    \beta \cdots$, where $\beta $ is the first connector appearing in the expression of
    $\eta$. Recalling that $\mathbf n (q)^* \eta = \tilde\eta \phi_{\eta} (\mathbf n (q)^*) =
    \eta $, we obtain
$$\mu \mu' \beta \cdots = (\mu \alpha^*) (\alpha \mu ' \beta \cdots ) = \alpha \mu' \beta \cdots ,$$
so that $\mu = \alpha$ and $\mathbf n(q) = \alpha \alpha^*$ is an idempotent monomial, showing
that $s$ is idempotent.\qedhere
\end{enumerate}
\end{itemize}
\end{proof}

\begin{remark}
{\rm The anonymous referee raised the interesting question of whether $S(E,C)$ is in fact
\emph{strongly} $E^*$-unitary in the sense of \cite{BFG, Lawson} (see also \cite{MS}), but we have
not been able to determine the answer.}
\end{remark}

As a consequence of Proposition~\ref{prop:E-star-unitary-general} and \cite[Proposition 6.4 and
6.2]{ExelBraz}, we obtain the following result about the Hausdorff property of the tight groupoid
$\mathcal G_{tight}(S(E,C))$ associated to the inverse semigroup $S(E,C)$ (see
Section~\ref{Sect:Algebra-Semigroup} for further details).

\begin{corollary}
Suppose that $(E,C)$ is an adapated separated graph. Then the tight groupoid $\mathcal
G_{tight}(S(E,C))$ associated to $S(E,C)$ is Hausdorff.
\end{corollary}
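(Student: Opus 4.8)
The plan is to deduce Hausdorffness directly from the $E^*$-unitarity established in Proposition~\ref{prop:E-star-unitary-general}, via Exel's general criterion for tight groupoids. First I would recall that $\mathcal G_{tight}(S(E,C))$ is realised as the groupoid of germs of the canonical action of $S(E,C)$ on the space $\widehat{\mathcal E}_{tight}$ of tight filters; since this is an ample groupoid whose unit space is the locally compact Hausdorff space $\widehat{\mathcal E}_{tight}$, the only point at issue is whether the whole groupoid is Hausdorff, equivalently whether its unit space is closed in $\mathcal G_{tight}(S(E,C))$.

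Next I would invoke \cite[Proposition 6.2]{ExelBraz}, which reduces the Hausdorff property to a combinatorial condition on $S(E,C)$: for each $s\in S(E,C)$ the set of tight filters $\xi$ at which $s$ determines a germ lying in the unit space --- that is, those $\xi$ for which there is a nonzero idempotent $e\in\xi$ with $se=e$ --- must be open, hence clopen. The content of \cite[Proposition 6.4]{ExelBraz} is precisely that $E^*$-unitarity forces this condition. Indeed, if $s\in S(E,C)$ is not idempotent, then $E^*$-unitarity says that no nonzero idempotent $e$ satisfies $e\le s$ (equivalently $se=e$), so no tight filter contributes a trivial germ and the relevant set is empty; while if $s\in\mathcal E$ the relevant set is simply the clopen domain of $s$. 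In either case the set is clopen, so the criterion is met.

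Finally I would assemble the pieces: Proposition~\ref{prop:E-star-unitary-general} supplies the hypothesis, \cite[Propositions 6.2 and 6.4]{ExelBraz} supply the implication, and together they yield that $\mathcal G_{tight}(S(E,C))$ is Hausdorff. The only care needed is to check that the standing hypotheses of Exel's results genuinely apply to $S(E,C)$, namely that it is an inverse semigroup with zero (already established) and that the germ construction is being performed with respect to the action on $\widehat{\mathcal E}_{tight}$. I do not anticipate a real obstacle: the substantive work has already been carried out in proving $E^*$-unitarity, and this corollary is essentially a translation of that structural fact through Exel's machinery.
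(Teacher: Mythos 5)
Your proposal is correct and follows exactly the paper's route: the corollary is deduced directly from Proposition~\ref{prop:E-star-unitary-general} ($E^*$-unitarity of $S(E,C)$) combined with \cite[Propositions 6.2 and 6.4]{ExelBraz}. The extra detail you supply about how Exel's criterion reduces Hausdorffness to the clopenness of the sets of trivial germs is a faithful unpacking of those cited results, not a deviation from the paper's argument.
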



\section{Filters of the inverse semigroup \texorpdfstring{$S(E,C)$}{S(E,C)}}
\label{sect:filters}

In this section we study the filters associated to the inverse semigroup $S(E,C)$, and we characterize its tight filters. They will be useful in the next section
to determine the groupoid induced by the action of the inverse semigroup on the set of tight filters.

We first recall the definition of a filter in a semilattice $\mathcal E$ of idempotents.
\begin{definition}
\label{def:filters}
A filter on $\mathcal E$ is a non-empty subset $\eta\subset \mathcal E$ such that :
 \begin{enumerate}
  \item $0\not\in\eta$.
  \item If $e\in\eta$, and $f\in\mathcal E$ satisfy $e\leq f$, then $f\in \eta$.
  \item If $e,f\in \eta$, then $ef\in \eta$.
 \end{enumerate}
We will denote the set of filters on $\mathcal E$ as $\widehat{\mathcal E_0}$.
\end{definition}

From now on, and in accordance with the notation established in
Section~\ref{sect:def-inverse-semigroupS}, we let $\mathcal E$ denote the semilattice of
idempotents of the inverse semigroup $S(E,C)$ associated to a fixed adaptable separated graph
$(E,C)$.

Recall that we can endow the set $\widehat{\mathcal E_0}$ of filters with a natural topology such that
it becomes a totally disconnected locally compact Hausdorff space. A basis of this topology is
given as follows. For finite sets $X,Y\subseteq \mathcal E$, define
\[
\mathcal U(X,Y)=\{\eta\in\widehat{\mathcal E_0}\mid X\subseteq \eta \text{ and }Y\cap\eta=\emptyset \}.
\]
Then, $\{\mathcal U(X,Y)\mid X,Y\subseteq \mathcal E \text{ are finite}\}$ is a basis for the
abovementioned topology. Note that we may assume that $|X|=1$ by replacing $X$ by the singleton consisting of its meet.

Let us provide a concrete description of any $\eta\in\widehat{\mathcal{E}_0}$ using the concrete form
of the idempotent elements. Recall that an idempotent $e\in\mathcal E$ is of the form
$\gamma\mathbf m(p)\gamma^*$ (see Definition~\ref{def:idempotents}), and that
 $\operatorname{depth}(\gamma )$ denotes the depth of the c-path $\gamma$. Also, remembering that the poset $I$ is finite,
we see that the depth of any possible c-path $\gamma$ is bounded on $\mathbb N$.

In order to understand the filters, it is convenient to introduce the following definition.

\begin{definition}
 \label{def:semfinite-path}
{\rm  Let $\gamma $ be a c-path. A {\bf semifinite path} $\mu$ starting at $\gamma $ is one of the
following:
\begin{enumerate}
 \item If $r(\gamma )= v^p$, with $p$ a free prime, then
 $$\mu = \gamma \prod_{j=1}^{k(p)} \alpha (p,j)^{k_j},$$
 where $0\le k_j \le \infty $ for all $j\in \{1,\dots , k(p) \}$. We say that $\mu$ is an {\bf infinite path} if $k_j= \infty$ for all $j\in \{ 1,\dots , k(p) \}$.
 \item If $r(\gamma ) = v$ with $v\in E_p^0$ and $p$ a regular prime, then
 $$\mu = \gamma \lambda , $$
 where $\lambda $ is either a finite or an infinite path in the graph $E_p$. We say that $\mu$ is an {\bf infinite path} if $\lambda $ is an infinite path in $E_p$.
   \end{enumerate}}
 \end{definition}

\begin{definition}
 \label{def:initial-segment}{\rm    An {\bf initial segment} of the semifinite path $\mu$ is a semifinite path $\mu'$ of the form
 $\mu' = \gamma' \lambda '$, where $\gamma '$ is a c-path  such that $\gamma = \gamma ' \gamma ''$ for some c-path $\gamma ''$, and either:
 \begin{enumerate}
 \item $\gamma = \gamma '$, and
 \begin{enumerate}[\rm (i)]
 \item if $p$ is free, then $\lambda ' = \prod_{j=1}^{k(p)} \alpha (p,j)^{l_j}$ with $l_j\in
     \Z^+$ and $l_j\le k_j$ for all $j=1,\dots , k(p)$, and
 \item if $p$ is regular, then $\lambda'$ is an initial segment of the path $\lambda$ in the
     graph $E_p$; or
 \end{enumerate}
\item  $\gamma ' \ne \gamma$, and

\begin{enumerate}[\rm (i)] \item if $\gamma '' = \alpha (q,i)^m \beta (q,i,s) \cdots$,
 where $q\in \Ifree$, then $\lambda ' = \prod_{j=1}^{k(q)} \alpha (q,j)^{t_j}$ with $t_j\in
 \Z^+$, $j=1,\dots , k(q)$, and $t_i\le m$, and
 \item  if $[r(\gamma ')]\in \Ireg$, then $\lambda'$ is an initial segment of $\gamma''$ that
     does not contain a connector.
  \end{enumerate}
  \end{enumerate}}
 \end{definition}

With the given definitions of c-paths and semifinite paths, all c-paths are semifinite. However, there
are semifinite paths of finite length which are not c-paths according to our definition.

Our goal is to show that the filters on $\mathcal E$ correspond to the collection of all
semifinite paths. To this end, we need the following definition. Let $\mu = \gamma \lambda $ be a
semifinite path. Then, for each initial segment $\mu'= \gamma' \lambda'$ of $\mu$, we will denote
the idempotent arising from $\mu'$ as:
$$e(\mu ') := \gamma' \lambda' (\lambda')^* (\gamma')^* \in \mathcal E.$$
Using this notion, we show the desired equivalence.

\begin{theorem}
 \label{thm:filters-and-semif-paths} Let $\mathcal S$ be the collection of all semifinite paths. Then there is a bijective correspondence
 $$\varphi  \colon \mathcal S \to \widehat{\mathcal E _0}$$
  such that, for $\mu \in \mathcal S$,
  $$\varphi (\mu ) = \{ e(\mu ') \mid  \mu '  \text{ is an initial segment of } \mu \}.$$
  \end{theorem}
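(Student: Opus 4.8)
The plan is to prove the statement by constructing an explicit inverse $\psi \colon \widehat{\mathcal E_0} \to \mathcal S$ to $\varphi$ and checking that both composites are the identity. First, though, I would verify that $\varphi$ is well defined, i.e. that $\varphi(\mu)$ is genuinely a filter for each semifinite path $\mu$. That $0 \notin \varphi(\mu)$ is immediate from Definition~\ref{def:idempotents}. For the remaining two axioms the key input is the order description in Lemma~\ref{lem:projections1}: the initial segments of a fixed $\mu$ (Definition~\ref{def:initial-segment}) are parametrised by how far one travels along $\mu$, which at a free vertex means a vector of exponents ordered componentwise and at a regular vertex means a prefix of the tail. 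Using Lemma~\ref{lem:projections1}(3), the product $e(\mu_1')e(\mu_2')$ of two initial-segment idempotents is again $e(\mu_3')$, where $\mu_3'$ is the initial segment obtained by taking the longer c-path base, the componentwise maximum of the free exponents, or the longer regular prefix; this gives closure under products. For upward closure, if $e(\mu') \le f$ then Lemma~\ref{lem:projections1} forces $f$ to have a c-path base that is a prefix of that of $\mu'$, with smaller free exponents or a shorter regular prefix, so that $f = e(\mu'')$ for a ``shorter'' initial segment $\mu''$ of $\mu$, whence $f \in \varphi(\mu)$.

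The heart of the argument is the construction of $\psi$. Given a filter $\eta$, I would first show that the c-path bases occurring in the elements of $\eta$ form a chain for the prefix order $\prec$: if $e = \gamma_1\mathbf m(p)\gamma_1^*$ and $f = \gamma_2\mathbf n(q)\gamma_2^*$ lie in $\eta$, then $ef \in \eta$ is nonzero, and by Lemma~\ref{lem:projections1} a nonzero product forces $\gamma_1$ and $\gamma_2$ to be $\prec$-comparable. Since each step of a c-path strictly decreases the component in the finite poset $I$, the depth of any c-path is bounded by $|I|$, so this chain has a unique longest element $\gamma_{\max}$, which will be the c-path base of $\psi(\eta)$. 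Writing $p := [r(\gamma_{\max})]$, I then read off the tail in the final component: if $p$ is free, I set $k_j := \sup\{\, l_j : \gamma_{\max}(\prod_i\alpha(p,i)^{l_i}(\alpha(p,i)^*)^{l_i})\gamma_{\max}^* \in \eta \,\} \in \Z^+ \cup \{\infty\}$ (the supremum being taken over exponent vectors $(l_i)$) and put $\psi(\eta) := \gamma_{\max}\prod_j\alpha(p,j)^{k_j}$; if $p$ is regular, the idempotents with base $\gamma_{\max}$ are $\gamma_{\max}\lambda\lambda^*\gamma_{\max}^*$ with the $\lambda$'s forming a prefix-chain (again by Lemma~\ref{lem:projections1}(3) applied to their nonzero products), whose union is a finite or infinite path $\lambda_\infty$ in $E_p$, and I put $\psi(\eta) := \gamma_{\max}\lambda_\infty$. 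One checks directly that this is a semifinite path in the sense of Definition~\ref{def:semfinite-path}.

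It then remains to verify that $\psi$ and $\varphi$ are mutually inverse. For $\varphi(\psi(\eta)) = \eta$ one uses upward closure and closure under products of $\eta$ together with the sup/union definitions: in the free case the set of exponent vectors appearing at base $\gamma_{\max}$ is downward closed (upward closure of the filter reverses the exponent order) and closed under componentwise maxima (products), hence equals the full box $\prod_j\{0,\dots,k_j\}$, which is exactly the set of free initial segments of $\psi(\eta)$; the regular case is analogous with prefixes. The composite $\psi(\varphi(\mu)) = \mu$ follows because $\gamma_{\max}$ recovers the c-path base of $\mu$ and the suprema recover its tail exponents (an infinite branch giving $k_j = \infty$); here one uses that by Definition~\ref{def:idempotents} the c-path base and the exponents (resp.\ the path) can be read off uniquely from an idempotent, so that initial segments are recovered from their idempotents. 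I expect the main obstacle to be the bookkeeping at \emph{intermediate} free branch vertices of the stem: there the filter contains ``hovering'' idempotents $\gamma'\prod_j\alpha(q,j)^{t_j}(\alpha(q,j)^*)^{t_j}(\gamma')^*$ with the exponent on the branch continuing the stem bounded by the relevant power $m$ (forced by nonzero products with deeper elements, via Lemma~\ref{lem:projections1}(1)) but with the exponents on the \emph{other} branches unbounded in $\Z^+$. Matching these precisely against the type-(2) initial segments of Definition~\ref{def:initial-segment}, and confirming that no spurious idempotents slip into $\eta$, is where the compatibility conditions ``$t_i \le m$'' must be tracked carefully through Lemma~\ref{lem:projections1}.
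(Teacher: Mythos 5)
Your proposal is correct and follows essentially the same route as the paper: the paper likewise takes the maximal-depth c-path base occurring in the filter, reads off the tail as a supremum of exponents (free case) or a prefix-chain/union (regular case), and then verifies the two inclusions, with its main effort spent exactly on the intermediate free vertices and the condition $t_i \le m$ that you flag as the delicate point. The only cosmetic difference is that you package the argument as an explicit two-sided inverse $\psi$, whereas the paper asserts injectivity of $\varphi$ and proves surjectivity by exhibiting the same preimage.
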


\begin{proof}  It is easy to see that $\varphi (\mu)$ is a filter, and that the map $\varphi$ is one-to-one.

It remains to check that $\varphi$ is surjective. Let $\eta$ be a filter on $\mathcal E$. Observe
that, since $I$ is a finite poset, there is a maximum for the depths of the c-paths appearing in
the expressions of the elements of $\eta $.  Let $e= \gamma \mathbf m (p) \gamma ^*$ be an element
in $\eta$ such that $\gamma $ has maximum depth, say $k$. We distinguish two cases. If $p$ is a
regular prime, then there exists a path (of finite or infinite length) $\lambda $ in $E_p$ such
that
$$\{ \lambda' \mid \gamma \lambda'(\lambda')^* \gamma^* \in \eta \} = \{ \lambda' \mid \lambda = \lambda' \lambda '' \};$$
that is, the set of paths $\lambda'$ of finite length in $E_p$ such that $\gamma
\lambda'(\lambda')^* \gamma^* $ belongs to $\eta$ is the set of initial segments of the path
$\lambda$. Write $\mu = \gamma \lambda$ for this path $\lambda$, and observe that $\varphi (\mu) =
\eta$. If $p$ is a free prime, then for each $1\le j \le k(p)$, we let $k_j\in \Z^+ \cup
\{\infty\}$ be the supremum of the positive integers $l_j$ such that $\eta $ contains an element
of the form $\gamma \mathbf m' (p) \gamma^*$ such that the exponent of $\alpha (p,j)$ in $\mathbf
m'(p)$ is $l_j$. Now set
$$\mu = \gamma \prod_{j=1}^{k(p)} \alpha (p,j)^{k_j},$$
which is a semifinite path.

We claim that $\varphi (\mu) = \eta$. We first check that $\varphi (\mu ) \subseteq \eta$. Write
$\mu = \gamma \lambda$ and let $\mu '= \gamma ' \lambda'$ be an initial segment of $\mu$. If
$r(\gamma ')$ is a regular prime, then it follows easily that $\gamma '
(\lambda')(\lambda')^*(\gamma ')^* \in \eta$, so that $e (\mu ') \in \eta$. Assume that $q: =
r(\gamma ' )$ is a free prime. If $\gamma ' = \gamma $, then write $\mu '= \gamma \lambda '$,
where $\lambda ' = \prod_{j=1}^{k(p)} \alpha (p, j)^{l_j} (\alpha (p,j)^*)^{l_j} $, where the
$l_j$ are non-negative integers with $l_j\le k_j$ for all $j$. By definition of $k_j$, for each
$1\le j \le k(p)$ there exists $f_j = \gamma \mathbf m _j(p)  \gamma ^* \in  \eta$ such that the
exponent of $\alpha(p,j)$ in $\mathbf m_j (p)$ is at least $l_j$. Let $f = \prod _{j=1}^{k(p)}
f_j$. Since $\eta$ is a filter, we have that $f\in \eta $, and the exponent of {\it each} $\alpha
(p,j)$ in $f$ is $\ge l_j$, for $j=1,\dots , k(p)$. Hence
$$ e(\mu')f=(\gamma \lambda '(\lambda ')^* \gamma ^*)f = f.$$
Since $\eta$ is a filter, we get that $e(\mu')=\gamma \lambda '(\lambda ')^* \gamma ^*\in \eta $, as desired. Finally, assume now that $\gamma '\ne \gamma$, and let
$$\lambda ' = \prod_{j=1}^{k(q)} \alpha (q,j)^{t_j} (\alpha (q,j)^*)^{t_j},\qquad \gamma '' =\alpha (q,i)^m \beta (q,i,s)\cdots ,$$
where $\gamma = \gamma ' \gamma ''$. Since ${t_i}\le m$, we have that $e(\mu')e= e$ (see
Lemma~\ref{lem:projections1}). Therefore $e \le e(\mu')$ and since $e\in \eta$ and $\eta $ is a
filter, we get that $e(\mu')\in \eta$. Hence we have verified that $\varphi (\mu ) \subseteq
\eta$.

Now we prove that $\eta \subseteq \varphi (\mu)$. If $f= \gamma ' \mathbf n (q) (\gamma')^*$ is
another element of $\eta$, then the condition that $ef\ne 0$, Lemma~\ref{lem:projections1}, and
the maximality of $k$, give that $\gamma = \gamma' \gamma ''$ for a c-path $\gamma ''$. If $\gamma
''$ is trivial, then $\gamma = \gamma '$, and by definition of $\mu$ we have that $\mathbf n (p) =
\lambda ' (\lambda ')^*$, where $\lambda '$ is an initial segment of $\lambda $ in the graph $E_p$
if $p$ is regular, and $\lambda ' = \prod_{j=1}^{k(p)} \alpha (p, j)^{l_j} $, where $l_j$ are
non-negative integers with $l_j\le k_j$ if $p$ is free. In either case, $\mu' = \gamma \lambda '$
is an initial segment of $\mu$ and so, that $f \in \varphi (\mu)$. Assume that $\gamma ' \ne
\gamma $. If $q$ is regular, then $f= \gamma ' \lambda'(\lambda')^*(\gamma')^*$, where $\lambda '$
is a path in $E_q$ and, since $fe\ne 0$, we get that $\lambda '$ must be an initial segment of
$\gamma ''$. If $q$ is free, write $\gamma '' = \alpha (q,i)^m \beta (q,i, s) \cdots $ and
$\mathbf n (q) = \prod_{j=1}^{k(q)} \alpha (q,j)^{t_j}(\alpha (q,j)^*)^{t_j}$. Since $fe \ne 0$ we
see from Lemma~\ref{lem:projections1} that $t_i\leq m$, and thus $\mu ':= \gamma
\prod_{j=1}^{k(q)} \alpha (q,j)^{t_j}$ is an initial segment of $\mu$. Hence
\[
f = e(\mu ')\in \varphi (\mu).\qedhere
\]
\end{proof}

 \begin{corollary}
  \label{cor:ultrafilters}
The above isomorphism $\varphi$ restricts to a bijection between the set of infinite paths and the
set $\widehat{\mathcal E}_{\infty}$ of ultrafilters.
 \end{corollary}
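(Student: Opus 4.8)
The plan is to leverage the bijection $\varphi$ from Theorem~\ref{thm:filters-and-semif-paths} together with the fact that an ultrafilter is, by definition, a maximal filter; that is, $\widehat{\mathcal E}_\infty$ is precisely the set of maximal elements of $(\widehat{\mathcal E_0},\subseteq)$. Since $\varphi$ is already known to be a bijection, it suffices to prove that, for a semifinite path $\mu$, the filter $\varphi(\mu)$ is maximal if and only if $\mu$ is an infinite path. The key translation is the order-theoretic observation that $\varphi(\mu)\subseteq\varphi(\nu)$ holds if and only if every (finite) initial segment of $\mu$ is an initial segment of $\nu$: the forward implication is immediate from the defining formula $\varphi(\mu)=\{e(\mu')\mid \mu'\text{ an initial segment of }\mu\}$, while the reverse uses that the assignment $\mu'\mapsto e(\mu')=\gamma'\lambda'(\lambda')^*(\gamma')^*$ is injective (one reads off $\gamma'$ and $\lambda'$ from the idempotent, using the description in Definition~\ref{def:idempotents}), so that $e(\mu')\in\varphi(\nu)$ forces $\mu'$ itself to be an initial segment of $\nu$.

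Granting this, the first and easier direction is to show that if $\mu$ is \emph{not} infinite then $\varphi(\mu)$ is not maximal. A non-infinite semifinite path $\mu=\gamma\lambda$ can always be properly extended to a semifinite path $\nu$: if $r(\gamma)=v^p$ with $p$ a non-minimal free prime, one increases by one the exponent of some loop $\alpha(p,j)$ occurring with finite multiplicity; if $r(\gamma)$ lies in a regular component $E_p$, one appends a further edge of $E_p$, which exists because every vertex of $E_p$ emits at least two edges by Definition~\ref{def:adaptable-sepgraphs}(2). (The only semifinite path ending at a minimal free vertex is the c-path itself, which is vacuously infinite, so that case does not arise here.) Since $\nu$ is finite, $e(\nu)\in\varphi(\nu)\setminus\varphi(\mu)$, whence $\varphi(\mu)\subsetneq\varphi(\nu)$ and $\varphi(\mu)$ is not an ultrafilter.

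For the converse I would show that an infinite path $\mu$ admits no proper extension, so that $\varphi(\mu)$ is maximal. Suppose $\varphi(\mu)\subseteq\varphi(\nu)$; by the translation above, every finite initial segment of $\mu$ is an initial segment of $\nu$. Writing $\mu=\gamma\lambda$ with final component $q=r(\gamma)$, infinitude of $\mu$ means that $\gamma$ is followed either by $\alpha(q,j)$-powers of unbounded exponent (free case) or by arbitrarily long paths in $E_q$ (regular case). By uniqueness of the c-path/tail decomposition of a semifinite path, matching such an initial segment $\gamma\lambda'$ against an initial segment of $\nu=\gamma_\nu\lambda_\nu$ forces $\gamma_\nu$ to extend $\gamma$. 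Now one inspects Definition~\ref{def:initial-segment}: either $\gamma_\nu=\gamma$ (``type (1)'', so $\lambda'$ is a truncation of $\lambda_\nu$), or $\gamma_\nu$ properly extends $\gamma$ through a connector that can only follow a loop-power $\alpha(q,i)^m$, resp.\ a finite path of $E_q$, of \emph{bounded} length (``type (2)''). As the lengths of the $\lambda'$ are unbounded, type (2) fails for all sufficiently long initial segments; hence $\gamma_\nu=\gamma$ and $\lambda_\nu$ must contain every finite truncation of $\lambda$, forcing $\lambda_\nu=\lambda$ and $\nu=\mu$. Thus $\varphi(\mu)$ is maximal, i.e.\ an ultrafilter, and $\varphi$ restricts to the asserted bijection between infinite paths and $\widehat{\mathcal E}_\infty$.

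I expect the main obstacle to be the last step, namely excluding ``type (2)'' extensions of an infinite path. This is exactly where the adaptability hypotheses enter: a connector may only follow a finite power of a single loop at a free vertex, and regular components have no sinks (out-degree at least two by Definition~\ref{def:adaptable-sepgraphs}(2)), so an infinite tail at $q$ leaves no room to branch off through a connector into a strictly lower component. Making the bookkeeping of Definition~\ref{def:initial-segment} precise in both the free and regular cases, via Lemma~\ref{lem:projections1}, is the only genuinely technical point.
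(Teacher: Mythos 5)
Your proposal is correct and follows essentially the same route as the paper, whose proof is the one-line observation that ultrafilters are precisely the maximal filters together with Theorem~\ref{thm:filters-and-semif-paths}; you have simply filled in the details of why $\varphi(\mu)$ is maximal exactly when $\mu$ is infinite (via the order-translation through initial segments, the injectivity of $\mu'\mapsto e(\mu')$, and the exclusion of type-(2) extensions by unboundedness of the tail). The argument is sound, including the careful handling of minimal free vertices as vacuously infinite paths.
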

 \begin{proof}
This follows from Theorem~\ref{thm:filters-and-semif-paths} and the fact that the ultrafilters are
precisely the maximal filters.
\end{proof}

We can now show that the ultrafilters coincide with the tight filters. Recall that, by
\cite[Theorem 12.9]{ExelBraz}, the space $\widehat{\mathcal E}_{tight}$ of tight filters coincides
with the closure of the space of ultrafilters $\widehat{\mathcal E}_{\infty} $ in the space of
filters $\widehat{\mathcal E}_0$.

\begin{theorem}
 \label{thm:tightfilters}
 The space $\widehat{\mathcal E}_{\infty}$ of ultrafilters is closed in the space $\widehat{\mathcal E}_0$ of filters. Consequently, the space of ultrafilters coincides with the space of tight
 filters.
\end{theorem}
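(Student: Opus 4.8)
The plan is to use the path description from the previous results to reduce the statement to a purely combinatorial separation argument. By Corollary~\ref{cor:ultrafilters}, $\varphi$ identifies $\widehat{\mathcal E}_{\infty}$ with the set of infinite paths, and by Theorem~\ref{thm:filters-and-semif-paths} the complement $\widehat{\mathcal E}_0\setminus \widehat{\mathcal E}_{\infty}$ is exactly $\varphi(\mathcal S_{\mathrm{fin}})$, where $\mathcal S_{\mathrm{fin}}$ denotes the set of semifinite paths that are not infinite. To prove $\widehat{\mathcal E}_{\infty}$ closed I would show that this complement is open: given a non-infinite semifinite path $\mu$, I will exhibit a basic neighbourhood $\mathcal U(\{e_0\},Y)$ of $\eta:=\varphi(\mu)$, with $e_0\in\eta$ and $Y$ a finite subset of $\mathcal E\setminus\eta$, such that $\mathcal U(\{e_0\},Y)$ contains no ultrafilter. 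Once closedness is established, the final sentence is immediate from \cite[Theorem 12.9]{ExelBraz}, which identifies $\widehat{\mathcal E}_{tight}$ with the closure of $\widehat{\mathcal E}_{\infty}$ in $\widehat{\mathcal E}_0$; a closed set equals its closure.

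The construction of $Y$ splits according to the terminal component of $\mu$. Write $\mu=\gamma\lambda$ in the notation of Definition~\ref{def:semfinite-path}, with $r(\gamma)$ in $E_p$. Suppose first that $p$ is regular, so $\mu$ non-infinite means $\lambda$ is a finite path of $E_p$; put $w=r(\lambda)$ and $e_0=e(\mu)=\gamma\lambda\lambda^*\gamma^*$, which is the minimum element of $\eta$. For each $f\in s^{-1}(w)$ set $e_f:=\gamma\lambda ff^*\lambda^*\gamma^*$; since $\lambda f$ is strictly longer than $\lambda$ and $\gamma$ has maximal depth among the c-paths occurring in $\eta$, each $e_f$ lies in $\mathcal E\setminus\eta$, and $Y:=\{e_f: f\in s^{-1}(w)\}$ is finite because $s^{-1}(w)$ is finite. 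The point is that any ultrafilter $\eta'=\varphi(\nu)$ containing $e_0$ corresponds to an infinite path $\nu$ extending $\mu$ (using injectivity of $e(\cdot)$ on underlying paths); since $\nu$ is infinite and $w$ emits edges, $\nu$ continues past $w$ by some first edge $f\in s^{-1}(w)$, and then $e_f\in\varphi(\nu)$, whether $f$ is an $E_p$-edge or a connector. Thus $\eta'\cap Y\neq\emptyset$, so $\eta'\notin\mathcal U(\{e_0\},Y)$.

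The delicate case is when $p$ is free and non-minimal, so $\mu=\gamma\prod_{j=1}^{k(p)}\alpha(p,j)^{k_j}$ with some $k_{j_0}<\infty$. Here I would take $e_0=\gamma\gamma^*$ and let $Y_{j_0}$ be the finite set consisting of the single idempotent $\gamma\alpha(p,j_0)^{k_{j_0}+1}(\alpha(p,j_0)^*)^{k_{j_0}+1}\gamma^*$ together with all escape idempotents $\gamma\alpha(p,j_0)^m\beta(p,j_0,t)\beta(p,j_0,t)^*(\alpha(p,j_0)^*)^m\gamma^*$ for $0\le m\le k_{j_0}$ and $1\le t\le g(p,j_0)$; none lies in $\eta$, since $\eta$ neither attains $j_0$-loop exponent $k_{j_0}+1$ nor contains any connector. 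The verification that no ultrafilter meets $\mathcal U(\{e_0\},Y_{j_0})$ runs through the shapes an infinite path $\nu$ extending $\gamma$ can take at $v^p$, reading off from Lemma~\ref{lem:projections1} and Definition~\ref{def:initial-segment} which idempotents lie in $\varphi(\nu)$: if $\nu$ spins all loops at $v^p$ infinitely, or escapes through a loop $i\ne j_0$, then the $\alpha(p,j_0)$-exponent is unbounded in $\varphi(\nu)$ and the first element of $Y_{j_0}$ belongs to $\varphi(\nu)$; if $\nu$ escapes through loop $j_0$ at height $m$, then either $m>k_{j_0}$, again catching that first element, or $m\le k_{j_0}$, in which case the matching escape idempotent lies in $\varphi(\nu)$. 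In every case $\varphi(\nu)\cap Y_{j_0}\neq\emptyset$.

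The main obstacle I anticipate is precisely this free case: because an infinite path can leave $v^p$ through a connector after finitely many turns around a loop, a single ``upper bound'' idempotent fails to separate $\eta$ from the ultrafilters that escape early through the distinguished loop $j_0$, and one must additionally block these finitely many low escapes — which is exactly what the extra idempotents in $Y_{j_0}$ do. The regular case is genuinely easier, since the standing hypothesis $|s_{E_p}^{-1}(w)|\ge 2$ keeps every continuation inside the controlled finite set of one-edge extensions, so the naive separating set already suffices.
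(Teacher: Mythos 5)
Your proposal is correct and follows essentially the same route as the paper: identify filters with semifinite paths, show the non-infinite ones can be separated from all ultrafilters by a basic neighbourhood $\mathcal U(X,Y)$ (with the identical choice in the regular case), and invoke \cite[Theorem 12.9]{ExelBraz} for the final claim. The only difference is cosmetic: in the free case the paper takes the positive constraint $X=\{e(\gamma\alpha(p,j_0)^{k_{j_0}})\}$ so that only the height-$k_{j_0}$ escape idempotents and the height-$(k_{j_0}{+}1)$ loop idempotent need to be excluded, whereas you take the weaker $X=\{\gamma\gamma^*\}$ and compensate by blocking escapes at every height $m\le k_{j_0}$ — both finite sets do the job.
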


\begin{proof}
 It is enough to show that the complement of $\widehat{\mathcal E}_{\infty}$ in $\widehat{\mathcal E}_0$ is open. Let $\eta = \varphi (\mu)$, where $\mu= \gamma \lambda $ is a semifinite path which is not an infinite path.
 If $w:= r(\gamma )$, with $w\in E^0_p$  and $p$ a regular prime, then $\lambda $ must be a finite path in the graph $E_p$. Let $X= \{ \mu \mu^* \}$ and $Y= \{ \mu e e^* \mu^* \mid e\in s^{-1}_{E}(r_{E}(\lambda))  \}$. Since $E$ is a row-finite
 graph, it follows that $Y$ is a finite set. Now
 $$\mathcal U (X,Y) = \{ \eta \},$$
 which implies that $\eta $ is an isolated point of $\widehat{\mathcal E}_0$ in this case.

 Now suppose that $p$ is a free prime, and that $w=v^p$. Then, $\lambda = \prod_{j=1}^{k(p)} \alpha (p,j)^{k_j} $, and, by assumptions, there exists an index $i_0$ such that $k_{i_0}$ is finite.
 Assume, for convenience, that $i_0=1$. Define semifinite paths $\mu'$ and $\delta$ by
 $$\mu ' :=  \gamma \alpha (p,1)^{k_1}\qquad\text{and}\qquad \delta : = \gamma \alpha (p,1)^{k_1+1}, $$
  which define idempotents $e(\mu') = \mu '(\mu')^*, e(\delta ) = \delta \delta ^*\in \mathcal E$.
 Let
 $$Y_1= \{ e(\mu' \beta (p,1,s)) \mid 1\le s\le g(p,1) \},$$
 and define
 $$X= \{ e (\mu ') \} \qquad \text{and} \qquad  Y = \{ e(\delta )\}\cup Y_1.$$

Then $\mathcal U (X,Y)$ is a neighborhood of $\eta$ consisting entirely of semifinite paths which
are not infinite. Indeed, if $\eta ' \in \mathcal U (X,Y)$, then either $\eta '$ is of the form
$\varphi (\mu '')$, where $\mu ''$ is a semifinite path of the form
$$\gamma \prod_{j=1}^{k(p)} \alpha (p,j)^{l_j},$$
where $l_1=k_1$, which is therefore not an infinite path,  or it is of the form $\varphi  ( \rho)$, where $\rho =\gamma \alpha (p,j)^m \beta (p,j,s) \rho '$, where  $j>1$ and $ \rho '$ is a semifinite path.
Notice that in this case,
$$e (\delta ) = \gamma \alpha (p,1)^{k_1+1}(\alpha (p,1)^*)^{k_1+1}\gamma ^* \in \varphi (\rho ) = \eta '$$
because $e (\delta ) \geq g$, where $g= e(\gamma \alpha (p,j)^m \beta (p,j,s))\in \eta '$. Hence, $e (\delta ) \in \eta '$, contradicting that $\eta '\in \mathcal U (X,Y)$.

So, this second case is not possible and we obtain that $\mathcal U (X,Y)$ is a neighborhood of
$\eta$ consisting entirely of semifinite paths which are not infinite.
 \end{proof}

We close this section by providing an important characterization of the topology associated to the
space $\widehat{\mathcal E}_{\infty}$.  To do so, we use the identification of the space of
ultrafilters $\widehat{\mathcal E}_{\infty}$ with the space of infinite paths.

\begin{definition}
 \label{def:familycalP}
{\rm  We denote by $\mathcal P$ the set of semifinite paths of the form $\mu  = \gamma \lambda $,
where $\gamma $ is a c-path, and $\lambda$ is a path of finite length in the component of a
regular prime, or
 $\lambda = \prod_{j=1}^{k(p)} \alpha (p,j)^{k_j}$ for $k_j\in \Z^+$, $1\le j \le k(p)$ for a free prime $p$.

 Notice that every $e\in \mathcal E$ is of the form $e(\mu )$ for a unique $\mu  \in \mathcal P$.
 Accordingly, elements of $\mathcal P$ will be called $\mathcal E$-paths.
 For $\mu\in \mathcal P$, write
 $$\mathcal Z (\mu) = \{ \eta \in \widehat{\mathcal E}_{\infty} \mid \mu \mu^*\in \eta \}.$$ Depending on the situation, $\mathcal Z(\mu)$ might also be denoted by the
 idempotent it determines, i.e., $\mathcal Z(e(\mu))$. Notice that $\mathcal{Z}(\mu)=\mathcal{U}(\{ \mu\mu^*\}, \emptyset)\cap \widehat{\mathcal{E}}_{\infty}$.}
   \end{definition}

Given finite $X,Y \subseteq \mathcal{P}$, we write $\mathcal{U}^\infty(X, Y) := \mathcal{U}(X, Y)
\cap \widehat{\mathcal{E}}_\infty$. Then the induced topology on $\widehat{\mathcal E}_{\infty}$ is
generated by $\{\mathcal U^{\infty} (X,Y) : X,Y \subseteq \mathcal{P}\text{ are finite}\}$. The
next result provides some properties of the sets $\mathcal Z (\mu) $ just defined.

\begin{proposition}
 \label{prop:basis-of-topology}
 Let $\mathcal P$ be the set of $\mathcal E$-paths. Then
 \begin{enumerate}
  \item the family $\{ \mathcal Z (\mu)\mid \mu \in \mathcal P \}\cup \{\emptyset \}$ is closed
      under finite intersections;
  \item  the family $\{ \mathcal Z (\mu)\mid \mu \in \mathcal P \}$ is a basis for the topology
      of $\widehat{\mathcal E}_{\infty}$;
 \item for $\mu, \rho \in \mathcal P$, the set $\mathcal Z (\mu) \setminus \mathcal Z (\rho)$ is
     a finite disjoint union of sets of the form $\mathcal Z (\mu')$, for $\mu'\in \mathcal P$;
     and
 \item for each $\mu \in \mathcal P$, the set $\mathcal Z (\mu)$ is open and compact.
  \end{enumerate}
\end{proposition}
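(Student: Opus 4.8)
The plan is to reduce everything to statements about the idempotents $e(\mu)$ and, via Corollary~\ref{cor:ultrafilters}, to the combinatorics of infinite paths. Throughout I identify $\mu\in\mathcal P$ with $e(\mu)\in\mathcal E$, so that $\mathcal Z(\mu)=\{\eta\in\widehat{\mathcal E}_\infty: e(\mu)\in\eta\}$ is exactly the set of infinite paths admitting $\mu$ as an initial segment (Theorem~\ref{thm:filters-and-semif-paths}, Definition~\ref{def:initial-segment}). For (1) I would first observe that, since filters are upward closed and closed under products, $\mathcal Z(\mu)\cap\mathcal Z(\rho)=\{\eta:e(\mu)e(\rho)\in\eta\}$: if $e(\mu),e(\rho)\in\eta$ then $e(\mu)e(\rho)\in\eta$, while conversely $e(\mu)e(\rho)\le e(\mu),e(\rho)$. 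By Lemma~\ref{lem:projections1} the idempotent $e(\mu)e(\rho)$ is either $0$, giving $\mathcal Z(\mu)\cap\mathcal Z(\rho)=\emptyset$, or it lies in $\mathcal E$ and hence equals $e(\nu)$ for the unique $\nu\in\mathcal P$ it determines (Definition~\ref{def:familycalP}), giving $\mathcal Z(\mu)\cap\mathcal Z(\rho)=\mathcal Z(\nu)$. This settles (1).

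The heart of the proof is (3). Using (1) I would write $\mathcal Z(\mu)\cap\mathcal Z(\rho)=\mathcal Z(\nu)$ with $e(\nu)\le e(\mu)$, so that $\mathcal Z(\mu)\setminus\mathcal Z(\rho)=\mathcal Z(\mu)\setminus\mathcal Z(\nu)$; replacing $\rho$ by $\nu$ (or finishing at once if the intersection is empty), I may assume $\mathcal Z(\rho)\subseteq\mathcal Z(\mu)$, i.e. $\rho$ extends $\mu$. The engine is a \emph{one-step decomposition}: when $v:=r(\mu)$ is not a sink, $\mathcal Z(\mu)$ is a finite disjoint union of sets $\mathcal Z(\mu')$ with $\mu'$ a one-step extension of $\mu$. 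When $v$ lies in a regular component I would use that an infinite path extending $\mu$ continues through a unique edge $e\in s^{-1}(v)$, so $\mathcal Z(\mu)=\bigsqcup_{e\in s^{-1}(v)}\mathcal Z(\mu e)$, a finite union by row-finiteness. When $v=v^p$ is free and $\mu=\gamma\prod_{j}\alpha(p,j)^{k_j}$, I would fix an index $i$ and split according to the behaviour at $\alpha(p,i)$:
\[
\mathcal Z(\mu)=\mathcal Z\Big(\gamma\,\alpha(p,i)^{k_i+1}\prod_{j\ne i}\alpha(p,j)^{k_j}\Big)\ \sqcup\ \bigsqcup_{t=1}^{g(p,i)}\mathcal Z\big(\gamma\,\alpha(p,i)^{k_i}\beta(p,i,t)\big),
\]
the first ``increment child'' collecting every infinite path extending $\mu$ that does \emph{not} leave $v^p$ through some $\beta(p,i,t)$ after exactly $k_i$ iterations of $\alpha(p,i)$, and the finitely many ``exit children'' collecting those that do. Both decompositions reflect the separated-graph relations \textbf{\ref{pt:KeyDefs}}(ii)(d) and \textbf{\ref{pt:KeyDefs}}(1)(ii), which, although not imposed in $S(E,C)$, are exactly what is encoded in the passage to ultrafilters; I would verify mutual exclusivity and exhaustiveness directly against Definition~\ref{def:initial-segment}. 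With this in hand I would run a recursion: at each stage I choose the index $i$ (resp. edge $e$) prescribed by the first place where $\rho$ departs from the current $\mu$, so that exactly one child contains $\mathcal Z(\rho)$ and the remaining finitely many children are disjoint from it; peeling off these siblings and recursing into the distinguished child strictly decreases the gap between $\mu$ and $\rho$ — measured by c-path depth plus total length/exponent in the terminal component — and so terminates. The union of all peeled siblings is then the required finite disjoint decomposition of $\mathcal Z(\mu)\setminus\mathcal Z(\rho)$.

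For (2) I would combine (1) and (3). The topology on $\widehat{\mathcal E}_\infty$ is generated by the sets $\mathcal U^\infty(X,Y)$ with $X$ reducible to a singleton $\{e(\mu)\}$, and $\mathcal U^\infty(\{e(\mu)\},\{e(\rho_1),\dots,e(\rho_n)\})=\mathcal Z(\mu)\setminus\bigcup_{l}\mathcal Z(\rho_l)$. Subtracting the $\mathcal Z(\rho_l)$ one at a time and applying (3) to each resulting piece exhibits this basic set as a finite disjoint union of sets $\mathcal Z(\mu')$; hence $\{\mathcal Z(\mu):\mu\in\mathcal P\}$ is a basis. For (4), each $\mathcal Z(\mu)=\mathcal U^\infty(\{e(\mu)\},\emptyset)$ is open by definition. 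For compactness I would write $\mathcal Z(\mu)=\mathcal U(\{e(\mu)\},\emptyset)\cap\widehat{\mathcal E}_\infty$; the set $\mathcal U(\{e(\mu)\},\emptyset)=\{\eta\in\widehat{\mathcal E}_0:e(\mu)\in\eta\}$ is compact in the locally compact Hausdorff space $\widehat{\mathcal E}_0$ by \cite{ExelBraz}, while $\widehat{\mathcal E}_\infty$ is closed in $\widehat{\mathcal E}_0$ by Theorem~\ref{thm:tightfilters}; thus $\mathcal Z(\mu)$, being a closed subset of a compact set, is compact.

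The main obstacle is the one-step decomposition at a free prime. The difficulty is that the commuting loops $\alpha(p,j)$ interact non-trivially with the connectors through relations \textbf{\ref{pt:KeyDefs}}(2)(iv), and — crucially — exiting $v^p$ through a connector in direction $i$ places no constraint on the exponents $k_j$ for $j\ne i$. Consequently an infinite path that exits through some $\beta(p,i',t)$ with $i'\ne i$, or that loops forever on all $\alpha(p,j)$, must be shown to land in the increment child, while only those exiting through $\beta(p,i,t)$ after exactly $k_i$ iterations land in the exit children. Checking that these alternatives are simultaneously disjoint and exhaustive is the delicate point, and I would carry it out case-by-case using the precise description of initial segments in Definition~\ref{def:initial-segment}.
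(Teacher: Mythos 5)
Your proposal is correct, and at its core it rests on the same combinatorial facts as the paper's proof: part (1) via the identity $\mathcal U(\{e\},\emptyset)\cap\mathcal U(\{f\},\emptyset)=\mathcal U(\{ef\},\emptyset)$ together with Lemma~\ref{lem:projections1}, and part (4) via compactness of $\mathcal U(\{e(\mu)\},\emptyset)$ in $\widehat{\mathcal E}_0$ plus closedness of $\widehat{\mathcal E}_\infty$ (Theorem~\ref{thm:tightfilters}). The differences are organizational but real. For (3), the paper reduces to the case $\mathcal Z(\rho)\subseteq\mathcal Z(\mu)$ with $\mu$ and $\rho$ ending at the same free vertex and writes the decomposition in closed form, namely $\mathcal Z(\mu)\setminus\mathcal Z(\rho)=\bigsqcup_{\{j:l_j<k_j\}}\bigsqcup_{l_j\le t_j<k_j}\bigsqcup_{s}\mathcal Z(\gamma\alpha(p,j)^{t_j}\beta(p,j,s))$; your one-step splitting at a free prime is exactly the generating relation behind this formula, and unrolling your recursion in that situation reproduces it. Your recursion has the advantage of handling explicitly the case where $\rho$ descends into strictly lower components, which the paper dispatches with a brief ``it suffices to consider'' remark backed by row-finiteness. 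The delicate point you flag --- that a path exiting through a direction $i'\ne i$, or looping forever, lands in the increment child because Definition~\ref{def:initial-segment}(2)(i) constrains only the exponent in the exiting direction --- is precisely the right thing to check, and your verification is sound. For (2), the paper simply cites \cite[Proposition 2.5]{EP1}, whereas you derive it from (1) and (3) by writing $\mathcal U^\infty(\{e(\mu)\},Y)$ as an iterated difference; this is a legitimate self-contained alternative, provided one first reduces $X$ to a singleton by taking the meet (as the paper notes before the proposition) and discards the empty case when that meet is $0$. I see no gaps.
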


\begin{proof}
 (1) This follows from the fact that $\mathcal{Z}(\mu)=\mathcal{U}(\{ \mu\mu^*\}, \emptyset)\cap \widehat{\mathcal{E}}_{\infty}$ and the general equality
 $\mathcal U (\{ e\},\emptyset) \cap \mathcal U ( \{ f \}), \emptyset) =  \mathcal U (\{ ef \},\emptyset) $ for idempotents $e,f$ in any inverse semigroup.

 (2) This is \cite[Proposition 2.5]{EP1}.

(3)  Suppose that $\mathcal Z (\mu) \setminus \mathcal Z (\rho)$ is non-empty for a given pair $\mu ,\rho \in \mathcal P$. Since
$$\mathcal Z (\mu) \setminus \mathcal Z (\rho) = \mathcal Z (\mu ) \setminus (\mathcal Z (\mu ) \cap \mathcal Z (\rho )),$$
we may assume, using (1), that $\mathcal Z (\rho ) \subset \mathcal Z (\mu)$, and hence that $\mu$ is an initial segment of $\rho$.

 Thus, $\mathcal Z (\mu) \setminus \mathcal Z (\rho)$ is the set of infinite paths which have $\mu$ as
 initial segment but do not have $\rho$ as initial segment. Now, using that $E$ is row-finite, we see that it suffices to consider the situation where there is
 a c-path $\gamma$ ending at $v^p$ for some $p\in \Ifree$, and there are exponents $l_j\le k_j$ for all $j$, with strict inequality for at least one of the indices $j\in \{1,\dots ,
 k(p)\}$, such that
 \[
 \mu = \gamma \prod_{j=1}^{k(p)} \alpha (p,j)^{l_j}\qquad\text{and}\qquad \rho = \gamma \prod_{j=1}^{k(p)} \alpha (p,j)^{k_j}.
 \]
 We have
 $$\mathcal Z (\mu) \setminus \mathcal Z (\rho) = \bigsqcup_{\{ j : l_j < k_j \}} \bigsqcup_{l_j\le t_j<k_j} \bigsqcup_{s=1}^{g(p,j)} \mathcal Z (\gamma \alpha (p,j)^{t_j} \beta (p,j,s)),$$
which gives the desired decomposition.

 (4) This is a standard argument, which we review for the sake of completeness. The topology of $\widehat{\mathcal E}_0$ is the topology induced by the embedding into the compact set $\{ 0,1 \}^{\mathcal E}$.
 The sets of the form $\mathcal U (X,\emptyset )$, for $X\ne \emptyset$, are compact in $\widehat{\mathcal E}_0$ because they are closed subsets of the basic compact and open subsets
 $$\{ f\in \{ 0,1 \}^{\mathcal E} \mid f(x) =1 \quad \forall x\in X \}$$
 of $\{ 0,1\}^{\mathcal E }$. (Note that the assumption $X\ne \emptyset$ is crucial here). Since the space $\widehat{\mathcal E}_{\infty} $ is closed in $\widehat{\mathcal E}_0$ (Theorem~\ref{thm:tightfilters}), we conclude that
 $\mathcal U^{\infty} (X, \emptyset ) = \mathcal U (X, \emptyset ) \cap \widehat{\mathcal E}_{\infty}$ is compact in $\widehat{\mathcal E}_{\infty}$.
\end{proof}

 For the next corollary, we need a straightforward lemma, probably well-known.

 \begin{lemma}
 \label{lem:disjoint-unions}
 Let $X$ be a topological space with a basis $\mathcal B$ of the topology satisfying the following properties:
 \begin{enumerate}
  \item Each $B\in \mathcal B$ is a compact open subset of $X$.
  \item $\mathcal B$ is closed under finite intersections
  \item For each $B_1,B_2\in \mathcal B$, we have that $B_1\setminus B_2$ is a finite disjoint union of sets from $\mathcal B$.
 \end{enumerate}
Then every compact open subset of $X$ is a finite disjoint union of members of $\mathcal B$.
   \end{lemma}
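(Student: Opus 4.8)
The plan is to take an arbitrary compact open set $K\subseteq X$ and manufacture a finite \emph{disjoint} decomposition into members of $\mathcal B$ by first covering $K$ and then disjointifying the cover. First I would invoke the fact that $\mathcal B$ is a basis of the topology: since $K$ is open, it is a union of members of $\mathcal B$, and since $K$ is compact while each member of $\mathcal B$ is open, I can extract a finite subcover, writing $K=B_1\cup\cdots\cup B_n$ with each $B_j\in\mathcal B$.

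The key intermediate step is the following claim, which upgrades hypothesis (3) from a single set difference to a difference against a finite union: for all $B,C_1,\dots,C_m\in\mathcal B$, the set $B\setminus(C_1\cup\cdots\cup C_m)$ is a finite disjoint union of members of $\mathcal B$. I would prove this by induction on $m$, the case $m=1$ being exactly (3). For the inductive step, suppose $B\setminus(C_1\cup\cdots\cup C_{m-1})=\bigsqcup_{i=1}^{k}D_i$ with each $D_i\in\mathcal B$. Then
\[
B\setminus(C_1\cup\cdots\cup C_m)=\Big(B\setminus(C_1\cup\cdots\cup C_{m-1})\Big)\setminus C_m=\bigsqcup_{i=1}^{k}(D_i\setminus C_m),
\]
and each $D_i\setminus C_m$ is a finite disjoint union of members of $\mathcal B$ by (3); since the $D_i$ are pairwise disjoint, these sub-unions remain disjoint across distinct $i$, so the whole expression is a finite disjoint union of members of $\mathcal B$.

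Finally I would disjointify the finite cover $K=B_1\cup\cdots\cup B_n$ in the usual telescoping fashion, writing
\[
K=B_1\sqcup(B_2\setminus B_1)\sqcup(B_3\setminus(B_1\cup B_2))\sqcup\cdots\sqcup\Big(B_n\setminus(B_1\cup\cdots\cup B_{n-1})\Big).
\]
These summands are pairwise disjoint by construction, the first is already a member of $\mathcal B$, and each subsequent summand $B_j\setminus(B_1\cup\cdots\cup B_{j-1})$ is a finite disjoint union of members of $\mathcal B$ by the claim. Hence $K$ itself is a finite disjoint union of members of $\mathcal B$, as required. I do not expect a genuine obstacle here, as the argument is entirely routine; the only point requiring care is the bookkeeping in the inductive claim, namely ensuring that disjointness is preserved when each $D_i\setminus C_m$ is in turn expanded into several basis sets. (I note in passing that hypothesis (2), and indeed the compactness half of (1), play no role in this particular argument.)
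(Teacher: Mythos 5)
Your proof is correct, and it differs from the paper's in the way the finite cover is disjointified. The paper extracts a finite subcover $U=\bigcup_{i\in I}B_i$ exactly as you do, but then passes to the ``Venn atoms'' of the cover: the sets $(B_{i_1}\cap\cdots\cap B_{i_r})\setminus\bigcup_{j\notin\{i_1,\dots,i_r\}}(B_{i_1}\cap\cdots\cap B_{i_r}\cap B_j)$ indexed by nonempty subsets of $I$, each of which is then expanded via hypothesis (3); this is where hypothesis (2) enters, since the intersections $B_{i_1}\cap\cdots\cap B_{i_r}$ must again lie in $\mathcal B$ for (3) to apply. You instead use the telescoping decomposition $B_1\sqcup(B_2\setminus B_1)\sqcup\cdots\sqcup\bigl(B_n\setminus(B_1\cup\cdots\cup B_{n-1})\bigr)$, which avoids intersections entirely --- hence your (accurate) observation that (2) and the compactness of the basis elements are not needed for this particular lemma. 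Both arguments rest on the same iterated form of (3), namely that $B\setminus(C_1\cup\cdots\cup C_m)$ is a finite disjoint union of basis sets; the paper leaves this induction implicit (``now use the hypothesis''), whereas you spell it out, including the bookkeeping that disjointness survives when each $D_i\setminus C_m$ is further subdivided. Your version is slightly more economical in its hypotheses; the paper's version produces a canonical refinement of the cover, which is the form actually invoked later in the proof of Lemma~\ref{lem:form-of bisections}(2).
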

 \begin{proof}
  Let $U$ be a compact open subset of $X$. By compactness we can write $U=\bigcup_{i\in I} B_i$, where $I$ is finite and $B_i\in \mathcal B$.
  Now we can refine this decomposition to a disjoint decomposition into compact and open subsets of the form
  $$(B_{i_1}\cap B_{i_2}\cap \cdots \cap B_{i_r}) \setminus \Big[ \bigcup _{j\notin \{i_1,\dots ,i_r\}} (B_{i_1}\cap B_{i_2}\cap \cdots \cap B_{i_r}\cap B_j) \Big],$$
  where $i_1,\dots ,i_r$ are different elements of $I$, and $1\le r\le |I|$. (In case $r=|I|$ this term should be interpreted as $\bigcap_{i\in I} B_i$.)
  Now use the hypothesis to express each of these sets as a finite disjoint union of members of $\mathcal B$.
   \end{proof}

\begin{corollary}
 \label{cor:compact-opens}
 The space $\widehat{\mathcal E}_{\infty}$ of ultrafilters admits a basis of compact open subsets, namely the family $\{ \mathcal Z (\mu ) \}_{\mu \in \mathcal P} $.
 Moreover, every compact open subset of $\widehat{\mathcal E}_{\infty}$ is a finite disjoint union of sets of the form $\mathcal Z (\mu)$, for $\mu \in \mathcal P$.
 \end{corollary}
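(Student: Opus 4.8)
The plan is to derive both assertions directly from Proposition~\ref{prop:basis-of-topology} together with Lemma~\ref{lem:disjoint-unions}; the entire task reduces to matching the hypotheses of the lemma against the conclusions already established in the proposition, so there is no genuinely new computation to perform.

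First I would dispose of the first assertion, which is immediate. Proposition~\ref{prop:basis-of-topology}(2) states that $\{\mathcal Z(\mu) : \mu\in\mathcal P\}$ is a basis for the topology of $\widehat{\mathcal E}_\infty$, while Proposition~\ref{prop:basis-of-topology}(4) states that each $\mathcal Z(\mu)$ is compact and open. Together these say precisely that $\widehat{\mathcal E}_\infty$ admits a basis of compact open subsets, namely this family.

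For the second assertion I would invoke Lemma~\ref{lem:disjoint-unions} with $X=\widehat{\mathcal E}_\infty$ and $\mathcal B=\{\mathcal Z(\mu):\mu\in\mathcal P\}$, and simply verify its three hypotheses. Hypothesis~(1), that each member of $\mathcal B$ is compact and open, is Proposition~\ref{prop:basis-of-topology}(4). Hypothesis~(2), closure of $\mathcal B$ under finite intersections, is Proposition~\ref{prop:basis-of-topology}(1); the only subtlety is that the proposition permits the intersection to be empty, but this causes no harm, since $\emptyset$ is vacuously a finite disjoint union of members of $\mathcal B$, exactly as the proof of Lemma~\ref{lem:disjoint-unions} accommodates. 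Hypothesis~(3), that $B_1\setminus B_2$ is a finite disjoint union of members of $\mathcal B$ for $B_1,B_2\in\mathcal B$, is precisely Proposition~\ref{prop:basis-of-topology}(3). With all three hypotheses in place, Lemma~\ref{lem:disjoint-unions} yields that every compact open subset of $\widehat{\mathcal E}_\infty$ is a finite disjoint union of sets $\mathcal Z(\mu)$ with $\mu\in\mathcal P$, completing the proof.

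The only point demanding any care is the empty-intersection issue just noted, and even that is already absorbed into the statement and proof of Lemma~\ref{lem:disjoint-unions}, so I anticipate no real obstacle; the substantive content was entirely carried by the preceding proposition and lemma.
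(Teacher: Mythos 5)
Your proposal is correct and follows exactly the paper's own argument: the paper likewise deduces the corollary by observing that $\{\mathcal Z(\mu)\}_{\mu\in\mathcal P}\cup\{\emptyset\}$ satisfies the hypotheses of Lemma~\ref{lem:disjoint-unions} by Proposition~\ref{prop:basis-of-topology}. Your extra care about the empty intersection is a sound (if minor) clarification of the same reasoning.
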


\begin{proof}
By Proposition~\ref{prop:basis-of-topology}, the family $\{ \mathcal Z (\mu ) \}_{\mu \in \mathcal
P} \cup \{ \emptyset \}$ satisfies the hypothesis of Lemma~\ref{lem:disjoint-unions}. Therefore
the result follows from Lemma~\ref{lem:disjoint-unions}.
\end{proof}



\section{The \texorpdfstring{$K$}{K}-algebra \texorpdfstring{$\mathcal S_K(E,C)$}{} induced by the inverse semigroup \texorpdfstring{$S(E,C)$}{S(E,C)}}\label{Sect:Algebra-Semigroup}

Following the notation of section~\ref{sect:def-inverse-semigroupS}, given an adaptable graph
$(E,C)$, we denote its associated inverse semigroup by $S(E,C)$, and its space of tight filters by
$\widehat{\mathcal E}_{tight}$. By Corollary~\ref{cor:ultrafilters} and
Theorem~\ref{thm:tightfilters}, the set of tight filters corresponds to the set of infinite paths.
The main result of this section is Theorem~\ref{thm:Steinbergalgebra-and-our-algebra}, where we
prove that the Steinberg algebra $A_K(\mathcal G_{tight}(E,C))$ of the groupoid $\mathcal
G_{tight}(E,C)$ of germs of the canonical action of $S(E,C)$ on  $\widehat{\mathcal E}_{tight}$ is
canonically isomorphic to the $*$-algebra $\mathcal S_K(E,C)$  defined by the family of generators
$E^0\cup E^1\cup \{ (t_i^v)^{\pm 1} \}$ and the relations given in~{\bf\ref{pt:KeyDefs}}. In order to ease understanding this section we have divided it in three parts. Firstly we recall the construction of the groupoid $\mathcal
G_{tight}(E,C)$. Then, we study the behaviour of $\mathcal S_K(E,C)$, and we finish the section proving Theorem~\ref{thm:Steinbergalgebra-and-our-algebra}.
\subsection{Steinberg algebras and inverse semigroup representations}
\label{sect:Steinbergalges} In this subsection we explain the construction of the groupoid of
germs of an action of an inverse semigroup $S$ on a locally compact Hausdorff topological space $X$ (in the sense of \cite[Definition 4.3]{ExelBraz}),
and review the fact that the Steinberg
algebra is the universal *-algebra for tight representations of $S$.

Let $S$ be an inverse semigroup with $0$, and denote by $\mathcal E$ its semilattice of idempotents.
First recall that, if $F\subseteq \mathcal{E}$ is any subset, then a finite subset $\Sigma\subseteq F$ is a {\bf finite cover} of $F$ when for any $0\neq f\in F$ there exists $e\in \Sigma$ such that $fe\neq 0$.

\begin{definition}\label{def:tight} {\rm (cf. \cite{ExelBraz}, \cite[Section 5]{Steinb16}) Let $S$ be an inverse semigroup, and $A$ be a $*$-algebra over a field with involution $K$. Then, we say that $\pi:S\to A$ is a
        \emph{representation} if $\pi(st)=\pi(s)\pi(t)$ and $\pi(s^*)=\pi(s)^*$ for all $s,t\in S$. A representation $\pi$ is said to be a \emph{tight representation} if for every idempotent $e\in \mathcal E$
        and every finite cover $Z$ of $\mathcal F_e:= \{ f\in \mathcal E \mid f\le e \}$, we have
        $$\pi (e) = \bigvee_{z\in Z}\pi(z)$$
        in the (generalized) Boolean algebra of idempotents of the commutative $*$-subalgebra $A_{\mathcal E}$ of $A$ generated by $\pi (\mathcal E )$. }
\end{definition}

Given a semilattice $\mathcal E$ with $0$, the space $\widehat{\mathcal E}$ is the space of
non-zero semilattice homomorphisms $\varphi
\colon \mathcal E \to \{ 0,1 \}$, endowed with the relative topology from $\{ 0,1 \}^{\mathcal
E}$. The space of characters $\widehat{\mathcal E}_0$ is the space of those homomorphisms
$\varphi\in \widehat{\mathcal E}$  such that $\varphi (0_S)= 0$ \cite[Definition 12.4]{ExelBraz}, and may be identified
with the space of filters over $\mathcal E$ (see Definition~\ref{def:filters}) via the map that
associates to each filter $F$ its characteristic function $1_F:\mathcal E\to \{0,1\}$. A character
$\varphi$ is said to be {\it tight} if $\varphi (e) = \bigvee_{f\in Z} \varphi (f) $ for every
idempotent $e\in \mathcal E$ and every finite cover $Z$ of $\mathcal F_e$. By \cite[Proposition
11.8]{ExelBraz}, this is equivalent to the original definition of tight character in
\cite[Definition 12.8]{ExelBraz}. The space of tight characters will be identified with the space
of tight filters, and will be denoted by $\widehat{\mathcal E}_{tight}$ in accordance with the
notation employed in Section~\ref{sect:filters}.


We recall below the construction of the groupoid of germs for completeness (see \cite{ExelBraz} for further details). If $\alpha$ is an action of an inverse semigroup $S$
on a locally compact Hausdorff space $X$, then the groupoid of germs $S \times_\alpha X$ is
defined as follows: define a relation $\sim$ on $\{(s, x) \in S \times X : x \in \text{dom}(\alpha_s)\}$
by $(s, x) \sim (s', y)$ if $x = y$ and there is an idempotent $p \in E(S)$ such that $x \in \text{dom}(\alpha_p)$ and $sp = s'p$. This is an equivalence relation, and the
collection $S \times_\alpha X$ of equivalence classes for this relation is a locally
compact \'etale groupoid with unit space $X$ and structure maps $r([s,x]) =
\alpha_s(x)$, $s([s,x]) = x$, $[s,\alpha_t(x)][t, x] = [st, x]$, and $[s,x]^{-1} =
[s^*, \alpha_s(x)]$. When $X$ is the space $\widehat{\mathcal E}_{tight}$ of tight filters on the semilattice $\mathcal E$ of idempotents of $S$,
the topology of the groupoid of germs $\mathcal G_{tight}(S):= S\times_{\alpha} \widehat{\mathcal E}_{tight}$ is generated by the sets
$$ \Theta (s,U) := \{ [s,x] \in \mathcal G  \mid x\in U \},$$
where $U$ is an open subset of $\widehat{\mathcal E}_{tight}$ such that $U\subseteq D_{s^*s} := \{ \varphi \in \widehat{\mathcal E}_{tight} \mid \varphi (s^*s) = 1 \}$.
Endowed with this topology, $\mathcal G_{tight}(S)$ is an ample groupoid.
Recall that the canonical action $\alpha$ of $S$ on $\widehat{\mathcal E}_{tight}$ is given as follows. For $s\in S$, the map $\alpha_s\colon D_{s^*s}\to D_{ss^*}$ is the map defined by
$\alpha _s(\varphi )(e)= \varphi (s^*es)$ for all $\varphi\in D_{s^*s}$.

Now, given an inverse semigroup $S$ and a field with involution $K$, we may consider the Steinberg
$K$-algebra $A_K(\mathcal{G}_{tight}(S))$ associated to the ample groupoid
$\mathcal{G}_{tight}(S)$ (Definition~\ref{def:Steinbergalgebra}). If $\mathcal{G}_{tight}(S)$ is
Hausdorff, then the algebra $A_K(\mathcal{G}_{tight}(S))$ is just the algebra of $K$-valued
locally constant functions with compact support, endowed with the convolution product and the
involution $f^*(\alpha) = f(\alpha^{-1})^*$.

We say that a $*$-algebra $A$, together with a tight representation $\iota \colon S \to A$,  is {\it universal for tight representations} if given any $*$-algebra $B$ and any tight representation $\phi \colon
S\to B$, there is a unique $*$-homomorphism $\tilde{\phi}\colon A\to B$ such that $\tilde{\phi}\circ \iota = \phi$. By the usual argument, such a universal tight $*$-algebra is unique up to $*$-isomorphism.

We will need the following result, due to Steinberg \cite{Steinb16}.

\begin{theorem} \cite[Corollary 5.3]{Steinb16}
    \label{thm:steinberg16}
    Let $S$ be a Hausdorff inverse semigroup with zero and let $K$ be a field with involution. Then $A_K(\mathcal G _{tight}(S))$ is the universal $*$-algebra for tight representations of $S$.
\end{theorem}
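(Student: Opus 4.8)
The plan is to produce a canonical tight representation $\iota\colon S\to A_K(\mathcal G_{tight}(S))$ and then verify the universal property by hand. For $s\in S$ the set $\Theta(s,D_{s^*s})$ is a compact open bisection of the ample groupoid $\mathcal G_{tight}(S)$, so I would set $\iota(s):=1_{\Theta(s,D_{s^*s})}$. Using the rule $1_B1_D=1_{BD}$ for compact open bisections together with the identities $\Theta(s,D_{s^*s})\,\Theta(t,D_{t^*t})=\Theta(st,D_{(st)^*(st)})$ and $\Theta(s,D_{s^*s})^{-1}=\Theta(s^*,D_{ss^*})$, one checks that $\iota$ is a $*$-representation. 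To see that $\iota$ is tight, observe that for an idempotent $e$ one has $\iota(e)=1_{D_e}$, a diagonal idempotent, and that the join $\bigvee_{z\in Z}\iota(z)$ in the Boolean algebra of diagonal idempotents is the indicator of $\bigcup_{z\in Z}D_z$; the defining property of a finite cover $Z$ of $\mathcal F_e$ forces $D_e=\bigcup_{z\in Z}D_z$ inside $\widehat{\mathcal E}_{tight}$, whence $\iota(e)=\bigvee_{z\in Z}\iota(z)$.

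Next I would show that $\iota(S)$ spans $A_K(\mathcal G_{tight}(S))$ as a $K$-vector space, which will give uniqueness of any extension for free. Every element is a $K$-combination of indicators $1_{\Theta(s,U)}$ with $U$ a compact open subset of $D_{s^*s}$. Since the sets $D_e$ ($e\in\mathcal E$) coincide with the sets $\mathcal Z(\mu)$ of Definition~\ref{def:familycalP}, Corollary~\ref{cor:compact-opens} shows that each such $U$ is a finite disjoint union of sets $D_e$; hence $1_U$ is an integer combination of the diagonal idempotents $1_{D_e}$, and $1_{\Theta(s,U)}=\iota(s)\,1_U$ becomes a $K$-combination of elements $\iota(s)\iota(e)=\iota(se)$. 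Thus $\iota(S)$ spans the algebra.

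For existence, given a tight representation $\phi\colon S\to B$ I would try to define $\tilde\phi(\sum_i c_i\iota(s_i)):=\sum_i c_i\phi(s_i)$ and prove this is well defined, i.e. that $\sum_i c_i\iota(s_i)=0$ in $A_K$ implies $\sum_i c_i\phi(s_i)=0$ in $B$. The strategy would be to pass to a standard form: refining supports with the meet operation in $\mathcal E$, I may assume each summand has the shape $c_i\,\iota(s_ie_i)=c_i\,1_{\Theta(s_i,D_{e_i})}$ with the bisections $\Theta(s_i,D_{e_i})$ pairwise equal or disjoint. The value of $\sum_i c_i1_{\Theta(s_i,D_{e_i})}$ at a germ $[s,x]$ is the sum of those $c_i$ with $[s_i,x]=[s,x]$, and vanishing of the function means every such partial sum is $0$. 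Germ equality $[s_i,x]=[s_j,x]$ holds exactly when there is an idempotent $p$ with $x\in D_p$ and $s_ip=s_jp$, and this relation is respected by $\phi$ because $s_ip=s_jp$ gives $\phi(s_i)\phi(p)=\phi(s_j)\phi(p)$. Feeding these germ identities, together with the cover relations furnished by tightness, back into $\sum_i c_i\phi(s_i)$ should exhibit it as a sum of terms that individually vanish.

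The hard part will be exactly this well-definedness step: establishing that the only relations among the elements $\iota(s)$ in $A_K(\mathcal G_{tight}(S))$ are consequences of the $*$-multiplicative relations of $S$ and the tight-cover relations. Equivalently, one must show that the natural surjection onto $A_K(\mathcal G_{tight}(S))$ from the abstractly presented universal tight $*$-algebra has trivial kernel. The delicate point is the transition from ``a compactly supported locally constant function is identically zero'' to ``a finite algebraic identity holds in $B$,'' which requires careful bookkeeping of germ equalities against the tightness partitions. Here the Hausdorff hypothesis on $S$ is essential, since it guarantees that $A_K(\mathcal G_{tight}(S))$ genuinely consists of locally constant compactly supported functions and hence that vanishing can be tested germ by germ.
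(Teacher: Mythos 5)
First, a point of comparison: the paper does not prove Theorem~\ref{thm:steinberg16} at all --- it is imported verbatim from \cite[Corollary 5.3]{Steinb16}, so there is no in-paper argument to measure your proposal against. Judged on its own, your architecture (construct $\iota$, check it is tight, show $\iota(S)$ spans, then extend a given tight representation $\phi$ by linearity and prove well-definedness) is the right one and matches the shape of Steinberg's argument. The first two steps are essentially fine, with one caveat: you invoke Definition~\ref{def:familycalP} and Corollary~\ref{cor:compact-opens}, which are proved only for the specific semilattice of $S(E,C)$, whereas the theorem concerns an arbitrary Hausdorff inverse semigroup with zero. For general $S$ a compact open subset of $\widehat{\mathcal E}_{tight}$ need not be a disjoint union of single sets $D_e$; the correct general statement is that it is a disjoint union of sets of the form $D_e \setminus \bigcup_j D_{f_j}$, whose indicators are still $\ZZ$-combinations of the $\iota(e)$'s via $1_{D_e\setminus D_f}=\iota(e)-\iota(ef)$, so the spanning conclusion survives, but not by the route you cite.

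The genuine gap is the well-definedness step, which you explicitly defer (``should exhibit it as a sum of terms that individually vanish''). That step \emph{is} the theorem: it amounts to showing that the canonical surjection from the abstractly presented universal tight $*$-algebra onto $A_K(\mathcal G_{tight}(S))$ is injective, and nothing in your sketch establishes it. Concretely, two things are missing. First, after refining, the relevant diagonal pieces are sets $D_e\setminus\bigcup_j D_{f_j}$, and you must prove that such a set is empty in $\widehat{\mathcal E}_{tight}$ \emph{exactly} when $\{f_j\}$ is a finite cover of $\mathcal F_e$; this is where the characterization of tight characters and a compactness/Stone-duality argument enter, and it is the only mechanism by which ``the function vanishes'' can be converted into relations that a tight (rather than arbitrary) representation $\phi$ is guaranteed to satisfy. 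Second, the partition of the index set into germ classes $[s_i,x]=[s_j,x]$ varies with $x$; to extract finitely many algebraic identities valid in $B$ you must decompose the unit space into finitely many compact opens on which this partition is constant, handle the empty pieces by the cover criterion above, and on the nonempty pieces verify that the witnessing idempotents $p$ with $s_ip=s_jp$ can be chosen compatibly with the decomposition. None of this bookkeeping is carried out, so the proposal does not yet constitute a proof; it is an accurate description of what a proof would have to do.
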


We refer the reader to \cite{Steinb16} for the definition of a Hausdorff inverse semigroup. For us, it is enough to know that any $E^*$-unitary inverse semigroup is Hausdorff
(see \cite[page 1037]{Steinb16}).

\subsection{The \texorpdfstring{$*$}{*}-algebra \texorpdfstring{$\mathcal S_K(E,C)$}{S\_K(E,C)} and expansions}
We define the $K$-algebra $\mathcal S_K(E,C)$  as the $*$-algebra over $K$ with generators
$E^0\cup E^1\cup \{ (t_i^v)^{\pm 1} : i\in \N,  v\in E^0 \}$  and defining relations given
in~{\bf\ref{pt:KeyDefs}} (including this time {\it all} the relations); notice that $\mathcal
S_K(E,C)$ is the $K$-span of the elements of the inverse semigroup $S(E,C)$. We will show that the
natural representation of $S(E,C)$ inside $\mathcal{S}_K(E,C)$ is its universal tight
representation (Theorem~\ref{thm:tight}, Theorem~\ref{thm:Universal}).

Let us start fixing the notation  $\iota:S(E,C)\to\mathcal S_K(E,C)$ for the natural
representation of $S(E,C)$ into $\mathcal S_K(E,C)$, and recalling that all relevant notions have
been defined in Section~\ref{sect:Steinbergalges}.

\begin{remark}
\label{rk:equalities} {\rm There are a couple of equalities that
play a key role in checking tightness of $\iota$. We recall them
below for convenience.
\begin{enumerate}[\rm (a)]
\item for each $p\in I_{reg}$ and each $v\in E^0_p$, $$v=\sum_{e\in s^{-1}(v)}ee^*.$$
\item for each $p\in I_{free}$ and each $1\leq i\leq k(p)$,
    $$v^p=\alpha(p,i)\alpha(p,i)^*+\sum^{g(p,i)}_{s=1}\beta(p,i,s)\beta(p,i,s)^*$$
\end{enumerate}}
\end{remark}

Let us start by establishing the following result about suprema in $\mathcal S_K(E,C)$:

\begin{lemma}
\label{lem:preservation-suprema} For $p\in \Ifree$, let $\mathbf m_1,\mathbf m_2\in \mathcal E$
monomials associated to $p$. Then $\iota (\mathbf m_1\vee\mathbf m_2)=\iota(\mathbf m_1)\vee
\iota(\mathbf m_2)$.
\end{lemma}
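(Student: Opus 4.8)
The plan is to translate the right-hand join, taken in the Boolean algebra of idempotents of the commutative algebra $A_{\mathcal E}$, into the explicit inclusion--exclusion formula $\iota(\mathbf m_1)\vee\iota(\mathbf m_2)=\iota(\mathbf m_1)+\iota(\mathbf m_2)-\iota(\mathbf m_1)\iota(\mathbf m_2)$ (valid because $\iota(\mathbf m_1),\iota(\mathbf m_2)$ are commuting idempotents), and then to verify the resulting algebraic identity directly inside $\mathcal S_K(E,C)$. By Definition~\ref{def:idempotents} I may write $\mathbf m_1=\prod_{j=1}^{k(p)}\alpha(p,j)^{l_j}(\alpha(p,j)^*)^{l_j}$ and $\mathbf m_2=\prod_{j=1}^{k(p)}\alpha(p,j)^{m_j}(\alpha(p,j)^*)^{m_j}$; a common c-path prefix $\gamma$, if present, can be stripped off by conjugating with the partial isometry $\iota(\gamma)$, which preserves products and sums and hence Boolean joins. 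By Remark~\ref{rem:lub-in-E} and Lemma~\ref{lem:projections1}(3)(a), $\mathbf m_1\vee\mathbf m_2$ is the monomial with exponents $\min(l_j,m_j)$ while $\mathbf m_1\mathbf m_2$ is the monomial with exponents $\max(l_j,m_j)$. Thus the lemma reduces to the single identity $\iota(\mathbf m_{\min})=\iota(\mathbf m_1)+\iota(\mathbf m_2)-\iota(\mathbf m_{\max})$ in $\mathcal S_K(E,C)$.

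To prove this I would decompose each idempotent monomial into mutually orthogonal ``rungs''. Put $P_i^{(n)}=\iota(\alpha(p,i))^{\,n}\iota(\alpha(p,i)^*)^{\,n}$; since $\alpha(p,i)^*\alpha(p,i)=v^p$ these form a decreasing chain with $P_i^{(0)}=\iota(v^p)$, and the covering relation {\bf\ref{pt:KeyDefs}}(1)(ii) (equivalently Remark~\ref{rk:equalities}(b)) gives $P_i^{(c-1)}-P_i^{(c)}=\sum_{s=1}^{g(p,i)}\iota(\xi^{(c)}_{i,s})\iota(\xi^{(c)}_{i,s})^*$, where $\xi^{(c)}_{i,s}:=\alpha(p,i)^{c-1}\beta(p,i,s)\in S(E,C)$. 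Denoting this rung by $R_i^{(c)}$, a telescoping sum yields $\iota\big(\prod_i\alpha(p,i)^{n_i}(\alpha(p,i)^*)^{n_i}\big)=\iota(v^p)-\sum_i\sum_{c=1}^{n_i}R_i^{(c)}$.

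The crux is the claim that the rungs are pairwise orthogonal: $R_i^{(c)}R_j^{(d)}=0$ whenever $(i,c)\neq(j,d)$. Since $\iota$ is a representation, $\iota(\xi^{(c)}_{i,s})^*\iota(\xi^{(d)}_{j,t})=\iota\big((\xi^{(c)}_{i,s})^*\xi^{(d)}_{j,t}\big)$, and the $\xi^{(c)}_{i,s}$ are distinct c-paths of depth one, hence pairwise incompatible (neither a prefix of the other); by the multiplication rule in Definition~\ref{def:multiplication1} their ``starred'' product vanishes in $S(E,C)$, so these middle factors are $0$ and $R_i^{(c)}R_j^{(d)}=0$. (Alternatively one checks this from {\bf\ref{pt:KeyDefs}}(1)(iv),(v).) Granting orthogonality, $\iota(\mathbf m_{\mathbf n})=\iota(v^p)-\sum_{i}\sum_{c\le n_i}R_i^{(c)}$, and the desired identity follows by applying, coefficientwise to the orthogonal rungs, the elementary scalar relation $\mathbf 1_{\{c\le l_i\}}+\mathbf 1_{\{c\le m_i\}}-\mathbf 1_{\{c\le\max(l_i,m_i)\}}=\mathbf 1_{\{c\le\min(l_i,m_i)\}}$.

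I expect the orthogonality of rungs across distinct loop directions to be the main obstacle, and it is exactly here that the extra relation {\bf\ref{pt:KeyDefs}}(1)(ii)---present in $\mathcal S_K(E,C)$ but deliberately omitted from $S(E,C)$---is essential. For genuinely independent commuting projections the identity fails in the ``crossing'' situation $l_i\le m_i$ but $l_j>m_j$; it is precisely the covering relation that forces the complements $v^p-P_i^{(1)}$ in different directions to be orthogonal, and thereby rescues the inclusion--exclusion count.
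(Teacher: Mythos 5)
Your proof is correct and rests on the same two pillars as the paper's argument: the formula $e\vee f=e+f-ef$ for commuting idempotents, and the covering relation $v^p=\alpha(p,i)\alpha(p,i)^*+\sum_{s}\beta(p,i,s)\beta(p,i,s)^*$ together with the mutual orthogonality of the resulting $\beta$-terms (which, as you note, follows from the incompatibility of the distinct depth-one c-paths $\alpha(p,i)^{c-1}\beta(p,i,s)$). The only difference is organizational: you telescope every monomial down to $\iota(v^p)$ minus a common orthogonal family of rungs and verify the identity by indicator arithmetic, whereas the paper expands $\iota(\mathbf m_1\vee\mathbf m_2)$ only as far as $\iota(\mathbf m_1\mathbf m_2)$ plus the same correction terms, checks that $\iota(\mathbf m_1)\vee\iota(\mathbf m_2)$ absorbs each summand to get one inequality, and notes that the reverse inequality is obvious.
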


\begin{proof}
We can describe $\mathbf m_1$ and $\mathbf m_2$ as follows:
$$\mathbf m_1=\prod^{k(p)}_{i=1}\alpha(p,i)^{k_i}(\alpha(p,i)^*)^{k_i}\,\,\text{ and }\,\,\mathbf m_2=\prod^{k(p)}_{i=1}\alpha(p,i)^{l_i}(\alpha(p,i)^*)^{l_i}.$$
Let $s_i:=\max\{k_i,l_i\}$ and $r_i:=\min\{k_i,l_i\}$ for $1\leq i\leq k(p)$. Then
$$\mathbf m_1\_\mathbf m_2=\prod^{k(p)}_{i=1}\alpha(p,i)^{s_i}(\alpha(p,i)^*)^{s_i}\,\,\text{ and
}\,\,\mathbf m_1\vee\mathbf m_2=\prod^{k(p)}_{i=1}\alpha(p,i)^{r_i}(\alpha(p,i)^*)^{r_i}.$$ By
using the expansion rule explained in Remark~\ref{rk:equalities} (b) as much as needed, we have:
\begin{align*}
\iota(\mathbf m_1\vee\mathbf m_2)
    = \mathbf m_1\mathbf m_2&{}+\sum_{k_i<l_i}\sum_{r=1}^{g(p,i)}\sum_{t=1}^{l_i-k_i-1}\alpha(p,i)^{k_i+t} \beta(p,i,r)\beta(p,i,r)^*(\alpha(p,i)^*)^{k_i+t}\\
                            &{}+\sum_{l_j<k_j}\sum^{g(p,j)}_{r=1}\sum_{t=1}^{k_j-l_j-1}\alpha(p,j)^{l_j+t}\beta(p,j,r)\beta(p,j,r)^{*}(\alpha(p,j)^*)^{l_j+t},
\end{align*}
where the latter computation is performed in the algebra $\mathcal S_K(E,C)$.

Now, we have:
\begin{enumerate}
\item $[\iota(\mathbf m_1)\vee\iota(\mathbf m_2)](\mathbf m_1\mathbf m_2)=\mathbf m_1\mathbf m_2.$
\item For each summand where $k_i<l_i$,
\begin{align*}
[\iota(\mathbf m_1)&{}\vee\iota(\mathbf m_2)]\left(\sum_{r=1}^{g(p,i)}\sum_{t=1}^{l_i-k_i-1}\alpha(p,i)^{k_i+t}\beta(p,i,r)\beta(p,i,r)^*(\alpha(p,i)^*)^{k_i+t}\right)\\
    &= (\mathbf m_1+\mathbf m_2-\mathbf m_1\mathbf m_2)\left(\sum_{r=1}^{g(p,i)}\sum_{t=1}^{l_i-k_i-1}\alpha(p,i)^{k_i+t}\beta(p,i,r)\beta(p,i,r)^*(\alpha(p,i)^*)^{k_i+t}\right)\\
    &={(a)}
\end{align*}
Fixing $r$ and $t$, we have
\begin{align*}
\mathbf m_1(\alpha(p,i)^{k_i+t}&\beta(p,i,r)\beta(p,i,r)^*(\alpha(p,i)^*)^{k_i+t})\\
    &= \prod^{k(p)}_{j=1}\alpha(p,j)^{k_j} (\alpha(p,j)^*)^{k_j}(\alpha(p,i)^{k_i+t}\beta(p,i,r)\beta(p,i,r)^*(\alpha(p,i)^*)^{k_i+t})\\
    &=\alpha(p,i)^{k_i+t}\beta(p,i,r)\beta(p,i,r)^*(\alpha(p,i)^*)^{k_i+t}.
\end{align*}
Indeed, for $j\neq i$, we have an absorption of the $j$-th part and, for $i=j$, one has $\alpha(p,i)^{k_i}(\alpha(p,i)^*)^{k_i}\alpha(p,i)^{k_i+t}=\alpha(p,i)^{k_i+t}.$

The same product for $\mathbf m_2$ equals $0$ since $k_i+t<l_i$. And, of course, the same
happens for $\mathbf m_1 \mathbf m_2$. Therefore,
$${(a)}= \sum_{r=1}^{g(p,i)}\sum_{t=1}^{l_i-k_i-1}\alpha(p,i)^{k_i+t}\beta(p,i,r)\beta(p,i,r)^*(\alpha(p,i)^*)^{k_i+t}.$$
\item For the summands where $k_j < l_j$, an analogous argument works.
\end{enumerate}
Therefore, $(\iota(\mathbf m_1)\vee\iota(\mathbf m_2))\_(\iota(\mathbf m_1\vee\mathbf m_2))=\iota(\mathbf m_1\vee\mathbf m_2)$, whence $\iota(\mathbf m_1\vee \mathbf m_2)\leq \iota(\mathbf m_1)\vee\iota(\mathbf m_2)$.
Since the reverse inequality is obvious, we are done.
\end{proof}

Let us now  introduce some properties for finite covers $\Sigma$ of $\mathcal F_e$.

\begin{definition}
Given $\Sigma\subset\mathcal F_e$ a finite cover, we will say that $\Sigma$ is {\bf orthogonal} if for
all $f\neq g\in\Sigma$, we have that $f$ is orthogonal to $g$, i.e. $fg=0$.
\end{definition}

\begin{proposition}\label{prop:imageiota}
Let $e\in\mathcal E$ and $\Sigma\subset\mathcal F_e$ a finite cover of $\mathcal F _e$. Then there exists an orthogonal finite cover $\widehat\Sigma$ such that
$$\bigvee_{f\in \Sigma}\iota(f)=\bigvee_{g\in\widehat\Sigma}\iota(g).$$
\end{proposition}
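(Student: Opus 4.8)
The plan is to lift the combinatorial structure of the $\mathcal E$-paths developed in Section~\ref{sect:filters} to the algebra $\mathcal S_K(E,C)$. Write every idempotent uniquely as $e(\mu)$ for an $\mathcal E$-path $\mu\in\mathcal P$ (Definition~\ref{def:familycalP}), so that $e=e(\mu_e)$ and each $f\in\Sigma$ equals $e(\mu_f)$. I would first record, from Lemma~\ref{lem:projections1} and Proposition~\ref{prop:basis-of-topology}(1), the dictionary: $f\le e$ holds exactly when $\mu_e$ is an initial segment of $\mu_f$, equivalently $\mathcal Z(\mu_f)\subseteq\mathcal Z(\mu_e)$; and the product $e(\mu)e(\rho)$ is again an idempotent whose associated set is $\mathcal Z(\mu)\cap\mathcal Z(\rho)$. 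The first reduction is then that, since each $\mathcal Z(\mu)$ is compact open (Proposition~\ref{prop:basis-of-topology}(4)) and $\widehat{\mathcal E}_\infty$ is Hausdorff, a finite union $\bigcup_{f\in\Sigma}\mathcal Z(\mu_f)$ is clopen; using that the $\mathcal Z(\mu)$ form a basis, $\Sigma$ is a finite cover of $\mathcal F_e$ if and only if this union is dense in, hence equal to, $\mathcal Z(\mu_e)$.

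Next I would produce the orthogonal family. Following the refinement procedure in the proof of Lemma~\ref{lem:disjoint-unions}, whose hypotheses hold by Proposition~\ref{prop:basis-of-topology}(1),(3),(4), I obtain finitely many $\mathcal E$-paths $\nu_1,\dots,\nu_m$ with the $\mathcal Z(\nu_k)$ pairwise disjoint, with $\bigsqcup_k\mathcal Z(\nu_k)=\bigcup_{f\in\Sigma}\mathcal Z(\mu_f)$, and with each $\mathcal Z(\mu_f)$ equal to the union of those $\mathcal Z(\nu_k)$ it contains (a common disjoint refinement of the finite clopen family $\{\mathcal Z(\mu_f)\}$). By the explicit form of the building blocks of this refinement (intersections, and the set-differences of Proposition~\ref{prop:basis-of-topology}(3)), each $\nu_k$ has $\mu_e$ as an initial segment. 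Put $g_k=e(\nu_k)$ and $\widehat\Sigma=\{g_1,\dots,g_m\}$. Then disjointness gives $g_kg_{k'}=0$ for $k\ne k'$; each $\nu_k$ extends $\mu_e$, so $g_k\le e$ and $\widehat\Sigma\subseteq\mathcal F_e$; and since $\Sigma$ is a cover the first paragraph yields $\bigsqcup_k\mathcal Z(\nu_k)=\mathcal Z(\mu_e)$, whence $\widehat\Sigma$ is again a cover. Thus $\widehat\Sigma$ is an orthogonal finite cover of $\mathcal F_e$.

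It remains to match the joins inside $A_{\mathcal E}$, and this is the crux, since at this stage we cannot yet invoke the isomorphism with a Steinberg algebra (that is Theorem~\ref{thm:Steinbergalgebra-and-our-algebra}, proved later). The key step is an \emph{algebraic realization lemma}: whenever $\mathcal Z(\mu)=\bigsqcup_k\mathcal Z(\nu_k)$ is a finite disjoint decomposition into basic sets with each $\nu_k$ extending $\mu$, then in $\mathcal S_K(E,C)$ one has $\iota(e(\mu))=\sum_k\iota(e(\nu_k))$, an orthogonal sum. I would prove this by induction on the number of elementary refinement steps, an elementary step being one of the decompositions in Proposition~\ref{prop:basis-of-topology}(3): for a regular prime it is implemented by the relation $v=\sum_{e\in s^{-1}(v)}ee^*$ of Remark~\ref{rk:equalities}(a), conjugated by the relevant path, and for a free prime by $v^p=\alpha(p,i)\alpha(p,i)^*+\sum_s\beta(p,i,s)\beta(p,i,s)^*$ of Remark~\ref{rk:equalities}(b), whose iterated use is exactly the computation already performed in Lemma~\ref{lem:preservation-suprema}. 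Applying this realization lemma to $\mathcal Z(\mu_f)=\bigsqcup_{k\in K_f}\mathcal Z(\nu_k)$, where $K_f=\{k:\mathcal Z(\nu_k)\subseteq\mathcal Z(\mu_f)\}$, gives $\iota(f)=\sum_{k\in K_f}\iota(g_k)$, expressing each $\iota(f)$ as a sum over a subset of the orthogonal idempotents $\iota(g_k)$.

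Finally, since the $\iota(g_k)$ are pairwise orthogonal idempotents of $A_{\mathcal E}$, the join of any family of their partial sums is the sum over the union of the index sets; as $\bigcup_{f\in\Sigma}\mathcal Z(\mu_f)=\bigsqcup_k\mathcal Z(\nu_k)$ forces $\bigcup_{f\in\Sigma}K_f=\{1,\dots,m\}$, I conclude
$$\bigvee_{f\in\Sigma}\iota(f)=\bigvee_{f\in\Sigma}\sum_{k\in K_f}\iota(g_k)=\sum_{k=1}^m\iota(g_k)=\bigvee_{g\in\widehat\Sigma}\iota(g),$$
as required. The main obstacle is the realization lemma of the third paragraph: it is the one place where one must pass from set-theoretic identities among the $\mathcal Z(\mu)$ to genuine identities in $\mathcal S_K(E,C)$, and it rests entirely on the expansion relations of Remark~\ref{rk:equalities} together with the bookkeeping already carried out in Lemma~\ref{lem:preservation-suprema}.
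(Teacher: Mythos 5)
Your argument is correct in outline, but it takes a genuinely different route from the paper. The paper's proof is a short induction on $|\Sigma|$: as long as two elements $f,g\in\Sigma$ fail to be orthogonal, Lemma~\ref{lem:projections1} says they are either comparable (drop the smaller one) or of the form $\gamma\mathbf m(p)\gamma^*$, $\gamma\mathbf m'(p)\gamma^*$ for a common c-path $\gamma$ and a free prime $p$ (replace the pair by the join $\gamma(\mathbf m(p)\vee\mathbf m'(p))\gamma^*$, which lies in $\mathcal F_e$ by Remark~\ref{rem:lub-in-E} and preserves the join in $\mathcal S_K(E,C)$ by Lemma~\ref{lem:preservation-suprema}); each step strictly decreases $|\Sigma|$, so the process terminates in an orthogonal cover. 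That argument is purely order-theoretic, never leaves the algebra, and produces a \emph{coarser} cover by merging. You instead pass to the topological picture, take a common disjoint refinement of the sets $\mathcal Z(\mu_f)$ via Lemma~\ref{lem:disjoint-unions} and Proposition~\ref{prop:basis-of-topology}, and produce a \emph{finer} orthogonal cover; the price is that you must transport the set-theoretic identity $\mathcal Z(\mu_f)=\bigsqcup_{k\in K_f}\mathcal Z(\nu_k)$ back to the identity $\iota(f)=\sum_{k\in K_f}\iota(g_k)$ in $\mathcal S_K(E,C)$. What your route buys is that it essentially anticipates the machinery the paper develops afterwards (orthogonal covers $=$ expanded sets, Lemma~\ref{lem:expanded}, Proposition~\ref{prop:converseExpanded}, Lemma~\ref{lem:form-of bisections}(1)); what the paper's route buys is brevity and independence from the filter space.

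Two points need tightening. First, your ``realization lemma'' is stated for an \emph{arbitrary} disjoint decomposition $\mathcal Z(\mu)=\bigsqcup_k\mathcal Z(\nu_k)$, but your proof (induction on elementary refinement steps) only reaches decompositions that are visibly produced by iterating intersections and the differences of Proposition~\ref{prop:basis-of-topology}(3); showing that an arbitrary disjoint decomposition is of this form is exactly the content of Proposition~\ref{prop:converseExpanded}, which requires its own nontrivial induction. Since the $\nu_k$ in your construction \emph{are} produced by that explicit procedure, your application is sound, but you should restate the lemma for decompositions arising from the refinement process (or record the elementary steps as you go). Second, your translation between covers and set-theoretic identities silently uses that $\mathcal Z(h)\neq\emptyset$ for every nonzero idempotent $h$ (so that $hf=0$ iff $\mathcal Z(h)\cap\mathcal Z(f)=\emptyset$) and that every nonempty basic open subset of $\mathcal Z(\mu_e)$ is $\mathcal Z(h)$ for some nonzero $h\le e$. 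Both are true for adaptable separated graphs (every $\mathcal E$-path extends to an infinite path, and Lemma~\ref{lem:projections1} controls products with $e$), but they should be stated, since the whole equivalence ``cover $\Leftrightarrow$ the union equals $\mathcal Z(\mu_e)$'' rests on them.
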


\begin{proof}
 We proceed by induction on the cardinality of $\Sigma$. If $|\Sigma | = 1$ , then the result is obvious. Assume that $|\Sigma |>1$ and the result holds for finite covers of cardinality less than $|\Sigma |$.
 Suppose that there are $f,g\in \Sigma $ such that $fg\ne 0$. By induction, it suffices to show that there is a finite cover $\Sigma '$ of $\mathcal F _e$, with $|\Sigma ' |<|\Sigma |$, such that
 $\bigvee_{h\in\Sigma} \iota (h) = \bigvee_{h\in \Sigma'} \iota (h)$. By Lemma~\ref{lem:projections1}, we have that either $f$ and $g$ are comparable, or there is a free prime $p$ such that
 $f=\gamma \mathbf m (p) \gamma ^*$ and $g= \gamma \mathbf m'(p) \gamma^*$ for some c-path $\gamma $ and monomials $\mathbf m (p)$ and $\mathbf m'(p)$ at $p$.
 If $f$ and $g$ are comparable, for instance $f\le g$, then we consider the finite cover $\Sigma '= \Sigma \setminus \{ f \}$. In case $f$ and $g$ are not comparable, then set
 $$\Sigma ' := (\Sigma \setminus \{f,g \} ) \cup \{f' \},$$
 where $f'= \gamma (\mathbf m (p) \vee \mathbf m'(p)) \gamma^*$.  By Lemma~\ref{lem:preservation-suprema}, we have that $\iota (f')= \iota (f) \vee \iota (g)$, and thus
 $\vee_{h\in \Sigma '} \iota (h) = \vee _{h\in \Sigma} \iota (h)$, concluding the proof.
 \end{proof}

We now introduce a crucial concept for our construction.

\begin{definition} \label{def:simple-expansion}
{\rm If $\sum e_i$ is a finite orthogonal sum of idempotents $e _i \in \mathcal E$ in $\mathcal
S_K(E,C)$, then a {\bf simple expansion} of $\sum e_i$ consists of another orthogonal sum of the
form $\sum_{z\in Z} e'_z +  \sum _{i\ne i_0} e_i$, for some $i_0$, obtained by applying the rules
described in Remark~\ref{rk:equalities} to $e_{i_0}$; concretely, the idempotents $e'_z\in
\mathcal E$, $z\in Z$, are of one of the following forms:
\begin{enumerate}
 \item If $e_{i_0}= \gamma \prod_{j=1}^{k(p)} \alpha (p,j)^{k_j}( \alpha(p,j)^*)^{k_j} \gamma^*$ for a free prime $p$, then
 \begin{align*}
\sum_{z\in Z} e_z'  = \gamma \Big[ \alpha(p,j_0)^{k_{j_0}+1}&(\alpha(p,j_0)^*)^{k_{j_0}+1} \prod _{j\ne j_0}  \alpha (p,j)^{k_j}(\alpha (p,j)^*)^{k_j}\Big] \gamma^* \\
& +  \sum_{s=1}^{g(p,j_0)} [\gamma \alpha (p,j_0)^{k_{j_0}}\beta(p,j_0,s)\beta (p,j_0,s)^*(\alpha (p,j_0)^*)^{k_{j_0}}  \gamma^* ]
 \end{align*}
for some $1\le j_0\le k(p)$.
\item If $e_{i_0} = \gamma \lambda \lambda^* \gamma^*$ for some path of finite length $\lambda$ in the graph $E_p$, where $p$ is a regular prime, then $Z= s^{-1}(r(\lambda))$, and
$$ e'_z= \gamma \lambda zz^* \lambda^* \gamma ^* \qquad (z\in Z) .$$
\end{enumerate}}
\end{definition}

\begin{definition} \label{def:expansion}
{\rm An expansion of $e\in \mathcal E$ in $\mathcal S_K(E,C)$ is any expression obtained by applying a finite number of simple expansions to the original expression of $e$.  We define the {\bf expanded set} of the given expression as the set
of orthogonal idempotents in $\mathcal E$ that appear in the expansion.}
\end{definition}

\begin{example}
{\rm  Let $e=\gamma\mathbf m(p)\gamma^*$ be a projection in $\mathcal E$, with $p$ a free prime in
$I$, $\gamma$ a c-path to $v^p$ and $\mathbf m(p)=\alpha(p,i)\alpha(p,i)^*$ for some $i\in
\{1,\ldots, k(p)\}$. Then
\begin{align*}
e &= \gamma\alpha(p,i)\alpha(p,i)^*\gamma^* =\gamma\alpha(p,i)v^p\alpha(p,i)^*\gamma^*\\
  &= \gamma\alpha(p,i)\alpha(p,i)\alpha(p,i)^*\alpha(p,i)^*\gamma^*+\gamma\alpha(p,i)\left[\sum_{s=1}^{g(p,i)}\beta(p,i,s)\beta(p,i,s)^*\right]\alpha(p,i)^*\gamma^*\\
  &= \gamma\alpha(p,i)\alpha(p,i)\alpha(p,i)^*\alpha(p,i)^*\gamma^*+\sum_{s=1}^{g(p,i)}[\gamma\alpha(p,i)\beta(p,i,s)\beta(p,i,s)^*\alpha(p,i)^*\gamma^*].
\end{align*}
The latter equation is what we call an {\bf expansion} of $e$. The {\bf expanded set} of the above
expression contains the projections
$\gamma\alpha(p,i)\alpha(p,i)\alpha(p,i)^*\alpha(p,i)^*\gamma^*$ and also each of the projections
$\gamma\alpha(p,i)\beta(p,i,s)\beta(p,i,s)^*\alpha(p,i)^*\gamma^*$.}
\end{example}

A consequence of the above definition is the following.

\begin{lemma}\label{lem:expanded}
If $\Sigma$ is an expanded set of $e\in\mathcal E$, then it is an orthogonal finite cover of $\mathcal F_e$.
\end{lemma}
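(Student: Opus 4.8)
The plan is to induct on the number of simple expansions needed to pass from the trivial expression $\{e\}$ to $\Sigma$, showing at every stage that we have an orthogonal finite cover of $\mathcal F_e$. The base case is immediate: the singleton $\{e\}$ is vacuously orthogonal, it lies in $\mathcal F_e$, and for any $0\neq f\le e$ one has $fe=f\neq 0$, so it covers $\mathcal F_e$. Hence it suffices to prove that a single simple expansion sends an orthogonal finite cover to an orthogonal finite cover. So let $\Sigma=\{e_i\}$ be an orthogonal finite cover, and let $\Sigma'$ be obtained by replacing one member $e_{i_0}$ by the idempotents $\{e'_z:z\in Z\}$ of Definition~\ref{def:simple-expansion}.

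First I would record that each child satisfies $e'_z\le e_{i_0}$: in the free case the child carrying $\alpha(p,j_0)^{k_{j_0}+1}$ has the same c-path $\gamma$ but larger $\alpha(p,j_0)$-exponent, and each child carrying the step $\alpha(p,j_0)^{k_{j_0}}\beta(p,j_0,s)$ has a longer c-path, so $e'_z e_{i_0}=e'_z$ by Lemma~\ref{lem:projections1}; in the regular case $e'_z=\gamma\lambda zz^*\lambda^*\gamma^*$ merely lengthens $\lambda$ by the edge $z$, so again $e'_z\le e_{i_0}$ by Lemma~\ref{lem:projections1}. Since idempotents commute and $e_{i_0}e_i=0$ for $i\ne i_0$ by the inductive hypothesis, this yields $e'_z e_i=e'_z e_{i_0}e_i=0$, so each child is orthogonal to the untouched members of $\Sigma$; moreover $e'_z\le e_{i_0}\le e$, so $\Sigma'\subseteq\mathcal F_e$, and $Z$ is finite in both cases. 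The pairwise orthogonality of the children is where the defining relations enter: in the free case the $\alpha(p,j_0)^{k_{j_0}+1}$-child annihilates each $\beta$-child because $\alpha(p,j_0)^*\beta(p,j_0,s)=0$ by~{\bf\ref{pt:KeyDefs}}(1)(v), and two distinct $\beta$-children annihilate one another because $\beta(p,j_0,s)^*\beta(p,j_0,s')=0$ for $s\ne s'$ by~{\bf\ref{pt:KeyDefs}}(1)(iv); in the regular case distinct children annihilate one another because $z^*z'=0$ for $z\ne z'$ in $s^{-1}(r(\lambda))$ by~{\bf\ref{pt:KeyDefs}}(i)(c). Hence $\Sigma'$ is orthogonal.

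It remains to establish the cover property, which I regard as the crux. The most transparent argument is algebraic: each simple expansion is nothing but the insertion of one of the identities of Remark~\ref{rk:equalities}, so it preserves the sum $\sum_{g\in\Sigma}\iota(g)=\iota(e)$ in $\mathcal S_K(E,C)$ (with base case $\iota(e)=\iota(e)$). Given $0\neq f\in\mathcal F_e$, one then computes $\iota(f)=\iota(f)\iota(e)=\sum_{g\in\Sigma}\iota(fg)$; each $fg$ is either $0$ or an idempotent of $\mathcal E$, so if all of them were $0$ we would get $\iota(f)=0$. The one genuine obstacle is the final implication ``$\iota(f)=0\Rightarrow f=0$'', i.e. the faithfulness of $\iota$, which depends on distinct elements of $S(E,C)$ remaining linearly independent in their $K$-span $\mathcal S_K(E,C)$; this must be available before the step can be used.

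To avoid that dependence I would instead verify the cover property combinatorially for a single simple expansion, using only results already proved. Writing $f=e(\mu_f)$ for the unique $\mathcal E$-path $\mu_f\in\mathcal P$ with $e(\mu_f)=f$ (Definition~\ref{def:familycalP}), the relation $f\le e_{i_0}$ means that the $\mathcal E$-path of $e_{i_0}$ is an initial segment of $\mu_f$ in the sense of Definition~\ref{def:initial-segment}. One then reads off a child meeting $f$ from the shape of $\mu_f$: in the free case $f$ meets the child $a$ carrying $\alpha(p,j_0)^{k_{j_0}+1}$ \emph{unless} $\mu_f$ leaves $v^p$ through a connector $\beta(p,j_0,s)$ at $\alpha(p,j_0)$-exponent exactly $k_{j_0}$, in which case it meets the corresponding $\beta$-child $b_s$; in the regular case the first edge of $\mu_f$ after $\lambda$ names the child $e'_z$ with $f\le e'_z$. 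In each instance $fe'_z\ne 0$ follows directly from Lemma~\ref{lem:projections1}, the only subtlety being that when $\mu_f$ leaves $v^p$ through $\beta(p,i,s)$ with $i\ne j_0$ one still gets $f\le a$ because the $\alpha(p,j_0)$-exponent of $a$ is left unconstrained against such a path. This case analysis, though elementary, is the most laborious part of the proof.
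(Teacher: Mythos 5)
Your overall strategy --- induct on the number of simple expansions and show that a single simple expansion preserves the property of being an orthogonal finite cover --- is sound and close in spirit to the paper's proof, and your verification of orthogonality and of $\Sigma'\subseteq\mathcal F_e$ is correct. You are also right to reject the ``algebraic'' route through $\iota$: faithfulness of $\iota$ on $\mathcal E$ is not available at this stage. However, there is a concrete gap in your cover step. Given $0\ne f\le e$, the inductive hypothesis only provides some $g\in\Sigma$ with $fg\ne 0$; when that $g$ is the expanded element $e_{i_0}$, you immediately write ``the relation $f\le e_{i_0}$'' and read off a child from the shape of $\mu_f$. But $fe_{i_0}\ne 0$ does not imply $f\le e_{i_0}$: by Lemma~\ref{lem:projections1}, besides the two comparable cases one can have $e_{i_0}\le f$, and, at a free prime, $f$ and $e_{i_0}$ can share the same c-path and be incomparable with nonzero meet (e.g.\ $e=v^p$, $e_{i_0}=\alpha(p,1)\alpha(p,1)^*$, $f=\alpha(p,2)\alpha(p,2)^*$). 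In the incomparable case the $\mathcal E$-path of $e_{i_0}$ is \emph{not} an initial segment of $\mu_f$, so your recipe for naming a child does not apply. Both missing cases are easily repaired --- if $e_{i_0}\le f$ then every child is $\le f$; in the incomparable free-prime case the child carrying $\alpha(p,j_0)^{k_{j_0}+1}$ still meets $f$, since by Lemma~\ref{lem:projections1}(3)(a) two idempotents over the same c-path at a free prime never annihilate each other --- but as written the case analysis is incomplete.

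It is worth noting how the paper avoids this issue: its induction carries the strictly stronger invariant that every $0\ne f\le e$ is \emph{comparable} (in one direction or the other) to some member of the expanded set, rather than merely non-orthogonal to it. That dichotomy ($f\le g''$ or $g''\le f$) splits cleanly under a simple expansion, so the awkward ``nonzero but incomparable'' configuration never arises in the inductive step; and the stronger statement is exactly what is reused later in the proof of Proposition~\ref{prop:converseExpanded}. If you strengthen your inductive hypothesis in the same way (or simply add the two missing cases above), your argument goes through.
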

\begin{proof}
If $\Sigma$ is an expanded set of $e\in\mathcal E$, then $\Sigma\subset\mathcal F_e$, and its
elements are pairwise orthogonal. We will show that for any idempotent $f\le e$ and any expanded
set $\Sigma $ of $e$, either $f\le g$ for some $g\in \Sigma $ or $g\le f$ for some $g\in \Sigma$.
We proceed by induction on the number of simple expansions (Definition~\ref{def:simple-expansion})
used to get $\Sigma $ from $e$. If the number of simple expansions is $0$, then $\Sigma = \{ e \}$
and $f\le e$, as desired. Assume the result holds for expansions obtained by using $n-1$ simple
expansions of $e$, for $n\ge 1$, and let $\Sigma $ be an expansion of $e$ obtained by using $n$
simple expansions. Then there exists an expansion $\Sigma '$ obtained from $e$ by using $n-1$
simple expansions and an element $g'$ of $\Sigma '$ such that
$$\Sigma = (\Sigma ' \setminus \{ g' \}) \cup \{g_1,g_2, \dots , g_N \}.$$
Indeed, write $g'= \gamma \mathbf m (p) \gamma^*$ in its standard form. If $p$ is a free prime, then write
$g'= \gamma \prod_{j=1}^{k} \alpha (p,j)^{k_j}(\alpha (p,j)^*)^{k_j} \gamma ^*$ for $k_j\in\Z^+$, and, for some $1\le i\le k(p)$, we have
$$g_1= \gamma \alpha (p,i)^{k_i+1} (\alpha (p,i)^*)^{k_i+1}\prod_{j\ne i} \alpha(p,j)^{k_j}(\alpha (p,j)^*)^{k_j} \gamma ^*$$ and
$$g_{s+1} = \gamma \alpha (p,i)^{k_i} \beta (p,i,s) \beta (p,i,s)^* (\alpha (p,i)^*)^{k_i} \gamma^*,$$
for $1\le s \le g(p,i)$, where $N=g(p,i)+1$. If $p$ is a regular prime, one has
$$g'= \gamma \lambda \lambda^* \gamma^* ,$$
where $\lambda$  is a path of finite length in the graph $E_p$, and
$$g_i = \gamma \lambda e_ie_i^*\lambda ^* \gamma ^*,$$
with $s^{-1}(r(\lambda)) = \{ e_1, \dots , e_N \}$.

By the induction hypothesis, there is some $g''\in \Sigma '$ such that either $f\le g''$ or
$g''\le f$. Now, if $g'\ne g''$, then we take $g=g''$ and either $f\le g$ or $g\le f$. So assume
that $g'=g''$. If $g'\le f$ then clearly $g_i\le f$ for $i=1,2, \dots , N$, so we can take $g$ to
be any of them. Finally assume that $f < g'$. When $p$ is a free prime, this means that $f= \gamma
\gamma ' \mathbf n (p') (\gamma ')^* \gamma^*$, where $\gamma '$ is a c-path from $v^p$ to $w\in
E_{p'}^0$. If $\gamma '$ is a trivial c-path, then $\mathbf n (p')=\prod \alpha(p,j)^{l_j}(\alpha
(p,j)^*)^{l_j}$ and necessarily we have that $l_j\ge k_j$ for all $j$. Now if $l_i\ge k_i+1$, then
we get $f\le g_1$, so we can take $g=g_1$. If $l_i = k_i$, then $g_{s+1}\le f$ for $s=1,\dots,
g(p,i)$, so we can take $g$ to be any of them. Assume now that $\gamma '$ is non-trivial and write
$\gamma ' = \alpha (p,i')^{m} \beta (p, i', s) \cdots $, for some $i'\in \{ 1, \dots , k(p) \}$
and some $s\in \{ 1,\dots, g(p,i') \}$. Since $f\le g'$, we must have $m\ge k_{i'}$. Now if $i\ne
i'$, then we will have $f\le g_1$, and we take $g=g_1$. If $i= i'$ and $m=k_i$, then $f\le
g_{s+1}$ and we take $g=g_{s+1}$. Finally, if $i=i'$ and $m
> k_i$, then $f\leq g_1$ and we take $g=g_1$.

If $p$ is a regular prime, then since $f < g'$, there exists $e_i\in s^{-1}(r(\lambda))$ such that
$f\le \gamma \lambda e_ie_i^*\lambda^*\gamma ^* = g_i$.
\end{proof}

The next result shows that the converse of Lemma~\ref{lem:expanded} also holds.





\begin{proposition}\label{prop:converseExpanded}
Let $e\in\mathcal E$, and let $\Sigma\subset\mathcal F_e$ be an orthogonal finite cover of $\mathcal F_e$. Then, $\Sigma$ is an expanded set of $e$.
\end{proposition}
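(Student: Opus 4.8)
The plan is to argue by strong induction on $|\Sigma|$. At each stage I peel off a single simple expansion of $e$ (Definition~\ref{def:simple-expansion}) with children $\{e'_z\}_{z\in Z}$ that $\Sigma$ \emph{refines}, in the sense that each $g\in\Sigma$ lies below exactly one child $e'_z$. Once this is arranged, $\Sigma_z:=\{g\in\Sigma\mid g\le e'_z\}$ is an orthogonal finite cover of $\mathcal F_{e'_z}$ of strictly smaller cardinality, so by the inductive hypothesis each $\Sigma_z$ is an expanded set of $e'_z$; concatenating these expansions with the first simple expansion then exhibits $\Sigma=\bigsqcup_z\Sigma_z$ as an expanded set of $e$ (Definition~\ref{def:expansion}).

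Several points are routine and I would dispatch them first. If $|\Sigma|=1$, say $\Sigma=\{f\}$, then $f=e$: were $f<e$, Proposition~\ref{prop:basis-of-topology} would furnish an $\mathcal E$-path $\mu$ with $\emptyset\neq\mathcal Z(\mu)\subseteq\mathcal Z(e)\setminus\mathcal Z(f)$, i.e.\ a nonzero $h\le e$ with $hf=0$, contradicting that $\{f\}$ covers $\mathcal F_e$; so $\Sigma=\{e\}$ is the trivial expanded set. The same observation shows that when $|\Sigma|\ge 2$ we must have $e\notin\Sigma$, so every $g\in\Sigma$ satisfies $g<e$. Granting a refining expansion, each $\Sigma_z$ is nonempty (otherwise $e'_z$ meets no element of $\Sigma$, violating the cover property), the children are pairwise orthogonal with $|Z|\ge 2$ (by Definition~\ref{def:adaptable-sepgraphs}(2) in the regular case, and since $g(p,j)\ge 1$ in the free case), and $\Sigma=\bigsqcup_z\Sigma_z$; hence $|\Sigma_z|<|\Sigma|$ and the induction runs. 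That each $\Sigma_z$ is an orthogonal cover of $\mathcal F_{e'_z}$ is immediate from Lemma~\ref{lem:projections1}. In the regular case $e=\gamma\lambda\lambda^*\gamma^*$ the simple expansion over $Z=s^{-1}(r(\lambda))$ is canonical and refinement is automatic: any $g<e$ has $\mu_g$ extending $\lambda$ by a uniquely determined first edge $z_0$, whence $g\le e'_{z_0}$ and $g$ meets no other child.

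The delicate case is a free prime $e=\gamma\prod_{j}\alpha(p,j)^{k_j}(\alpha(p,j)^*)^{k_j}\gamma^*$, where the simple expansion depends on a choice of index, and where the connector children $\gamma\alpha(p,j_0)^{k_{j_0}}\beta(p,j_0,s)\beta(p,j_0,s)^*(\alpha(p,j_0)^*)^{k_{j_0}}\gamma^*$ \emph{forget} the exponents at the other indices. The expansion on $j_0$ fails to be refined by $\Sigma$ precisely when some $g\in\Sigma$ strictly contains one of these level-$k_{j_0}$ connector children. To pick a good index I would isolate the \emph{staying} idempotents of $\Sigma$, namely those of the form $\gamma\prod_j\alpha(p,j)^{l_j}(\alpha(p,j)^*)^{l_j}\gamma^*$ (necessarily with c-path $\gamma$ and no connector, by Lemma~\ref{lem:projections1}). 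Any two staying idempotents below $e$ have nonzero product, their meet being again staying with coordinatewise-maximal exponents; so orthogonality forces $\Sigma$ to contain \emph{at most one} of them. It contains \emph{at least one}, since otherwise every element of $\Sigma$ leaves $v^p$ at some level, these levels are bounded (as $\Sigma$ is finite) by some $M$, and then for $n>M$ the nonzero idempotent $\gamma\prod_j\alpha(p,j)^{k_j+n}(\alpha(p,j)^*)^{k_j+n}\gamma^*\le e$ is orthogonal to every element of $\Sigma$ by Lemma~\ref{lem:projections1}, contradicting the cover property. Let $g_\infty=\gamma\prod_j\alpha(p,j)^{l_j}(\alpha(p,j)^*)^{l_j}\gamma^*$ be this unique staying element; since $g_\infty<e$ there is an index $j_1$ with $l_{j_1}>k_{j_1}$. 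I then claim the expansion on $j_1$ is refined by $\Sigma$: a hypothetical $g\in\Sigma$ strictly containing a level-$k_{j_1}$ connector child must, by Lemma~\ref{lem:projections1} and Definition~\ref{def:initial-segment}, itself be a staying idempotent whose $j_1$-exponent equals $k_{j_1}$; by uniqueness it would be $g_\infty$, contradicting $l_{j_1}>k_{j_1}$. Hence no such $g$ exists and the induction closes.

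The genuine difficulty is concentrated entirely in this free case: one must make precise (via the commutation rules of~{\bf\ref{pt:KeyDefs}}(2) underlying Lemma~\ref{lem:projections1}) that passing through a connector discards the remaining $\alpha$-exponents, and then use the unique staying idempotent $g_\infty$ to locate an index $j_1$ along which $\Sigma$ has already branched past the first connectors. Everything else—the base case, the exclusion $e\notin\Sigma$, the regular-case refinement, and the bookkeeping that the pieces $\Sigma_z$ are proper covers—follows formally from Lemma~\ref{lem:projections1} and Lemma~\ref{lem:expanded}.
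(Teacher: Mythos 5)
Your proof is correct and follows essentially the same strategy as the paper's: both arguments hinge on producing, via the identical bounding argument, the unique element of $\Sigma$ over a free prime that stays at $v^p$ (your $g_\infty$, the paper's $e'$), and both then induct on $|\Sigma|$ using the comparability dichotomy from the proof of Lemma~\ref{lem:expanded}. The only difference is bookkeeping: the paper performs one multi-step expansion descending all the way to $e'$ and recurses only on the complementary connector pieces $\Omega$, whereas you perform a single simple expansion at a well-chosen index $j_1$ and recurse on all children --- a reorganization of the same argument rather than a different route.
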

\begin{proof}
We will proceed by induction on the number of elements
of $\Sigma$. The result is clear if $| \Sigma | = 1$. Assume that  $| \Sigma | >1$ and the result holds for finite orthogonal covers of idempotents in $\mathcal E$ of cardinality less than $|\Sigma |$.

Suppose first that $e=\gamma\prod _{i=1}^{k(p)} \alpha (p,i)^{k_i}(\alpha (p,i)^*)^{k_i}
\gamma^*$, where $p$ is a free prime. We first show that $\Sigma $ must contain some element $e'$
of the form $e' = \gamma\prod _{i=1}^{k(p)} \alpha (p,i)^{l_i}(\alpha (p,i)^*)^{l_i} \gamma^*$,
where $l_i>k_i$ for at least one index $i$. Assume not. Then each $f\in \Sigma $ is of the form
$\gamma \gamma ' \mathbf m (\gamma')^*\gamma^*$, where $\gamma '= \alpha (p, i(f))^{ m(f)} \beta
(p,i(f), s(f))\cdots $ for some non-negative integers $m(f)$ and positive integers $s(f)$. Let
$\Pi = \{ i(f) \mid f\in \Sigma \} \subseteq \{1,\dots ,k(p) \}$  be the set of indices appearing
in the starting terms of the elements of $\Sigma$. For each $i\in \Pi$,  take an integer $M_i$
such that $M_i >m(f)$ for every $f\in \Sigma $ such that $i(f) = i$. Note that $M_i>k_i$ for $i\in
\Pi$. If $i\notin \Pi$, set $M_i= k_i$. Now consider $f\in \mathcal E$ defined by
$$f= \gamma \prod _{i=1}^{k(p)} \alpha (p,i)^{M_i}(\alpha (p,i)^*)^{M_i} \gamma^*.$$
Then $0\ne f\in \mathcal F_e$ and $fg= 0$ for each $g\in \Sigma$,
which is a contradiction. Therefore there is an element $e'$ in
$\Sigma$ of the form described above, and it is necessarily unique,
because $\Sigma $ is orthogonal, and the product of idempotents of
this form is always nonzero. Note that $e'\ne e$ because $|\Sigma
|>1$ and thus there is at least one index $i$ such that $l_i>k_i$.

Now we consider the following expansion of $e$ in $\mathcal
S_K(E,C)$ leading to $e'$:
$$e= e'+ \sum_{k_i<l_i}\sum_{t=0}^{l_i-k_i-1} \sum_{s=1}^{g(p,i)} \alpha (p,i)^{k_i+t}\beta (p,i,s)\beta (p,i,s)^*(\alpha (p,i)^*)^{k_i+t}.$$
Let $$\Omega = \{ \alpha (p,i)^{k_i+t}\beta (p,i,s)\beta (p,i,s)^*(\alpha (p,i)^*)^{k_i+t} \mid
k_i<l_i, 0\le t\le l_i-k_i-1 , 1\le s\le g(p,i) \} $$ and observe that $\Omega \cup \{ e' \}$ is
an expanded set of $e$. The proof of Lemma~\ref{lem:expanded} shows that for any $g\in \Sigma
\setminus \{ e'\}$ there exists $f\in \Omega $ such that either $g\le f$ of $f\le g$. But now if
$f\le g$ for $f\in \Omega $ and $g\in \Sigma \setminus  \{ e' \}$ then necessarily $f=g$. Thus, in
any case, for each $g\in \Sigma \setminus \{ e' \}$ there is $f\in \Omega $ such that $g\le f$.
Since the elements of $\Omega $ are pairwise orthogonal, there is exactly one $f\in \Omega $ such
that $g\le f$.

For $f\in \Omega $, let $\Sigma_f = \{ g\in \Sigma \setminus \{ e' \} \mid g\le f \}$. By the
above argument, $\{ \Sigma_f \mid f\in \Omega \}$ is a partition of $\Sigma \setminus \{ e' \} $,
Moreover, it follows easily that $\Sigma _f$ is an orthogonal finite cover of $f$ for each $f\in
\Omega $. By the induction hypothesis, $\Sigma _f$ is an expanded set of $f$, for $f\in \Omega $.
Hence $\Sigma $ is an expanded set of $e$. This shows the result in the case where $p$ is a free
prime.

Assume now that $p$ is a regular prime, and write $e= \gamma \lambda \lambda ^* \gamma^*$, where
$\gamma $ is a c-path and $\lambda $ is a path of finite length in the graph $E_p$. By a similar
argument as before, there is $e'\in \Sigma $ of the form $e'= \gamma \lambda \lambda'
(\lambda')^*\lambda^* \gamma ^*$, where $\lambda'$ is a path of finite length in $E_p$. We assume
that $\lambda'$ has maximal length, say $r$, amongst all the paths in $E_p$ giving rise to an
element of $\Sigma$. Write $\lambda ' = e_1e_2\cdots e_r$, where $e_i\in E_p^1$. We consider the
following expansion of $e$ in $\mathcal S_K(E,C)$:
$$e= e' +\sum _{i=0}^{r-1} \sum_{ f\ne e_{i+1}, s(f) =s(e_{i+1})} \gamma \lambda e_1\cdots e_i ff^* e_i^*\cdots e_1^* \lambda^* \gamma^* .$$
Let
$$\Omega = \{ \gamma \lambda e_1\cdots e_i ff^* e_i^*\cdots e_1^* \lambda^* \gamma^* \mid i=0,\dots , r-1, f\in E^1, f\ne e_{i+1}, s(f)= s(e_{i+1}) \} .$$
Then $\Omega \cup \{ e' \}$ is an expanded set of $e$. Therefore, by the proof of
Lemma~\ref{lem:expanded}, for any $g\in \Sigma \setminus \{ e'\}$ there exists $h\in \Omega $ such
that either $g\le h$ or $h\le g$. If $h\le g$, where $h= \gamma \lambda e_1\cdots e_i ff^*
e_i^*\cdots e_1^* \lambda^* \gamma^*$ and $h\ne g$, then $g= \gamma \lambda e_1\cdots e_j
e_j^*\cdots e_1^*\lambda^* \gamma^*$ for some $j\le i$, forcing $ge'\ne 0$, and contradicting
irredundancy of $\Sigma$. Thus, in any case, for each $g\in \Sigma \setminus \{ e' \}$ there is
$h\in \Omega $ such that $g\le h$. Now the proof ends exactly as in the case where $p$ is a free
prime.
\end{proof}

Now, we are ready to show that the map $\iota$ is tight.

\begin{theorem}\label{thm:tight}
The map $\iota:S(E,C) \to \mathcal S_K(E,C)$ is tight.
\end{theorem}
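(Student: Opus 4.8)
The plan is to verify the defining condition of tightness from Definition~\ref{def:tight} directly: given an idempotent $e\in\mathcal E$ and a finite cover $Z$ of $\mathcal F_e$, I must show that $\iota(e)=\bigvee_{z\in Z}\iota(z)$ in the Boolean algebra of idempotents of $A_{\mathcal E}$. The strategy is to reduce an arbitrary cover to an orthogonal one, identify that orthogonal cover as an expanded set of $e$, and then exploit the fact that an expanded set realizes $\iota(e)$ as an honest orthogonal \emph{sum} in $\mathcal S_K(E,C)$, so that this sum coincides with the join.

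First I would apply Proposition~\ref{prop:imageiota} to replace $Z$ by an orthogonal finite cover $\widehat Z$ of $\mathcal F_e$ satisfying $\bigvee_{z\in Z}\iota(z)=\bigvee_{g\in\widehat Z}\iota(g)$; this immediately reduces the problem to orthogonal covers. Next I would invoke Proposition~\ref{prop:converseExpanded}, which tells me that every orthogonal finite cover of $\mathcal F_e$---in particular $\widehat Z$---is an expanded set of $e$ in the sense of Definition~\ref{def:expansion}.

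The remaining point is to read off what being an expanded set buys us at the level of the algebra. By Definitions~\ref{def:simple-expansion}~and~\ref{def:expansion}, $\widehat Z$ is obtained from the singleton $\{e\}$ by finitely many simple expansions, each of which merely rewrites one idempotent using the identities in Remark~\ref{rk:equalities}(a),(b) (that is, $v=\sum_{f\in s^{-1}(v)}ff^*$ at a regular vertex, and $v^p=\alpha(p,i)\alpha(p,i)^*+\sum_s\beta(p,i,s)\beta(p,i,s)^*$ at a free prime), inserted between the relevant path segments. Since these are genuine relations in $\mathcal S_K(E,C)$, each simple expansion preserves the total sum, and therefore $\iota(e)=\sum_{g\in\widehat Z}\iota(g)$ holds in $\mathcal S_K(E,C)$. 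Because the members of $\widehat Z$ are pairwise orthogonal idempotents, their sum equals their join, so $\sum_{g\in\widehat Z}\iota(g)=\bigvee_{g\in\widehat Z}\iota(g)$. Chaining the three equalities yields $\iota(e)=\bigvee_{g\in\widehat Z}\iota(g)=\bigvee_{z\in Z}\iota(z)$, which is exactly tightness.

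I expect essentially no serious obstacle, since the hard combinatorial work is already contained in Propositions~\ref{prop:imageiota} and~\ref{prop:converseExpanded}. The only point requiring care is the passage from an orthogonal sum in $\mathcal S_K(E,C)$ to a join in the Boolean algebra of idempotents of $A_{\mathcal E}$, which rests on the elementary identity $g_1\vee\cdots\vee g_n=g_1+\cdots+g_n$ valid for pairwise orthogonal idempotents, together with the observation that each simple expansion from Definition~\ref{def:simple-expansion} preserves the ambient sum. Both are routine, so the body of the proof is little more than the assembly of the cited results in the order above.
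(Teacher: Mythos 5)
Your proposal is correct and follows exactly the paper's own argument: reduce to an orthogonal cover via Proposition~\ref{prop:imageiota}, identify it as an expanded set via Proposition~\ref{prop:converseExpanded}, observe that expansions preserve the sum in $\mathcal S_K(E,C)$ so that $\iota(e)=\sum_{g\in\widehat\Sigma}\iota(g)$, and conclude since an orthogonal sum of idempotents equals their join. No gaps.
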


\begin{proof}
Let $e\in\mathcal E$, and $\Sigma\subset \mathcal F_e$ be a finite cover. Then, by
Proposition~\ref{prop:imageiota}, there exists an orthogonal finite cover $\widehat\Sigma$ of
$\mathcal F_e$ such that $$\bigvee_{f\in\Sigma}\iota(f)=\bigvee_{g\in\widehat\Sigma}\iota(g).$$ By
Proposition~\ref{prop:converseExpanded}, $\widehat\Sigma$ is an expanded set of an expression of $e$
in $\mathcal S_K(E,C)$, whence $$\iota(e)=\sum_{g\in\widehat\Sigma}\iota(g)\quad \text{ in }\quad
\mathcal S_K(E,C).$$ Because the elements in $\widehat\Sigma$ are pairwise orthogonal,
$$\sum_{g\in\widehat\Sigma}\iota(g)=\bigvee_{g\in\widehat\Sigma}\iota(g).$$ Hence
\[
\iota(e)=\sum_{g\in\widehat\Sigma}\iota(g)=\bigvee_{g\in\widehat\Sigma}\iota(g)=\bigvee_{f\in\Sigma}\iota(f).\qedhere
\]
\end{proof}


\subsection{Universality of the Representation} In this subsection, we will show that the tight representation $\iota$ of $S(E,C)$ described above is universal for tight representations.
\begin{theorem}\label{thm:Universal}
The map $\iota:S(E,C) \to \mathcal S_K(E,C)$ is universal for tight representations of $S(E,C)$ on $*$-algebras over $K$.
\end{theorem}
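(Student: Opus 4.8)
The plan is to exploit the fact that $\mathcal{S}_K(E,C)$ is, \emph{by definition}, the universal $*$-algebra generated by $E^0\cup E^1\cup\{(t_i^v)^{\pm 1}\}$ subject to \emph{all} the relations of~{\bf\ref{pt:KeyDefs}}, whereas $S(E,C)$ is the universal $*$-semigroup subject to all of those relations \emph{except} the two additive ones, namely~{\bf\ref{pt:KeyDefs}}(ii)(d) and~{\bf\ref{pt:KeyDefs}}(1)(ii). Given a tight representation $\phi\colon S(E,C)\to B$ into a $*$-algebra $B$, I would define $\tilde\phi$ on generators by $\tilde\phi(x)=\phi(x)$. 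By the universal property of a presentation by generators and relations, to obtain a $*$-homomorphism $\tilde\phi\colon\mathcal{S}_K(E,C)\to B$ with $\tilde\phi\circ\iota=\phi$ it suffices to check that the images $\phi(x)$ satisfy every defining relation of $\mathcal{S}_K(E,C)$; uniqueness is then automatic, since $\mathcal{S}_K(E,C)$ is generated as a $*$-algebra by $\iota(E^0\cup E^1\cup\{(t_i^v)^{\pm 1}\})$, on which $\tilde\phi$ is forced to equal $\phi$.

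These relations split into two groups. The purely multiplicative and $*$-relations are exactly the defining relations of the $*$-semigroup $S(E,C)$, and so are automatically preserved by $\phi$, which is a $*$-semigroup homomorphism. The only relations requiring genuine verification are the two additive ones, which are precisely the identities recorded in Remark~\ref{rk:equalities}: for a regular prime $p$ and $v\in E_p^0$, the relation $v=\sum_{e\in s^{-1}(v)}ee^*$; and for a free prime $p$, the relation $v^p=\alpha(p,i)\alpha(p,i)^*+\sum_{s=1}^{g(p,i)}\beta(p,i,s)\beta(p,i,s)^*$.

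For each of these I would argue as follows. The relevant family $\Sigma$ of idempotents --- the set $\{ee^*:e\in s^{-1}(v)\}$ in the regular case, and $\{\alpha(p,i)\alpha(p,i)^*\}\cup\{\beta(p,i,s)\beta(p,i,s)^*:1\le s\le g(p,i)\}$ in the free case --- is obtained from $v$ (respectively $v^p$) by a single simple expansion in the sense of Definition~\ref{def:simple-expansion}, and is therefore an expanded set of $v$ (resp.\ $v^p$). By Lemma~\ref{lem:expanded}, $\Sigma$ is thus an orthogonal finite cover of $\mathcal F_v$ (resp.\ $\mathcal F_{v^p}$). Since $\phi$ is tight, $\phi(v)=\bigvee_{f\in\Sigma}\phi(f)$. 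Moreover the elements of $\Sigma$ are pairwise orthogonal, which follows from relation~{\bf\ref{pt:KeyDefs}}(ii)(c), $e^*f=\delta_{e,f}r(e)$, since in each case the relevant edges lie in a single partition class of $C$. Hence their images are pairwise orthogonal commuting idempotents in $B$, and the join coincides with the sum, $\bigvee_{f\in\Sigma}\phi(f)=\sum_{f\in\Sigma}\phi(f)$, which is exactly the desired additive identity in $B$.

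Having checked that all the defining relations of $\mathcal{S}_K(E,C)$ hold for the images $\phi(x)$, the presentation yields the required factorisation $\tilde\phi$. The one point demanding care --- and the main obstacle --- is the observation that each additive relation is governed by the cover $\Sigma$ produced by a single simple expansion and that this $\Sigma$ is orthogonal, so that tightness converts the join into the precise sum with no residual meet terms; once this is in place, everything else is bookkeeping distinguishing which relations are semigroup relations (preserved for free) from the two genuinely additive ones.
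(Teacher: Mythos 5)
Your proposal is correct and follows essentially the same route as the paper: the multiplicative relations hold automatically because $\phi$ is a $*$-semigroup representation, and the two additive relations of Remark~\ref{rk:equalities} are recovered by applying tightness to the orthogonal finite covers produced by a single simple expansion (via Lemma~\ref{lem:expanded}), with orthogonality turning the join into a sum. The paper's proof is just a terser version of the same argument.
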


\begin{proof}
Let $A$ be a $*$-algebra, $\phi: S(E,C) \to A$ be a tight representation and denote by
$\overline{x}$ the images of the elements of $S(E,C)$ under $\phi$. Since $\phi$ is tight,
Definition~\ref{def:tight} and Lemma~\ref{lem:expanded} show that
\begin{enumerate}
\item for each $p\in I_{reg}$ and each $\in E^0_p$, $$\ol{v}=\sum_{e\in s^{-1}(v)}\ol{e} \cdot
    \ol{e}^*,\text{ and}$$
\item for each $p\in I_{free}$ and each $1\leq i\leq k(p)$,
    $$\ol{v^p}=\ol{\alpha(p,i)} \cdot \ol{\alpha(p,i)}^*+\sum^{g(p,i)}_{s=1}\ol{\beta(p,i,s)}\cdot \ol{\beta(p,i,s)}^*.$$
\end{enumerate}
All other relations in $\mathcal S_K(E,C)$ are preserved because $\phi$ is a representation. Thus
there exists a unique $K$-algebra $*$-homomorphism $\Phi:\mathcal S_K(E,C)\to A$ such that
$\Phi(\iota(x))=\phi(x)$ for each generator $x$ of $S(E,C)$.
\end{proof}

Using Theorem~\ref{thm:Universal} and the results of Section~\ref{sect:Steinbergalges}, we obtain
an isomorphism between the $K$-algebra $\mathcal S_K(E,C)$ and the Steinberg algebra $A_K
(\mathcal G_{tight} (S(E,C)))$.

\begin{theorem} \label{thm:Steinbergalgebra-and-our-algebra}
Let $(E,C)$ be an adaptable separated graph, let $S(E,C)$ be the inverse semigroup associated to
$(E,C)$, let $K$ be a field with involution and let $\mathcal S_K(E,C)$ be the $*$-algebra over
$K$ associated to $(E,C)$. Let $A_K (\mathcal G _{tight} (S(E,C)))$ be the Steinberg algebra of
the tight groupoid $\mathcal G_{tight} (S(E,C))$. There is a $*$-isomorphism
\[
\mathcal S_K(E,C) \cong A_K(\mathcal G_{tight} (S(E,C)))
\]
sending $\iota (s)\in \mathcal S_K(E,C)$ to $1_{\Theta (s, D_{s^*s})}$ for each $s\in S(E,C)$.
\end{theorem}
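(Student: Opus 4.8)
The plan is to recognize both $\mathcal{S}_K(E,C)$ and $A_K(\mathcal{G}_{tight}(S(E,C)))$ as universal $*$-algebras for tight representations of the inverse semigroup $S(E,C)$, and then to invoke the uniqueness of such universal objects. First I would observe that $S(E,C)$ is an $E^*$-unitary inverse semigroup by Proposition~\ref{prop:E-star-unitary-general}, and hence is Hausdorff in the sense required by Steinberg, as recorded in the discussion following Theorem~\ref{thm:steinberg16}. This allows me to apply Theorem~\ref{thm:steinberg16}, which identifies $A_K(\mathcal{G}_{tight}(S(E,C)))$ as the universal $*$-algebra for tight representations of $S(E,C)$; the accompanying universal tight representation is the canonical map $\pi\colon S(E,C)\to A_K(\mathcal{G}_{tight}(S(E,C)))$ given by $\pi(s)=1_{\Theta(s,D_{s^*s})}$.

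Next I would recall that Theorem~\ref{thm:Universal} establishes that $\iota\colon S(E,C)\to\mathcal{S}_K(E,C)$ is likewise universal for tight representations of $S(E,C)$ on $*$-algebras over $K$. Since a universal tight $*$-algebra is unique up to $*$-isomorphism, by the standard argument noted just before Theorem~\ref{thm:steinberg16}, the two universal properties combine as follows. Applying the universal property of $\iota$ to the tight representation $\pi$ produces a unique $*$-homomorphism $\Psi\colon\mathcal{S}_K(E,C)\to A_K(\mathcal{G}_{tight}(S(E,C)))$ with $\Psi\circ\iota=\pi$, while applying the universal property of $\pi$ to the tight representation $\iota$ produces a unique $*$-homomorphism $\Phi$ in the opposite direction with $\Phi\circ\pi=\iota$. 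The composite $\Phi\circ\Psi$ then satisfies $(\Phi\circ\Psi)\circ\iota=\iota$, so by the uniqueness clause of the universal property of $\iota$ it must equal $\id_{\mathcal{S}_K(E,C)}$; symmetrically $\Psi\circ\Phi=\id$. Hence $\Psi$ is a $*$-isomorphism, and by construction it sends $\iota(s)$ to $\pi(s)=1_{\Theta(s,D_{s^*s})}$, which is exactly the asserted formula.

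The only point requiring genuine verification, rather than formal bookkeeping, is the identification of the canonical tight representation in Steinberg's theorem with the concrete map $s\mapsto 1_{\Theta(s,D_{s^*s})}$. This rests on the ample structure of $\mathcal{G}_{tight}(S(E,C))$ developed in Section~\ref{sect:Steinbergalges}: each $\Theta(s,D_{s^*s})$ is a compact open bisection, the products obey $1_{\Theta(s,D_{s^*s})}1_{\Theta(t,D_{t^*t})}=1_{\Theta(st,D_{(st)^*st})}$, and the involution is respected, so that $\pi$ is indeed a representation; its tightness and universality are precisely the content of Theorem~\ref{thm:steinberg16}. I expect this matching of the canonical representation with the explicit bisection functions to be the main, though essentially routine, obstacle, with everything else following formally from the uniqueness of universal objects.
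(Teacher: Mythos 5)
Your proposal is correct and follows essentially the same route as the paper: the paper's proof likewise notes that $S(E,C)$ is $E^*$-unitary (hence Hausdorff as an inverse semigroup) and then deduces the isomorphism from Theorem~\ref{thm:steinberg16} together with Theorems \ref{thm:tight}~and~\ref{thm:Universal}, i.e.\ by matching the two universal tight representations. Your more explicit unwinding of the uniqueness argument is just a fuller account of the same step; do make sure you cite Theorem~\ref{thm:tight} explicitly, since the tightness of $\iota$ is needed before its universal property can be played off against Steinberg's.
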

\begin{proof}
Since $S(E,C)$ is $E^*$-unitary (Proposition~\ref{prop:E-star-unitary-general}), it is a Hausdorff
inverse semigroup. Hence the result follows from Theorem~\ref{thm:steinberg16} and Theorems
\ref{thm:tight}~and~\ref{thm:Universal}.
\end{proof}

\begin{corollary}\label{cor:Groupoid-C*-algebra-and-our-algebra}
Let $(E,C)$ be an adaptable separated graph,  $S(E,C)$ be its associated inverse semigroup, and $C^*(S(E, C))$ be the universal $C^*$-algebra defined by generators $E^0\cup E^1\cup \{
(t_i^v)^{\pm 1} : i\in \N,  v\in E^0 \}$  and the relations~{\bf\ref{pt:KeyDefs}}. Then there is an
isomorphism
\[
C^*(S(E,C)) \cong C^*(\mathcal G_{tight} (S(E,C)))
\]
sending $\iota(s)\in C^*(S(E,C))$ to $1_{\Theta (s, D_{s^*s})}$ for each $s\in S(E,C)$.
\end{corollary}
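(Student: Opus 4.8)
The plan is to follow the proof of Theorem~\ref{thm:Steinbergalgebra-and-our-algebra} line by line, substituting for Theorem~\ref{thm:steinberg16} its $C^*$-algebraic counterpart. Recall from Exel's work \cite{ExelBraz} that, for an inverse semigroup $S$ with zero whose tight groupoid is Hausdorff, the full groupoid $C^*$-algebra $C^*(\mathcal{G}_{tight}(S))$ is universal among $C^*$-algebras receiving a tight representation of $S$, the universal tight representation being $s \mapsto 1_{\Theta(s,D_{s^*s})}$. Since $S(E,C)$ is $E^*$-unitary by Proposition~\ref{prop:E-star-unitary-general}, its tight groupoid is Hausdorff, so this applies in our setting. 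The corollary will therefore follow once we verify that $C^*(S(E,C))$, together with the canonical representation $\iota$, enjoys the very same universal property.

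First I would check that $\iota \colon S(E,C) \to C^*(S(E,C))$ is a tight representation. The crucial observation is that tightness in the sense of Definition~\ref{def:tight} is an identity in the commutative algebra generated by the projections $\iota(\mathcal{E})$, where a join of commuting projections is computed as $e\vee f = e + f - ef$; it is thus a purely algebraic condition, insensitive to whether the ambient algebra is a bare $*$-algebra or a $C^*$-algebra. Consequently Lemma~\ref{lem:preservation-suprema}, Propositions~\ref{prop:imageiota} and~\ref{prop:converseExpanded}, and the argument of Theorem~\ref{thm:tight} all transcribe verbatim, their only inputs being the expansion identities of Remark~\ref{rk:equalities}, which hold in $C^*(S(E,C))$ by construction. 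Hence $\iota$ is tight.

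Next I would establish the $C^*$-analogue of Theorem~\ref{thm:Universal}. Given a $C^*$-algebra $B$ and a tight representation $\phi \colon S(E,C) \to B$, the two computations in the proof of Theorem~\ref{thm:Universal} show that the images $\phi(x)$ satisfy the expansion relations of Remark~\ref{rk:equalities}, while the remaining relations of~{\bf\ref{pt:KeyDefs}} are automatic since $\phi$ is a $*$-representation of the semigroup. As $C^*(S(E,C))$ is by definition the universal $C^*$-algebra on these generators and relations, there is a unique $*$-homomorphism $\Phi \colon C^*(S(E,C)) \to B$ with $\Phi\circ\iota = \phi$. Thus $(C^*(S(E,C)), \iota)$ is universal for tight representations of $S(E,C)$ on $C^*$-algebras, and comparing universal properties yields the desired $*$-isomorphism $C^*(S(E,C)) \cong C^*(\mathcal{G}_{tight}(S(E,C)))$ carrying $\iota(s)$ to $1_{\Theta(s,D_{s^*s})}$.

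The only point where genuinely $C^*$-algebraic content enters, rather than a transcription of the Steinberg-algebra arguments, is the appeal to Exel's identification of the universal tight $C^*$-algebra with the full groupoid $C^*$-algebra; I expect this, together with keeping straight that the relevant groupoid algebra is the \emph{full} one, to be the main (and only mild) obstacle. I would also record that, by amenability of $\mathcal{G}_{tight}(S(E,C))$ (Proposition~\ref{prop:amenable}), the full and reduced groupoid $C^*$-algebras coincide, so that the isomorphism may equally be stated for $C^*_r(\mathcal{G}_{tight}(S(E,C)))$.
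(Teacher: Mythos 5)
Your argument is correct, but it takes a genuinely different route from the paper's. The paper's proof is essentially two lines: it observes (by adapting \cite[Theorem~3.2.2 and Lemma~3.2.3]{SimsCRMNotes}) that $C^*(\mathcal G_{tight}(S(E,C)))$ is the universal $C^*$-algebra for $*$-representations of the Steinberg algebra $A_{\mathbb C}(\mathcal G_{tight}(S(E,C)))$, notes that $C^*(S(E,C))$ is by definition the universal $C^*$-algebra for $*$-representations of $\mathcal S_{\mathbb C}(E,C)$, and then concludes directly from the $*$-isomorphism $\mathcal S_{\mathbb C}(E,C)\cong A_{\mathbb C}(\mathcal G_{tight}(S(E,C)))$ of Theorem~\ref{thm:Steinbergalgebra-and-our-algebra}: isomorphic $*$-algebras have isomorphic enveloping $C^*$-algebras. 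You instead re-run the whole tight-representation machinery at the $C^*$-level --- transcribing Lemma~\ref{lem:preservation-suprema}, Propositions~\ref{prop:imageiota} and~\ref{prop:converseExpanded}, and Theorems \ref{thm:tight}~and~\ref{thm:Universal} --- and then invoke Exel's identification of $C^*(\mathcal G_{tight}(S))$ with the universal tight $C^*$-algebra of $S$. Your route is self-contained and makes the tight universal property of $C^*(S(E,C))$ explicit, which is of independent interest, but it duplicates work already done over an arbitrary field and leans on Exel's theorem (whose universal property is stated for Hilbert-space representations, so you should note that the extension to representations in arbitrary $C^*$-algebras is obtained by composing with a faithful representation; Hausdorffness of the tight groupoid, which you correctly derive from $E^*$-unitarity, keeps this appeal safe). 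The paper's route buys brevity by factoring everything through the already-established algebraic isomorphism; yours buys a direct $C^*$-level universal property without passing through Steinberg algebras. Your closing remark that amenability lets one replace the full by the reduced $C^*$-algebra is consistent with the paper.
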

\begin{proof}
The proofs of \cite[Theorem~3.2.2 and Lemma~3.2.3]{SimsCRMNotes} are easily modified to show that
$C^*(\mathcal G_{tight}(S(E, C))$ is universal for $^*$-representations of
$A_{\mathbb{C}}(\mathcal{G}_{tight}(S(E, C)))$. By definition, $C^*(S(E, C))$ is universal for
$^*$-representations of $\mathcal{S}_{\mathbb{C}}(E,C)$. So the result follows from
Theorem~\ref{thm:Steinbergalgebra-and-our-algebra}.
\end{proof}

\section{A new description of the groupoid \texorpdfstring{$\mathcal G_{tight}(S(E,C))$}{Gtight(S(E,C))}}\label{sect:description-groupoid}

In this section, we obtain a more concrete description of the tight groupoid $\mathcal
G_{tight}(S(E,C))$ defined in the last section, paralleling the well-known description of graph
groupoids (see, for example, \cite{CFST}).

To this end, let $\widehat{\mathcal E}_{\infty}$ be the space of infinite paths, described above (see
Section~\ref{sect:filters}), and consider the additive group $\Gamma := \Z^{(\infty )}\times
\Z^{(\infty)}$. Here, the first copy of $\Z^{(\infty )}$ will encode the exponents of the
indeterminates $t^v_i$ and the second copy of $\Z^{(\infty )}$ will encode the difference of
lengths between initial segments of infinite paths.

Our new approach to $\mathcal G_{tight}(S(E,C))$ is based on understanding the groupoid we
subsequently define. First, if $\gamma$ is an initial segment ending in $E_p$, we define
$|\gamma|_\infty\in \Z^{(\infty )}$ as follows: if $p$ is a regular, then $|\gamma|_{\infty} :=
(|\gamma |, 0,0, \dots, )\in \Z^{(\infty )}$; and, if $p$ is a free prime, then, expressing
$\gamma  = \gamma ' \prod _{j=1}^{k(p)} \alpha (p,j)^{l_j}$ where $\gamma '$ is a c-path and
$l_j\in \Z^+$, then $|\gamma |_{\infty} := (l_1+ |\gamma '|, \dots, l_{k(p)}+ |\gamma ' |,
0,0,\dots )$.

Now consider the set $\mathcal H$ of triples $( x , n , y) $ such that
\begin{enumerate}[(i)]
\item $x$ and $y$ are infinite paths ending at the same prime $p$ of $I$; say $x= \gamma \lambda
    $, and $y= \nu \lambda$, where $\gamma $ and $\nu$ are initial segments ending in $E_p$, and
    $\lambda $ is an infinite path in $E_p$ (see Definitions
    \ref{def:semfinite-path}~and~\ref{def:initial-segment}); and
\item $n= (n_1, |\gamma|_{\infty} - |\nu|_ {\infty}) \in \Gamma$ for some $n_1 \in
    \Z^{(\infty)}$.
\end{enumerate}

We define $\mathcal{H}^{(2)} = \{((w, m, x), (y, n, z)) \in \mathcal{H} \times \mathcal{H} : x =
y\}$, and we define multiplication from $\mathcal{H}^{(2)}$ to $\mathcal{H}$ by
$$(x,n,y)(y,m,z)= (x,n+m,z).$$
Then $\mathcal{H}$ is a groupoid, with inverses given by $(x,n,y)^{-1} = (y,-n,x)$, and units
$\mathcal H ^{(0)} = \{(x, 0, x) : x \in \widehat{\mathcal E}_{\infty}\}$ identified with
$\widehat{\mathcal E}_{\infty}$.

We now introduce a topology in $\mathcal H$. A basis of this topology is indexed by the elements in the inverse semigroup $S(E,C)$.

\begin{definition}\label{Def 7.1}
{\rm Given $s \in S(E,C)$, written in the standard notation as  $s = \gamma \mathbf m (p) \nu^*$,
for c-paths $\gamma ,\nu$ and monomial $ \mathbf m (p)$ at $p\in I$, define $\mathcal Z (s)$ as
the set of elements $  (x,n,y) \in \mathcal H$ which satisfy the following conditions:
\begin{enumerate}
 \item If $p$ is a regular prime, then $\mathbf m (p) = \prod (t_i^v)^{d_i} \lambda \eta^* $,
     for paths of finite length $\lambda, \eta $ in $E_p$, with $s(\lambda ) =v\in E_p^0$ and
     $r(\lambda ) =r(\eta)$. In this case, $x= \gamma \lambda x_0$ and $y= \nu \eta x_0$, for an
     infinite path $x_0= \gamma ' \lambda'$  starting at $r(\lambda)$, where $\gamma '$ is a
     c-path and $\lambda'$ is an infinite path in the component corresponding to $r(\gamma ')$.
     Moreover, $n=(n_1,n_2)$, where $n_1\in \Z^{(\infty )}$ is the sequence of exponents of
     $\phi_{\gamma '}(\mathbf m (p))$  and $n_2 = | \gamma \lambda \gamma '|_{\infty} -
     |\nu\eta\gamma' |_{\infty} \in \Z^{(\infty )}$. (Observe that $n_2$ is determined by the
     integer $|\gamma | +|\lambda |-|\nu | - |\eta |$).
 \item If $p$ is a free prime, then $\mathbf m (p)=   \prod (t_i^{v^p})^{d_i} \prod_{j=1}^{k(p)}
     \alpha (p,j)^{k_j}(\alpha (p,j)^*)^{l_j}$. In this case, either $x= \gamma \prod
     _{j=1}^{k(p)} \alpha (p,j)^{\infty}$,  $y= \nu \prod_{j=1}^{\infty} \alpha(p,j)^{\infty}$,
     and $n=(n_1,n_2)$, with $n_1= (d_i)_{i\in \N}$, $n_2= (k_1 + |\gamma | -l_1-|\nu|, \dots ,
     k_{k(p)} + |\gamma | -l_{k(p)} - |\nu|, 0,\dots )$, or there exists an infinite path
     $\gamma ' \lambda '$, where $\gamma '= \alpha (p,i)^m \beta (p,i,s) \cdots $, $1\le i \le
     k(p)$  is a non-trivial c-path and $\lambda '$ is an infinite path in the component
     corresponding to $r(\gamma ')$, such that
 $$x= \gamma \alpha (p,i)^{k_i} \gamma' \lambda ', \quad y= \nu \alpha (p,i)^{l_i} \gamma ' \lambda ' ,$$
 and $n=(n_1,n_2)$, with $n_1$ equal to the sequence of exponents of $\phi_{\gamma'} ( \mathbf m
 (p))$ and $n_2= |\gamma \alpha (p,i)^{k_i}\gamma '|_{\infty} - |\nu \alpha(p,i)^{l_i} \gamma
 '|_{\infty}\in \Z^{(\infty )}$. (Note that $n_2$ is determined by the integer $ |\gamma | -
 |\nu | + k_i - l_i$).
 \end{enumerate}
 }
 \end{definition}

 \begin{remark}
{\rm  Notice that when $e=\gamma \mathbf m(p)\gamma^*$ is an idempotent in $S(E,C)$, then
 \begin{enumerate}[\rm (a)]
 \item if $p$ is a free prime, then $\mathbf m (p) = \prod_{j=1}^{k(p)} \alpha
     (p,j)^{k_j}(\alpha (p,j)^*)^{k_j}$, and  $\mathcal Z (e)$ is the set of elements
     $(x,(0,0),x)\in \mathcal H$ such that $\lambda \prod_{j=1}^{k(p)} \alpha(p,j)^{k_j}$ is an
     initial segment of $x$; and
 \item if $p$ is a regular prime, then $\mathbf m (p) = \lambda \lambda ^*$ for a path of finite length $\lambda $ in the graph $E_p$, and $\mathcal Z (e)$
 is the set of elements $(x,(0,0),x)\in \mathcal H$ such that $\gamma \lambda $ is an initial segment of $x$.
 \end{enumerate}
 Therefore, using the identification of ultrafilters with infinite paths given in Corollary~\ref{cor:ultrafilters}, we notice that $\mathcal Z (e)$ may be identified
 with the set $\mathcal Z(e)$ of Definition~\ref{def:familycalP}.}
 \end{remark}

 We now establish the isomorphism between the groupoid of germs of the action of $S(E,C)$ on $\widehat{\mathcal E}_{\infty}$ and the groupoid $\mathcal H$.

\begin{theorem}
 \label{thm:iso-groupoids}
 Let $(E,C)$ be an adaptable separated graph. Then there is a canonical isomorphism
  $$\phi \colon \mathcal G_{tight}(S(E,C)) \to \mathcal H$$
 between the topological groupoid $\mathcal G_{tight}(S(E,C))$ and
 the topological groupoid $\mathcal H$ defined above.
 \end{theorem}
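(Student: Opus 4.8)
The plan is to build $\phi$ directly on germs and to show that it carries the basic open set $\Theta(s, D_{s^*s})$ of $\mathcal G_{tight}(S(E,C))$ bijectively onto the set $\mathcal Z(s)$ of Definition~\ref{Def 7.1}. Recall (Corollary~\ref{cor:ultrafilters}, Theorem~\ref{thm:tightfilters}) that the tight filters are exactly the infinite paths, so a germ has the form $[s,x]$ with $s = \gamma\mathbf m(p)\nu^*$ in standard form and $x$ an infinite path in $D_{s^*s}$. I would set
\[
\phi([s,x]) := (\alpha_s(x),\, n,\, x),
\]
where, factorising $x$ in the manner forced by the idempotent $s^*s$, the infinite path $\alpha_s(x)$ and the element $n = (n_1, n_2)\in\Gamma$ are read off exactly as in Definition~\ref{Def 7.1}: the coordinate $n_1\in\Z^{(\infty)}$ is the sequence of exponents of the translation part $\phi_{\gamma'}(\mathbf m(p))$, while $n_2\in\Z^{(\infty)}$ records the length difference $|\gamma\lambda\gamma'|_\infty - |\nu\eta\gamma'|_\infty$ (with the evident analogue in the free case). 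The matching of ranges and sources built into Definition~\ref{Def 7.1} guarantees $(\alpha_s(x),n,x)\in\mathcal H$, and by construction $\phi(\Theta(s,D_{s^*s})) = \mathcal Z(s)$.

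First I would check that $\phi$ is well defined on germs, which I expect to be the main obstacle. If $(s,x)\sim(s',x)$, i.e.\ $se = s'e$ for some idempotent $e\in\mathcal E$ with $x\in D_e$, one must show that the triple $(\alpha_s(x),n,x)$ is unaffected. This amounts to proving that $n$ descends to a well-defined cocycle $\mathcal G_{tight}(S(E,C))\to\Gamma$. The essential inputs are that $\phi_\eta(\mathbf m(p))$ depends only on $\mathbf m(p)$ and on the connectors occurring in $\eta$ (as recorded after Definition~\ref{lem:def-phi-reg-case}), and that $|\cdot|_\infty$ is additive under concatenation; together these force the two coordinates of $n$ computed from $s$ and from $s'$ to agree. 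The delicate part is the bookkeeping of the loop exponents $k_i, l_i$ and of the auxiliary variables $t^v_i$, carried out separately for $p$ free and $p$ regular.

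Next I would verify that $\phi$ is a groupoid homomorphism. Since composition in $\mathcal H$ is $(x,n,y)(y,m,z) = (x,n+m,z)$ and $[s,\alpha_t(y)]\,[t,y] = [st,y]$ in the groupoid of germs, this reduces to additivity of the cocycle, $n(st) = n(s) + n(t)$ when the product is defined; the $n_1$-part follows from the identity $\phi_{\gamma'}(\mathbf m(p)\,\phi_{\eta'}(\mathbf n(p'))) = \phi_{\gamma'}(\mathbf m(p))\,\phi_{\gamma''}(\mathbf n(p'))$ established in the proof that $S$ is a $*$-semigroup, and the $n_2$-part from additivity of $|\cdot|_\infty$. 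For bijectivity I would exhibit the inverse directly: given $(x,n,y)\in\mathcal H$, decompose $x = \gamma\lambda$ and $y = \nu\lambda$ along their common tail and assemble an $s = \gamma\mathbf m(p)\nu^*$ whose monomial exponents realise $n$, precisely as catalogued in Definition~\ref{Def 7.1}, so that $\phi([s,y]) = (x,n,y)$; this gives surjectivity, and a check that the resulting germ is independent of the choices (using that $S(E,C)$ is $E^*$-unitary, Proposition~\ref{prop:E-star-unitary-general}) yields a two-sided inverse, hence injectivity.

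Finally, for the topological statement I would note that $\phi$ identifies the basis $\{\Theta(s,D_{s^*s})\}$ of $\mathcal G_{tight}(S(E,C))$ with the basis $\{\mathcal Z(s)\}$ of $\mathcal H$, and that restricting the source to an open subset on either side corresponds across $\phi$; since both families are bases, $\phi$ and $\phi^{-1}$ are continuous, so $\phi$ is a homeomorphism and, being a functor, an isomorphism of topological groupoids.
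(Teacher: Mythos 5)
Your proposal is correct and follows essentially the same route as the paper: define $\phi([s,x]) = (\alpha_s(x), n, x)$ with $n$ read off from the data in Definition~\ref{Def 7.1}, verify well-definedness and the homomorphism property via the behaviour of the translation parts $\phi_\eta(\mathbf m(p))$ and the additivity of $|\cdot|_\infty$, establish bijectivity by producing the germ realising a given triple, and identify the bases $\{\Theta(s, \mathcal Z(s^*s))\}$ and $\{\mathcal Z(s)\}$ for the homeomorphism. The only cosmetic differences are that the paper checks injectivity by directly constructing an idempotent $f$ with $sf = tf$ (no appeal to $E^*$-unitarity is needed) and spells out why the sets $\Theta(s, D_{s^*s})$ generate the topology, namely by decomposing a general $\Theta(s,U)$ as $\bigcup_f \Theta(sf, \mathcal Z(f))$ using Corollary~\ref{cor:compact-opens}.
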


\begin{proof}
 We start by observing that the elements of $\mathcal G_{tight}(S(E,C))$ can be represented by classes $[s,x]$, where $(s,x)$ is of a special form. Recall that $x$ is a point in the space $\widehat{\mathcal E}_{\infty}$, and that $s$ is an element in $S(E,C)$ such that $s^*s\in x$ (interpreting $x$ as a subset of $\mathcal E$). Let us describe the structure of $x$ by cases:

 \begin{enumerate}
 \item Assume first that $x= \nu \lambda$, where $\nu $ is a c-path and $\lambda $ is an
     infinite path in the component of a regular prime $p$. Then, we can assume that $s= \gamma
     (\prod (t^v_i)^{d_i})  \lambda ' (\lambda'')^* \nu^*$, where $\gamma$ is a c-path,
     $r(\gamma ) = s(\lambda ') = v\in E_p^0$, $r(\lambda ')= r(\lambda '')$ and $\lambda ''$ is
     an initial segment of $\lambda$, that is, $\lambda = \lambda '' x_0$ for an infinite path
     $x_0$. In this case, we set
 $$\phi ([s,x]) = ( \gamma \lambda' x_0, ((d_i)_{i\in \N}, (|\gamma | + |\lambda '|-|\lambda''|- |\nu|,0,\dots )), \nu\lambda''x_0 ) \in \mathcal H .$$
 \item Assume now that $x= \nu  \prod_{j=1}^{k(p)} \alpha (p,j)^{\infty}$ for a free prime $p$,
     where $\nu $ is a c-path ending at $v^p$. Then, $s$ can be chosen of the form
\begin{equation}\label{eqn:s}
s = \gamma \Big(\prod (t^{v^p}_i)^{d_i}\Big)\prod_{j=1}^{k(p)} \alpha (p,j)^{k_j}(\alpha (p,j)^*)^{l_j} \nu^*,
\end{equation}
where $\gamma $ is a c-path ending at $v^p$. For an element $[s,x]$ of this form, we define
$$\phi ([s,x])= \Big(\gamma \prod_{j=1}^{k(p)} \alpha (p,j)^{\infty}, ((d_i)_{i\in \N}, (|\gamma | + k_j-l_j -|\nu |)_j), \nu \prod_{j=1}^{k(p)} \alpha (p,j)^{\infty}\Big)\in \mathcal H.$$
 \end{enumerate}

In both cases (1)~and~(2), we can achieve the representative of the desired form by replacing the
given element $[s,x]$ with a suitable element of the form $[sf,x]$, where $f\in \mathcal E$.

Notice that $\phi$ is a well-defined map. Indeed, if $[s,x]=[t,y]$, then $x=y$ and there exists
$f\in x$ (thinking $x$ as a subset of $\widehat{\mathcal{E}}_{\infty}$) such that $sf=tf$. In the free
case, we can assume that $f=\nu \prod \alpha (p,j)^{t_j} (\alpha(p,j)^*)^{t_j} \nu^*$, where $t_j$
is bigger or equal than the corresponding powers of $\alpha (p,j)^*$ in $s$ and $t$. Hence, the
elements $[s,x]$ and $[t,x]$ have the same image under $\phi$. The regular case is analogous.

It is straightforward to check that $\phi$ is an algebraic morphism of groupoids. Moreover, by the
definition of $\mathcal H$, $\phi$ is surjective. Let us check that $\phi$ is injective. To this
end, let us firstly work the free prime case out. Assume that $x= \nu  \prod_{j=1}^{k(p)} \alpha
(p,j)^{\infty}$ for a free prime $p$ and, fixing notation, that $s, t \in S(E,C)$,  with $s$ as
in~(\ref{eqn:s}) and
$$t = \gamma' \Big(\prod (t^{v^p}_i)^{d_i'}\Big)\prod_{j=1}^{k(p)} \alpha (p,j)^{k_j'}(\alpha (p,j)^*)^{l_j'} \nu^*.$$
If $\phi ([s,x])= \phi ([t,x])$, then $\gamma = \gamma '$, $d_i=d_i'$ for all $i$ and $k_j-l_j= k_j'-l_j'$ for all $1\le j\le k(p)$. But then $k_j+l_j'= k_j'+l_j$ for all $j$, and
so the element $f= \nu \prod_{j=1}^{k(p)} \alpha (p,j)^{k_j+l_j'}(\alpha (p,j)^*)^{k_j'+l_j} \nu^*$ is a idempotent in $S(E,C)$ such that $f\in x$ and $sf=tf$, which implies that $[s,x]= [t,x]$.
The case where $x$ is an infinite path ending in a regular prime is left to the reader. Therefore, we conclude that $\phi$ is a bijective isomorphism of algebraic groupoids.

It remains only to show that the map $\phi $ is a homeomorphism. Recall that the topology of
$\mathcal G : = \mathcal G_{tight}(S(E,C))$ is generated by the sets
$$ \Theta (s,U) := \{ [s,x] \in \mathcal G  \mid x\in U \},$$
where $U$ is an open subset of $\widehat{\mathcal E}_{\infty}$ such that $U\subseteq D_{s^*s} := \{
x\in \widehat{\mathcal E}_{\infty} \mid s^*s\in x \}$. Given such an open set $U$, we know from
Corollary~\ref{cor:compact-opens} that $U= \bigcup_{f\in Z_U} \mathcal Z (f)$, where $Z_U$ is the
set of idempotents $f\in \mathcal E$ such that $\mathcal Z (f) \subseteq U$, and thus
$$\Theta (s, U) = \bigcup_{f\in Z_U} \Theta (sf, \mathcal Z (f)).$$
Therefore, the topology of $\mathcal G$ is generated by the open sets
$$U_s : = \Theta (s, \mathcal Z (s^*s)) =  \{ [s,x] \mid s^*s\in x \},$$
for $s\in S$. It is straightforward to check that $\phi (U_s)= \mathcal Z (s)$ for each $s\in S(E,C)$. This shows that $\phi$ is a homeomorphism, and the proof is complete.
\end{proof}

Summing up, we obtain the following result.

\begin{theorem}
 \label{thm:gordo1}
 Let $(E,C)$ be an adaptable separated graph, and $S(E,C)$ be the inverse semigroup associated to $(E,C)$.
 Let $\mathcal G_{tight}(S(E,C))$ be the groupoid of germs associated to the canonical action of $S(E,C)$ on the space of ultrafilters $\widehat{\mathcal E}_{\infty}$, and let $\mathcal H$ be the groupoid
 defined above. Then $\mathcal G_{tight}(S(E,C)) \cong \mathcal H$ is an ample Hausdorff \'etale topological groupoid.
\end{theorem}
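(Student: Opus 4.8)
The plan is to assemble this theorem from pieces already in hand rather than to prove anything substantially new; Theorem~\ref{thm:gordo1} is a summary statement. First I would record the identifications that make the two descriptions of the unit space compatible: by Theorem~\ref{thm:tightfilters} the space $\widehat{\mathcal E}_{tight}$ of tight filters coincides with the space $\widehat{\mathcal E}_{\infty}$ of ultrafilters, and by Corollary~\ref{cor:ultrafilters} the latter is identified with the set of infinite paths in $(E,C)$. This legitimises working throughout with the action of $S(E,C)$ on $\widehat{\mathcal E}_{\infty}$, so that the groupoid of germs $\mathcal G_{tight}(S(E,C))$ is exactly the object to which Theorem~\ref{thm:iso-groupoids} applies. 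The isomorphism $\mathcal G_{tight}(S(E,C)) \cong \mathcal H$ of topological groupoids is then precisely the content of Theorem~\ref{thm:iso-groupoids}, so nothing further is needed for that clause.

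It remains only to justify the three adjectives. For the Hausdorff property I would cite Proposition~\ref{prop:E-star-unitary-general} together with \cite[Propositions 6.2 and 6.4]{ExelBraz}: since $S(E,C)$ is $E^*$-unitary, its tight groupoid is Hausdorff (this is exactly the Corollary stated immediately after Proposition~\ref{prop:E-star-unitary-general}). For ampleness and the \'etale property I would invoke the general construction reviewed in Section~\ref{sect:Steinbergalges}, where it is noted that the groupoid of germs of an action of an inverse semigroup with zero on the space of tight filters is always an ample, and hence \'etale, groupoid. The required basis of compact open bisections is furnished by the sets $\Theta(s, \mathcal Z(e))$, whose compactness ultimately rests on Corollary~\ref{cor:compact-opens}, asserting that $\widehat{\mathcal E}_{\infty}$ admits a basis of compact open subsets $\{\mathcal Z(\mu)\}_{\mu \in \mathcal P}$. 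Transporting each of these properties across the isomorphism $\phi$ of Theorem~\ref{thm:iso-groupoids} then shows that $\mathcal H$ itself is an ample Hausdorff \'etale groupoid.

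I do not anticipate a genuine obstacle at this stage, precisely because the delicate analytic and combinatorial work has already been discharged: the hard points were the $E^*$-unitarity (Proposition~\ref{prop:E-star-unitary-general}), the coincidence of tight filters with ultrafilters (Theorem~\ref{thm:tightfilters}), the compact-open basis of $\widehat{\mathcal E}_{\infty}$ (Corollary~\ref{cor:compact-opens}), and the explicit groupoid isomorphism (Theorem~\ref{thm:iso-groupoids}). The only care required is bookkeeping, namely checking that the identification of $\widehat{\mathcal E}_{tight}$ with infinite paths is used consistently between the abstract germ picture and the concrete model $\mathcal H$, so that the structural properties transfer without ambiguity.
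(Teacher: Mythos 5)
Your proposal is correct and follows essentially the same route as the paper's own proof: the isomorphism is exactly Theorem~\ref{thm:iso-groupoids}, Hausdorffness comes from $E^*$-unitarity (Proposition~\ref{prop:E-star-unitary-general}) via \cite[Propositions 6.4 and 6.2]{ExelBraz}, the \'etale property from the general groupoid-of-germs construction (\cite[Proposition 4.17]{ExelBraz}), and ampleness from the observation that the sets $U_s = \Theta(s, \mathcal Z(s^*s))$ form a basis of compact open bisections. The paper's proof is exactly this assembly of previously established facts, so nothing further is needed.
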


\begin{proof}
 By Theorem~\ref{thm:iso-groupoids}, there is an isomorphism of topological groupoids $\mathcal G_{tight}(S(E,C)) \cong \mathcal H$. By \cite[Proposition 4.17]{ExelBraz}, the groupoid
 $\mathcal G_{tight}(S(E,C))$ is an \'etale topological groupoid. Since $S(E,C)$ is $E^*$-unitary by Proposition~\ref{prop:E-star-unitary-general}, it follows from \cite[Propositions 6.4 and 6.2]{ExelBraz}
 that $\mathcal G_{tight}(S(E,C))$ is Hausdorff. Finally, observe that $U_s = \Theta (s, \mathcal Z (s^*s))$, $s\in S(E,C)$, is a basis of compact open bisections for the topology of $\mathcal G_{tight}(S(E,C))$.
 \end{proof}

\section{Amenability of \texorpdfstring{$\mathcal G_{tight}(S(E,C))$}{Gtight(S(E,C))}}\label{Sect:Amenability}

In this section we show that the groupoid $\Gg_{tight}(S(E,C))$ associated to any adaptable
separated graph is amenable (Proposition~\ref{prop:amenable}). Thus the full and reduced
$C^*$-algebras of this groupoid coincide, are nuclear, and belong to the UCT class. In particular,
Corollary~\ref{cor:Groupoid-C*-algebra-and-our-algebra} shows that $C_r^*(\mathcal G_{tight}(S(E,
C))$ is universal for tight representations of $S(E, C)$, so $C^*(S(E, C))$ has a faithful
representation on $\bigoplus_{x \in \widehat{\mathcal{E}}^\infty} \ell^2(G_{tight}(S(E, C))_x)$.
These results open the possibility of addressing the realisation problem via nuclear
$C^*$-algebras in the UCT class.
%
%

The first step is to reduce the question to dealing with the groupoid sitting over any prime $p \in I$, because this significantly simplifies the question. To do this, we will
use the following general lemma, together with the fact that, since $I$ is finite, the
groupoid admits a finite ``composition series." In the following result, given a poset $\Oo$ and an element $V \in \Oo$ we will say that
$U \in \Oo$ is \emph{subjacent} to $V$ if $U$ is a maximal element of the set $\{W \in
\Oo \setminus \{V\}: W \preceq V\}$.

\begin{lemma}\label{lem:composition series}
    Let $\Gg$ be a second-countable Hausdorff \'etale groupoid. Suppose that $\Oo$ is a
    finite collection of open invariant subsets of $\Gg^{(0)}$ that is closed under
    intersections. Suppose that $\Gg^{(0)}$ and $\emptyset$ both belong to $\Oo$, and regard
    $\Oo$ as a poset under $\subseteq$. Suppose that, whenever $U$ is subjacent to $V$ in
    $\Oo$, the groupoid $\Gg|_{V \setminus U}$ is amenable. Then, $\Gg$ is amenable.
\end{lemma}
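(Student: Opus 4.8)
The plan is to induct along a maximal chain in the finite poset $\Oo$, invoking at each step the standard stability of amenability under extensions by open invariant subsets. The tool I would use is: if $\Hh$ is a second-countable Hausdorff \'etale groupoid, $W\subseteq \ho$ is open and invariant, and $F=\ho\setminus W$ is its (closed, invariant) complement, then $\Hh$ is amenable provided both restrictions $\Hh|_{W}$ and $\Hh|_{F}$ are amenable; this is the extension direction of the characterisation of amenability via invariant subsets (Anantharaman-Delaroche--Renault). First I would record that, for any invariant $A\subseteq\go$, the restriction $\Gg|_A=\{g\in\Gg: s(g)\in A\}$ is again a second-countable Hausdorff \'etale groupoid with unit space $A$ (open restrictions are open \'etale subgroupoids, closed restrictions are closed \'etale subgroupoids, and both properties pass to subspaces), so the tool applies to every subgroupoid appearing below.

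Next I would build a maximal chain
\[
    \emptyset=U_0\subsetneq U_1\subsetneq\cdots\subsetneq U_n=\go
\]
in $\Oo$, which exists because $\Oo$ is finite and contains both $\emptyset$ and $\go$: starting at $\go$ one repeatedly selects a maximal element of the members of $\Oo$ lying strictly below the current one, terminating at $\emptyset$. By maximality of the chain, no member of $\Oo$ lies strictly between $U_{i-1}$ and $U_i$, which is precisely the assertion that $U_{i-1}$ is subjacent to $U_i$; hence each stratum groupoid $\Gg|_{U_i\setminus U_{i-1}}$ is amenable by hypothesis. Since each $U_i$ is open and invariant, inside the unit space $U_i$ of $\Gg|_{U_i}$ the set $U_{i-1}$ is open and invariant with closed invariant complement $U_i\setminus U_{i-1}$, and invariance of the $U_i$ gives the identifications $(\Gg|_{U_i})|_{U_{i-1}}=\Gg|_{U_{i-1}}$ and $(\Gg|_{U_i})|_{U_i\setminus U_{i-1}}=\Gg|_{U_i\setminus U_{i-1}}$.

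Finally I would prove by induction on $i$ that $\Gg|_{U_i}$ is amenable. The base case $\Gg|_{U_0}=\Gg|_{\emptyset}$ is the empty groupoid, which is amenable. For the inductive step, assuming $\Gg|_{U_{i-1}}$ amenable, I would apply the extension tool to $\Hh=\Gg|_{U_i}$ with $W=U_{i-1}$ and $F=U_i\setminus U_{i-1}$: the restriction to $W$ is $\Gg|_{U_{i-1}}$, amenable by the inductive hypothesis, and the restriction to $F$ is $\Gg|_{U_i\setminus U_{i-1}}$, amenable by the subjacency hypothesis, so $\Gg|_{U_i}$ is amenable. Taking $i=n$ then yields that $\Gg=\Gg|_{\go}$ is amenable, as required. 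The only substantive ingredient is the extension property of amenability for a pair consisting of an open invariant set and its closed complement; the chain combinatorics is routine, and the sole point requiring care is verifying that the two restrictions of $\Gg|_{U_i}$ coincide with the groupoids named in the hypothesis, which follows immediately from the invariance of the $U_i$.
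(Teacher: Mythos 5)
Your proof is correct and is essentially the same as the paper's: both arguments reduce to iterated application of the extension property of amenability (open invariant subset plus closed complement) along a chain of subjacent pairs running from $\emptyset$ up to $\Gg^{(0)}$. The paper phrases this as an induction on $|\Oo|$, peeling off a maximal element $V$ of $\Oo\setminus\{\Gg^{(0)}\}$ and restricting the poset to $\{W\in\Oo: W\subseteq V\}$, whereas you unroll that recursion into an explicitly chosen maximal chain --- a cosmetic difference that, if anything, spares you the check that the hypotheses descend to the sub-poset.
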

\begin{proof}
    We proceed by induction on $|\Oo|$. First, suppose that $|\Oo| = 1$. Since $\Gg^{(0)}$ and
    $\emptyset$ both belong to $\Oo$, we deduce that $\Gg^{(0)} = \emptyset$, and so $\Gg =
    \emptyset$ is trivially amenable.

    Now fix $n \ge 1$. Suppose as an inductive hypothesis that the result holds for groupoids
    in which $|\Oo| \le n$, and suppose that $|\Oo| = n+1$. Let $V$ be a maximal element of
    $\Oo \setminus \{\Gg^{(0)}\}$. By hypothesis, $\Gg|_{\Gg^{(0)} \setminus V}$ is amenable.
    Let $\Oo' := \{U \cap V : U \in \Oo\}$. Since $\Oo$ is closed under intersections, we
    have $\Oo' = \{W \in \Oo : W \subseteq V\} \subseteq \Oo$, and in particular if $U$ is
    subjacent to $U'$ in $\Oo'$ then $U$ is subjacent to $U'$ in $\Oo$. So $\Oo'$ is a finite
    collection of open invariant subsets of $\Gg|_V^{(0)}$ that is closed under intersections
    and contains $\Gg|_V^{(0)}$ and $\emptyset$, and whenever $U$ is subjacent to $U'$ in
    $\Oo'$ the groupoid $(\Gg|_V)_{U' \setminus U} = \Gg|_{U' \setminus U}$ is amenable.
    Since $V \cap V = \Gg^{(0)} \cap V$, we see that $|\Oo'| \le |\Oo| - 1 = n$. So the
    inductive hypothesis implies that $\Gg|_V$ is amenable. Thus
    \cite[Proposition~9.82]{WilliamsGpds} shows that $\Gg$ is amenable.
\end{proof}

\begin{proposition}\label{prop:amenable}
    Let $(E,C)$ be an adaptable separated graph. The
    groupoid $\Gg_{tight}(S(E,C))$ is amenable.
\end{proposition}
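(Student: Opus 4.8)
The plan is to apply Lemma~\ref{lem:composition series} to the groupoid $\Gg := \Gg_{tight}(S(E,C)) \cong \mathcal H$, using the finite poset $I$ to organize a composition series of open invariant sets. Let me think about what the invariant subsets should be and what the subquotients look like.

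Let me think carefully about the structure. The unit space is $\widehat{\mathcal E}_\infty$, the space of infinite paths. Each infinite path ends at some prime $p \in I$ — meaning it eventually stays within the component $E_p$ (either looping via the $\alpha(p,j)$ for free $p$, or as an infinite path in the row-finite graph $E_p$ for regular $p$). So there is a natural map $\widehat{\mathcal E}_\infty \to I$ sending each infinite path to its terminal prime. Since the groupoid $\mathcal H$ only relates paths ending at the same prime, and since an orbit can move a path $y = \nu\lambda$ to $x = \gamma\lambda$ with the same infinite tail $\lambda$, the terminal prime is invariant along orbits.

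**What the invariant sets are.** For a downward-closed (hereditary) subset $H \subseteq I$ (i.e. $q \le p$ and $p \in H$ implies $q \in H$), I would set $U_H := \{x \in \widehat{\mathcal E}_\infty : \text{every prime visited by } x \text{ lies in } H\}$. The point is that the groupoid action can only shorten or lengthen a path's initial segment while fixing the infinite tail; since the path-way order forces connectors to point downward ($q < p$), the set of primes visited by $x$ is an order-theoretic invariant of the orbit. One checks $U_H$ is open (it is a union of cylinder sets $\mathcal Z(\mu)$ whose associated $\mathcal E$-paths only visit primes in $H$) and invariant. Taking $\Oo := \{U_H : H \subseteq I \text{ hereditary}\} \cup \{\emptyset, \widehat{\mathcal E}_\infty\}$ gives a finite collection closed under intersection (intersection of hereditary sets is hereditary), containing both $\emptyset = U_\emptyset$ and $\widehat{\mathcal E}_\infty = U_I$.

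**The subquotients.** If $U_{H'}$ is subjacent to $U_H$ in $\Oo$, then $H = H' \sqcup \{p\}$ for a single prime $p$ maximal in $H$, and the difference $U_H \setminus U_{H'}$ consists exactly of the infinite paths whose terminal prime is $p$. I expect this is where the real work lies: identifying $\Gg|_{U_H \setminus U_{H'}}$ concretely and proving it amenable. For $p$ regular, the restricted groupoid should look like a graph groupoid of $E_p$ (twisted by the $\Z^{(\infty)}$-valued cocycle recording the $t_i^v$ exponents and length differences), which is amenable because graph groupoids of row-finite graphs are amenable and amenability is preserved under the relevant semidirect-product / skew-product structure by the abelian group $\Gamma$. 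For $p$ free, the terminal behaviour is the free abelian monoid generated by the loops $\alpha(p,j)$, and the restricted groupoid should be (a skew product by $\Gamma$ of) a transformation groupoid attached to a $\Z^{k(p)}$-like action, again amenable. In both cases the key general principle is that a groupoid that is a skew product $\Gg' \times_c \Gamma$ by an amenable (here abelian) group $\Gamma$ over an amenable base is amenable; I would invoke the standard permanence results (e.g. \cite[Proposition~9.82]{WilliamsGpds} together with amenability of group bundles/skew products).

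**Main obstacle.** The hard part will be pinning down the precise groupoid model for each slice $\Gg|_{U_H \setminus U_{H'}}$ and verifying amenability there cleanly, rather than the bookkeeping of the composition series, which Lemma~\ref{lem:composition series} handles formally once the $\Oo$ is in place. Concretely, one must show that on paths terminating at a fixed prime $p$, the cocycle $n = (n_1, n_2) \in \Gamma$ decouples into an honest continuous cocycle into the abelian group $\Z^{(\infty)}\times\Z^{(\infty)}$, so that $\Gg|_{U_H\setminus U_{H'}}$ is a skew product of an \emph{amenable} equivalence-type groupoid (a graph groupoid, or an AF/transformation groupoid) by $\Gamma$. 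Once that identification is secured, amenability of the slice follows from amenability of graph groupoids of row-finite graphs together with closure of amenability under skew products by abelian groups, and then Lemma~\ref{lem:composition series} finishes the proof.
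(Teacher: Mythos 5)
Your overall strategy coincides with the paper's: filter the unit space by a finite lattice of open invariant sets indexed by hereditary subsets of $I$, invoke Lemma~\ref{lem:composition series}, and treat each single-prime slice via a cocycle into an abelian group reducing to a row-finite graph groupoid or to $\ZZ^{k(q)}$. But your definition of the invariant sets is incorrect, and the justification you give for their invariance is false. In $\Hh$ a unit $x=\gamma\lambda$ is in the same orbit as $\nu\lambda$ for \emph{any} initial segment $\nu$ ending at $s(\lambda)$, and such a $\nu$ may begin at an arbitrary prime above $p$ and pass through primes that $\gamma$ never meets. Hence the set of primes visited by $x$ (equivalently, since this set is a decreasing chain and $H$ is downward closed, the prime at which $x$ \emph{starts}) is not an orbit invariant; only the terminal prime is. With your $U_H$ one can have $x\in U_H$ and $y\notin U_H$ in the same orbit, so $U_H$ is not invariant and Lemma~\ref{lem:composition series} does not apply to your $\Oo$. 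The fix is what the paper does: let $W_P$ be the set of infinite paths whose \emph{terminal} prime lies in the hereditary set $P$; this is open (any cylinder $\mathcal Z(\mu)$ with $r(\mu)$ in a component of $P$ is contained in $W_P$, since $P$ is downward closed) and invariant, and then $W_Q\setminus W_P$ really is the set of paths terminating at the single prime $q$, as you asserted for your difference sets.

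The second problem is that you defer the identification of the slice groupoids, which is where the remaining content lies. The restriction of $\Hh$ to the paths terminating at $q$ is \emph{not} a graph groupoid twisted by a cocycle: its units carry arbitrary initial segments descending from higher primes into $E_q$. The paper removes these by noting that $X_q$, the set of infinite paths lying entirely in $E_q$, is a \emph{full} open subset of the unit space of the slice, so the slice is groupoid-equivalent to $\Hh|_{X_q}$ by \cite[Theorem~3.10]{FKPS}, and amenability passes through equivalence \cite[Theorem 2.2.17]{A-DR}. Only after this reduction does the cocycle $c\colon(\gamma\lambda,(n_1,\,\cdot\,),\nu\lambda)\mapsto n_1$ into $\ZZ^{(\infty)}$ apply cleanly: $c^{-1}(0)$ is the graph groupoid $\Gg_{E_q}$ (amenable by \cite[Corollary~5.5]{KPRR}) when $q$ is regular, and is isomorphic to $\ZZ^{k(q)}$ when $q$ is free, so \cite[Proposition~9.3]{Spielberg} gives amenability of $\Hh|_{X_q}$. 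Your appeal to ``skew products by $\Gamma$'' is morally this cocycle argument, but without the equivalence step it is being applied to the wrong groupoid, and no substitute for that step is offered.
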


\begin{proof}
    Let $\Hh$ be the groupoid, isomorphic to $\Gg_{tight}(S(E,C))$, described in Section~\ref{sect:description-groupoid}.
    We say that a subset $P \subseteq I$ is \emph{hereditary} if $p
    \in P$ and $p \ge q$ implies $q \in P$. For each hereditary $P \subseteq I$, the set
    \begin{align*}\textstyle
        W_P := \bigcup_{p \in P} \{\gamma\lambda \in \Hh^{(0)} : \gamma &\text{ is an initial segment}, r(\gamma)\in E_p^0\text{ and }\lambda\in E_p^{\infty}\}
    \end{align*}
    is an open invariant subset of $\Hh^{(0)}$. Clearly, the collection $\Oo = \{W_P : P
    \subseteq I\text{ is hereditary}\}$ is a finite collection of open invariant sets that
    contains $\emptyset = W_\emptyset$ and $\Hh^{(0)} = W_I$, and is closed under
    intersections since $W_P \cap W_Q = W_{P \cap Q}$.

    If $P, Q$ are hereditary subsets of $I$, then $W_P \subseteq W_Q$ if and only if $P
    \subseteq Q$. Suppose that $U,V \in \Oo$, and that $U$ is subjacent to $V$ in $\Oo$.
    Then $V = W_Q$ and $U = W_P$ for some nested pair $P
    \subsetneq Q$ of hereditary subsets of $I$. Since $Q \setminus P$ is nonempty, we deduce that
    it contains a maximal element $q \in Q$. Hence $Q' = Q \setminus \{q\}$ is a proper
    hereditary subset of $Q$ containing $P$. Thus $Q' = P$, forcing $Q \setminus P = \{q\}$.
    So by Lemma~\ref{lem:composition series} it suffices to show that if $P,Q$ are hereditary subsets of $I$, with $P\subset Q$, and $Q
    \setminus P= \{ q \}$  is a singleton, then $\Hh|_{W_Q \setminus W_P}$ is amenable.

    Fix such a $P, Q$ and $q$. Then,
    \begin{align*}
        W_Q \setminus W_P = \{\gamma\lambda \in \Hh^{(0)} : \gamma &\text{ is an initial segment}, r(\gamma)\in E_q^0\text{ and }\lambda\in E_q^{\infty}\}.
    \end{align*}
    The subset $X_q := \{\lambda \in \Hh^{(0)} : \lambda\text{ is an infinite path in }E_q\}$
    is a full open subset of the unit space of $\Hh|_{W_Q \setminus W_P}$. Hence, the final
    statement of \cite[Theorem~3.10]{FKPS} shows that $\Hh|_{W_Q \setminus W_P}$ is
    equivalent in the sense of \cite[Definition~2.1]{MRW} (see also
    \cite[Section~3]{Renault}) to the groupoid $\Hh|_{X_q}$. Since amenability is preserved
    by groupoid equivalence \cite[Theorem 2.2.17]{A-DR}, it therefore suffices to show that
    each $\Hh|_{X_q}$ is amenable.

    By definition, we have $\Hh|_{X_q} = \{(\gamma\lambda, (n_1, |\gamma|_\infty -
    |\nu|_\infty), \nu\lambda) : \gamma,\nu \in E_q^*, \lambda \in E_q^\infty, n_1 \in
    \ZZ^{(\infty)}\}$, and the map $c : (\gamma\lambda, (n_1, |\gamma|_\infty -
    |\nu|_\infty), \nu\lambda) \mapsto n_1$ is a continuous cocycle from $\Hh|_{X_q}$ to the
    countable abelian group $\ZZ^{(\infty)}$. Thus \cite[Proposition~9.3]{Spielberg} implies
    that $\Hh|_{X_q}$ is amenable provided that $c^{-1}(0)$ is amenable. We have $c^{-1}(0)
    \cong \{(\gamma\lambda, |\gamma|_\infty - |\nu|_\infty, \nu\lambda) : \gamma,\nu \in
    E_q^*, \lambda \in E_q^\infty\}$. If $q$ is a regular prime, this is precisely the graph
    groupoid $\Gg_{E_q}$, which is amenable by \cite[Corollary~5.5]{KPRR}; and if $q$ is a
    free prime, then $\gamma$ and $\nu$ are finite products of the $\alpha(q,j)$, and the map
    $(\gamma\lambda, |\gamma|_\infty - |\nu|_\infty, \nu\lambda) \mapsto |\gamma|_\infty -
    |\nu|_\infty$ is an isomorphism of $\Hh|_{X_q}$ onto $\ZZ^{k(q)}$, which again is amenable.
    So in either case $\Hh|_{X_q}$ is amenable, completing the proof.
\end{proof}

\section{The type semigroup}\label{Sect:TypeSemigroup}
\subsection{The type semigroup of a Boolean inverse semigroup}

Let $S$ be an inverse semigroup (always with $0$). We denote by $\mathcal{E}(S)$ the semilattice of idempotents of $S$.
We say that $x,y\in S$ are orthogonal, written $x\perp y$ if $x^*y= yx^*= 0$

Recall that a {\it Boolean inverse semigroup} is an inverse semigroup $S$ such that
$\mathcal{E}(S)$ is a Boolean ring (a ring with $x=x^2$ for all $x$), and such that every pair
$x,y \in S$ satisfying $x \perp y$ has a supremum, denoted $x\oplus y \in S$ (see
\cite[Definition~3-1.6]{Weh} for further details). For the following definition, we refer to
\cite{Weh} for ease of accessibility; but it did not originate there (see for example
\cite{Wallis, KLLR, LS}).

\begin{definition}
 \label{def:type-semigroup-BIS} \cite[page 98]{Weh}
Let $S$ be a Boolean inverse semigroup. The {\it type semigroup} (or type monoid) of $S$ is the
commutative monoid $\Typ (S)$ freely generated by elements $\typ (x)$, where $x\in
\mathcal{E}(S)$, subject to the relations
\begin{enumerate}
 \item $\typ (0) = 0$,
 \item $\typ (x) = \typ (y)$ whenever $x,y\in \mathcal{E}(S)$ and there is $s\in S$ such that $ss^*= x $ and $s^*s= y$.
\item $\typ (x\oplus y) = \typ (x) + \typ (y)$ whenever $x,y$ are orthogonal elements in $\mathcal{E}(S)$.
  \end{enumerate}
\end{definition}

By \cite[Corollary 4-1.4]{Weh}, $\Typ (S)$ is a conical refinement monoid.

\subsection{The type semigroup of an ample groupoid}

Let $\mathcal G$ be a (not-necessarily-Hausdorff) \'etale groupoid, with a Hausdorff locally
compact unit space $X:= \mathcal G^{(0)}$. Then the collection $S(\mathcal G)$ of all compact open
bisections of $\mathcal G$ forms a Boolean inverse semigroup. Indeed, it is well-known that it is
an inverse semigroup, and since the topology on $X$ is Hausdorff, one obtains that the semilattice
of idempotents of $S(\mathcal G)$, which coincides with the set of all the compact open subsets of
$X$, is a Boolean ring. Moreover, if we have two orthogonal compact open bisections, it is clear
that their disjoint union is again a compact open bisection.

\begin{definition}
 \label{def:typesemigroup-groupoid}
 Let $\mathcal G$ be a (non-necessarily Hausdorff) \'etale groupoid, with a Hausdorff locally compact base space $X:= \mathcal G^{(0)}$.
 We define the {\it type semigroup} of $\mathcal G$ as
 $$\Typ (\mathcal G) = \Typ (S(\mathcal G)).$$
 \end{definition}

If $\mathcal G$ is second countable, then the type semigroup $\Typ (\mathcal G)$ is a countable conical refinement monoid.

 Now, we will show that $\Typ (\mathcal G)$ coincides with some recently defined type semigroups \cite{BL,PSS,RS}.
To this end, we recall the definition in \cite{BL} (which is equivalent to the ones given in \cite{PSS} and \cite{RS}, as shown in \cite{RS}).

Let $\mathcal G$ be an ample Hausdorff groupoid. Define an equivalence relation $\sim $ on the set
$$ \left\{ \bigcup_{i=1}^n A_i \times \{ i \} \mid n\in \N, A_i\in \mathcal E(S(\mathcal G)) \right\},$$
as follows: for $A= \bigcup_{i=1}^n A_i\times \{ i \}$ and $B= \bigcup_{j=1}^m B_j \times \{ j \}$, write $A\sim B$ if there exist $l \in \N$,
open compact bisections $W_1,\dots , W_l$, and $n_1,\dots ,n_l,m_1,\dots , m_l\in \N$ such that
$$A= \bigsqcup _ {k=1}^l d(W_k) \times \{ n_k \} , \quad B= \bigsqcup_{k=1}^l r(W_k) \times \{ m_k \} .$$

The type semigroup of $\mathcal G$ is then defined in \cite{BL} to be
$$ S(\mathcal G, \mathcal G^a) = \left\{ \bigcup_{i=1}^n A_i \times \{ i \} \mid n\in \N, A_i\in \mathcal E(S(\mathcal G)) \right\} / \sim $$
with the operation
$$ \Big[ \bigcup_{i=1}^n A_i \times \{ i \}\Big]+\Big[ \bigcup_{j=1}^m B_j \times \{ j \}\Big] = \Big[ \bigcup_{i=1}^n A_i \times \{ i \} \cup  \bigcup_{j=1}^m B_j \times \{ j+n \}\Big].$$

\begin{proposition}
 \label{prop:equality-defs}
 Let $\mathcal G$ be an ample Hausdorff groupoid. Then $\Typ (\mathcal G ) \cong  S( \mathcal G, \mathcal G ^a)$.
 \end{proposition}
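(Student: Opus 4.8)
The plan is to construct mutually inverse monoid homomorphisms between the two candidate type semigroups. Throughout I would use the following facts about $S(\mathcal G)$: its semilattice $\mathcal E(S(\mathcal G))$ of idempotents is exactly the set of compact open subsets of $X = \mathcal G^{(0)}$; for a compact open bisection $W$ one has $WW^* = r(W)$ and $W^*W = d(W)$ as elements of $\mathcal E(S(\mathcal G))$; and for disjoint compact open sets $A,B$ their orthogonal supremum in $S(\mathcal G)$ is $A \sqcup B$. Note also that every compact open $A \subseteq X$ is itself a compact open bisection with $d(A) = r(A) = A$.

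First I would define $\Phi \colon \Typ(\mathcal G) \to S(\mathcal G, \mathcal G^a)$ on generators by $\Phi(\typ(A)) = [A \times \{1\}]$. Since $\Typ(\mathcal G)$ is presented by generators and relations (Definition~\ref{def:type-semigroup-BIS}), to see that $\Phi$ extends to a well-defined monoid homomorphism it suffices to check the three defining relations. Relation~(1) holds because $\emptyset \times \{1\} \sim \emptyset$ (take the empty list of bisections) and $[\emptyset]$ is the additive identity. Relation~(2) holds because if $ss^* = A$ and $s^*s = B$ for a compact open bisection $s = W$, so that $A = r(W)$ and $B = d(W)$, then the single bisection $W$ with $n_1 = m_1 = 1$ witnesses $B \times \{1\} \sim A \times \{1\}$, i.e. $\Phi(\typ(A)) = \Phi(\typ(B))$. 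Relation~(3) holds because for disjoint $A,B$ the two identity bisections $W_1 = A$, $W_2 = B$ with colours $(n_1,n_2) = (1,1)$ and $(m_1,m_2) = (1,2)$ witness $(A \sqcup B) \times \{1\} \sim (A \times \{1\}) \sqcup (B \times \{2\})$, which is precisely $\Phi(\typ(A)) + \Phi(\typ(B))$.

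Next I would define the candidate inverse $\Psi \colon S(\mathcal G, \mathcal G^a) \to \Typ(\mathcal G)$ by $\Psi\big([\bigcup_{i=1}^n A_i \times \{i\}]\big) = \sum_{i=1}^n \typ(A_i)$. The crux of the whole argument, and the step I expect to be the main obstacle, is showing that $\Psi$ is well defined, i.e. that it respects $\sim$. Suppose $\bigcup_i A_i \times \{i\} \sim \bigcup_j B_j \times \{j\}$, witnessed by compact open bisections $W_1,\dots,W_l$ and colours $n_k,m_k$ with $\bigcup_i A_i \times \{i\} = \bigsqcup_k d(W_k) \times \{n_k\}$ and $\bigcup_j B_j \times \{j\} = \bigsqcup_k r(W_k) \times \{m_k\}$. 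Reading off the fibre of each side over a fixed colour $c$ gives $A_c = \bigsqcup_{k : n_k = c} d(W_k)$ as a disjoint union of compact open sets, so relation~(3) iterated yields $\typ(A_c) = \sum_{k : n_k = c}\typ(d(W_k))$; summing over $c$ gives $\sum_i \typ(A_i) = \sum_{k=1}^l \typ(d(W_k))$, and symmetrically $\sum_j \typ(B_j) = \sum_{k=1}^l \typ(r(W_k))$. Finally relation~(2) applied to each $s = W_k$ gives $\typ(d(W_k)) = \typ(r(W_k))$, whence $\sum_i \typ(A_i) = \sum_j \typ(B_j)$, as needed. That $\Psi$ is a monoid homomorphism is then immediate from the definition of addition in $S(\mathcal G, \mathcal G^a)$.

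It would then remain to verify that $\Phi$ and $\Psi$ are mutually inverse. On generators $\Psi(\Phi(\typ(A))) = \Psi([A \times \{1\}]) = \typ(A)$, so $\Psi \circ \Phi = \id$. Conversely every class decomposes as $[\bigcup_i A_i \times \{i\}] = \sum_i [A_i \times \{i\}]$, and each $[A_i \times \{i\}] = [A_i \times \{1\}]$ (the identity bisection $W = A_i$ with $n_1 = i$, $m_1 = 1$ witnesses the equivalence), so $\Phi(\Psi([\bigcup_i A_i \times \{i\}])) = \sum_i [A_i \times \{1\}] = [\bigcup_i A_i \times \{i\}]$ and $\Phi \circ \Psi = \id$. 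Hence $\Phi$ is the desired isomorphism. Everything outside the well-definedness of $\Psi$ is a routine matching of the two presentations, so I would concentrate the written proof on the fibrewise decomposition argument of the third paragraph.
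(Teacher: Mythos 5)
Your proof is correct and follows essentially the same route as the paper's: the paper defines the same map $\Phi([A]) = [A\times\{1\}]$, gets surjectivity from $[A\times\{1\}]=[A\times\{i\}]$, and proves injectivity by exactly your fibrewise decomposition (partitioning $\{1,\dots,l\}$ according to which $A_i$ each $d(W_k)$ lies in, then applying $\typ(d(W_k))=\typ(r(W_k))$ and additivity over disjoint unions). Recasting that computation as well-definedness of an explicit inverse $\Psi$, and spelling out the check that $\Phi$ respects the three defining relations of $\Typ(\mathcal G)$, are only organizational differences from the paper's argument.
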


\begin{proof}
 There is a well-defined monoid homomorphism
 $$\Phi \colon \Typ (\mathcal G) \to S(\mathcal G, \mathcal G^a)$$
 defined by $\Phi ([A]) = [A\times \{ 1  \}]$, for every compact open subset $A$ of $\mathcal G^0$.
 Since $[A\times \{ 1 \} ]= [A \times \{ i \}]$ in $S( \mathcal G, \mathcal G^a)$, we easily see that this map is surjective.
 To show it is injective, assume that $\Phi ( \sum_{i=1}^n [A_i]) = \Phi ( \sum_{j=1}^m [B_j])$ for compact open subsets $A_i,B_j$ of $\mathcal G^0$.
 Then
 $$ A:= \bigcup_{i=1}^n A_i\times \{ i \} \sim B:=  \bigcup_{j=1}^m B_j \times \{ j \}, $$
 so there are open compact bisections $W_1,\dots , W_l$, and elements $n_1,\dots ,n_l,m_1,\dots , m_l\in \N$ such that
$$A= \bigsqcup _ {k=1}^l d(W_k) \times \{ n_k \} , \quad B= \bigsqcup_{k=1}^l r(W_k) \times \{ m_k \} .$$
 It follows that there is a partition $\{ 1,\dots , l \} = \bigsqcup_{i=1}^n I_i $ such that for each $i\in \{ 1\dots , n\}$ we have
 $A_i= \bigsqcup_{j\in I_i} d(W_j)$. We thus get in $\Typ (\mathcal G)$
 $$\sum _{i=1}^n [A_i] = \sum_{i=1}^n \sum_{j\in I_i} [d(W_j)] = \sum_{i=1}^n \sum_{j\in I_i} [r(W_j)] = \sum_{j=1}^l [r(W_j)] = \sum_{j=1}^m [B_j],$$
 showing injectivity.
 \end{proof}

Let us now move to the framework of these notes. Namely, let $(E,C)$ be an adaptable separated graph, and consider the ample Hausdorff groupoid $\mathcal G_{tight}(S(E,C))$.

We want to relate the type semigroup of $\mathcal G_{tight}(S(E,C))$ with our original primely
generated refinement monoid $M(E,C)$ (see Section~\ref{Section1}). To do this, we need the next
lemma. Recall that the notation $\mathcal Z (s)$, for $s\in S(E,C)$, has been introduced in
Definition~\ref{Def 7.1}, and that the family $\{ \mathcal Z (s) \}_{s\in S(E,C)}$ is a basis of
open compact bisections of $\mathcal H$. Since $\mathcal G_{tight}(S(E,C)) \cong \mathcal H$ with
an isomorphism sending $U_s$ onto $\mathcal Z(s)$, we will identify these two groupoids, and thus
consider $\mathcal Z(s)$ as open compact subsets of $\mathcal G_{tight}(S(E,C))$. Under this
identification, Definition~\ref{def:familycalP} and Definition~\ref{Def 7.1} agree on
$\mathcal{Z}(e)$ for any idempotent $e\in \mathcal E$.

\begin{lemma}
 \label{lem:form-of bisections} The following properties hold for the groupoid $\mathcal G_{tight}(S(E,C))$.
 \begin{enumerate}
  \item Suppose that $e,e_1,\dots ,e_N\in \mathcal E$. Then
  $$ \mathcal Z (e) = \bigsqcup_{i=1}^N \mathcal Z (e_i)$$
  if and only if $\{ e_1,\dots e_N\}$ is an orthogonal finite cover of $e$.
    \item Given any compact open bisection $U$, we have
  $$U = \bigsqcup_{i=1}^N \mathcal Z (s_i) ,$$
  for suitable $s_i \in S$, $i=1,\dots , N$.
  \end{enumerate}
\end{lemma}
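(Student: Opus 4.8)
The plan is to first record three facts about the assignment $e \mapsto \mathcal{Z}(e)$ on the semilattice $\mathcal{E}$ (with $\mathcal{Z}(e)$ for idempotents as in Definition~\ref{def:familycalP}), and then deduce both parts from them. I would establish: (a) $\mathcal{Z}(ef) = \mathcal{Z}(e) \cap \mathcal{Z}(f)$ for all $e,f \in \mathcal{E}$; (b) $\mathcal{Z}(e) = \emptyset$ if and only if $e = 0$; and (c) $\mathcal{Z}(e) \subseteq \mathcal{Z}(f)$ if and only if $e \le f$. Fact (a) is immediate from the identity $\mathcal{Z}(e) = \mathcal{U}(\{e\},\emptyset) \cap \widehat{\mathcal{E}}_\infty$ and the intersection formula recalled in the proof of Proposition~\ref{prop:basis-of-topology}(1), together with $(ef)(ef)^* = ef$. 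For (b), a nonzero idempotent $e = e(\mu)$ has its $\mathcal{E}$-path $\mu$ extended to an infinite path (by looping indefinitely on the $\alpha(p,j)$ at a free prime, or by using $|s_{E_p}^{-1}(w)| \ge 2$ at a regular prime), and via Corollary~\ref{cor:ultrafilters} this yields an ultrafilter in $\mathcal{Z}(e)$. The forward implication of (c) follows from (a); the reverse is the main technical point, and I would argue it as follows: if $e \le f$ with $e \ne f$, then $f = e(\mu)$ and $e = e(\rho)$ with $\rho$ a proper extension of $\mu$, so Proposition~\ref{prop:basis-of-topology}(3) exhibits $\mathcal{Z}(f) \setminus \mathcal{Z}(e)$ as a \emph{nonempty} finite disjoint union of cylinders, whence $\mathcal{Z}(e) \subsetneq \mathcal{Z}(f)$. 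Applying this to $ef \le e$ and $ef \le f$ yields injectivity of $e \mapsto \mathcal{Z}(e)$ and hence (c).

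For part (1), I would use the standard characterisation that a filter $\eta$ is an ultrafilter exactly when, for every $g \in \mathcal{E}$, either $g \in \eta$ or there is $h \in \eta$ with $hg = 0$. Assume first that $\{e_1,\dots,e_N\}$ is an orthogonal finite cover of $\mathcal{F}_e$. Orthogonality gives $\mathcal{Z}(e_i) \cap \mathcal{Z}(e_j) = \mathcal{Z}(e_ie_j) = \emptyset$ by (a) and (b), and $e_i \le e$ gives $\mathcal{Z}(e_i) \subseteq \mathcal{Z}(e)$ by (c), so $\bigsqcup_i \mathcal{Z}(e_i) \subseteq \mathcal{Z}(e)$. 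For the reverse inclusion, take $\eta \in \mathcal{Z}(e)$; if $e_i \notin \eta$ for every $i$, pick $h_i \in \eta$ with $h_ie_i = 0$ and set $h = e\, h_1 \cdots h_N \in \eta$, a nonzero element of $\mathcal{F}_e$ with $he_i = 0$ for all $i$, contradicting the covering property. Conversely, if $\mathcal{Z}(e) = \bigsqcup_i \mathcal{Z}(e_i)$, then disjointness forces $\mathcal{Z}(e_ie_j) = \emptyset$, hence $e_ie_j = 0$; each $\mathcal{Z}(e_i) \subseteq \mathcal{Z}(e)$ gives $e_i \le e$ by (c), so $\{e_i\} \subseteq \mathcal{F}_e$; and for any $0 \ne f \le e$, a filter $\eta \in \mathcal{Z}(f) \subseteq \mathcal{Z}(e)$ lies in some $\mathcal{Z}(e_i)$, whence $fe_i \in \eta$ is nonzero, establishing the covering property.

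For part (2), recall from Theorem~\ref{thm:gordo1} that $\{\mathcal{Z}(s)\}_{s \in S}$ is a basis of compact open bisections, identified with the sets $U_s = \Theta(s, \mathcal{Z}(s^*s))$; in particular, for an idempotent $f \le s^*s$ one has the restriction formula $\mathcal{Z}(sf) = \{\xi \in \mathcal{Z}(s) : d(\xi) \in \mathcal{Z}(f)\}$, which I would verify from $[s,y][f,y] = [sf,y]$ in the groupoid of germs. Given a compact open bisection $U$, write $U = \bigcup_{i=1}^n \mathcal{Z}(s_i)$ with each $\mathcal{Z}(s_i) \subseteq U$, using compactness and that $\{\mathcal{Z}(s)\}$ is a basis. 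Since $U$ is a bisection, $d|_U$ is a homeomorphism onto the compact open set $d(U) \subseteq \widehat{\mathcal{E}}_\infty$, and $\mathcal{Z}(s_i)$ is determined by its compact open source $D_i := d(\mathcal{Z}(s_i)) = \mathcal{Z}(s_i^*s_i)$. The plan is then to disjointify in the unit space: by Corollary~\ref{cor:compact-opens} each $D_i$ is a finite disjoint union of basic cylinders, so $d(U) = \bigcup_i D_i$ is covered by basic cylinders, and Lemma~\ref{lem:disjoint-unions} (whose hypotheses hold on $\widehat{\mathcal{E}}_\infty$ by Proposition~\ref{prop:basis-of-topology}) expresses $d(U) = \bigsqcup_{k=1}^N \mathcal{Z}(f_k)$ with each $\mathcal{Z}(f_k)$ contained in some $D_{i(k)}$. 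Lifting through the bisection, I set $u_k := s_{i(k)} f_k$ (noting $f_k \le s_{i(k)}^*s_{i(k)}$ by (c)); then $\mathcal{Z}(u_k) = \{\xi \in \mathcal{Z}(s_{i(k)}) : d(\xi) \in \mathcal{Z}(f_k)\} \subseteq U$ has source $\mathcal{Z}(f_k)$, and since $d|_U$ is a bijection and the $\mathcal{Z}(f_k)$ partition $d(U)$, the sets $\mathcal{Z}(u_k)$ partition $U$, giving $U = \bigsqcup_{k=1}^N \mathcal{Z}(u_k)$.

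I expect the main obstacle to be the order-reflection fact (c), equivalently injectivity of $e \mapsto \mathcal{Z}(e)$, which underpins the whole argument; and, for part (2), the bookkeeping that reduces the disjointification of an arbitrary bisection to the already-established disjointification in the unit space, carried out via the restriction formula $\mathcal{Z}(sf) = \mathcal{Z}(s) \cap d^{-1}(\mathcal{Z}(f))$.
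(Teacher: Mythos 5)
Your proof is correct, and for the converse direction of part~(1) and for all of part~(2) it is essentially the paper's own argument: the paper likewise derives orthogonality and the covering property from disjointness and nonemptiness of cylinders, and proves (2) by pushing a finite basic cover of $U$ down to $s(U)$, disjointifying there via Lemma~\ref{lem:disjoint-unions} and Corollary~\ref{cor:compact-opens}, and lifting back through the bisection exactly as you do with your sets $\mathcal Z(s_{i(k)}f_k)$. The one place you genuinely diverge is the forward direction of (1): the paper disposes of it in a single line by invoking tightness of the representation $\iota\colon S(E,C)\to\mathcal S_K(E,C)$ (Theorem~\ref{thm:tight}), i.e.\ it routes the claim through the algebra of Section~4, whereas you argue directly on the filter space using the maximality characterisation of ultrafilters (if no $e_i$ lies in $\eta$, then $e\,h_1\cdots h_N\in\eta$ is a nonzero element of $\mathcal F_e$ orthogonal to every $e_i$). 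Your route is more self-contained, needing only Theorem~\ref{thm:tightfilters} and elementary filter manipulations; the paper's is shorter given the machinery already in place. You are also more explicit than the paper about the order-reflection fact (c), which the paper uses tacitly both to see that $\{e_1,\dots,e_N\}\subseteq\mathcal F_e$ (as the definition of a cover requires) and to justify that the source map carries $\mathcal Z(s_{i(k)}f_k)$ onto $\mathcal Z(f_k)$. One small caveat there: Proposition~\ref{prop:basis-of-topology}(3) as stated does not assert that $\mathcal Z(\mu)\setminus\mathcal Z(\rho)$ is nonempty, so your claim that $e<f$ forces $\mathcal Z(e)\subsetneq\mathcal Z(f)$ really rests on the explicit decomposition in its proof (or on a direct extension argument using $g(p,i)\ge 1$ and $|s_{E_p}^{-1}(w)|\ge 2$); this is true and easy, but should be said rather than attributed to the bare statement of that proposition.
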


\begin{proof} $\mbox{ }$
 (1) If $\{ e_1,\dots , e_N \}$ is an orthogonal finite cover of $e$, then $\mathcal Z (e) = \bigsqcup_{i=1}^N \mathcal Z (e_i)$ because the representation
 $\iota \colon S(E,C)\to \mathcal S_K(E,C)$ is tight. Conversely, assume that $\mathcal Z (e) = \bigsqcup_{i=1}^N \mathcal Z (e_i)$. Then the idempotents $e_i$
 are mutually orthogonal. Now let $f$ be a non-zero idempotent such that $f\le e$. Then $\mathcal Z (f)$ is a non-empty subset of $\mathcal Z (e)$, and thus
 $\mathcal Z (fe_i) =\mathcal Z (f) \cap \mathcal Z (e_i)$ is non-empty for some $i=1,\dots ,n$. This implies that $fe_i\ne 0$. Hence $\{ e_1,\dots e_N \}$ is
 an orthogonal finite cover of $e$.

 (2) Let $U$ be a compact open bisection. We have $U=\bigcup_{i=1}^n \mathcal Z (s_i)$ for elements
 $s_i\in S(E,C)$. Now, observe that $s(U)= \bigcup_{i=1}^n s(\mathcal Z(s_i)) = \bigcup_{i=1}^n \mathcal Z (s_i^ *s_i)$. By using the standard decomposition sketched in the proof
 of Lemma~\ref{lem:disjoint-unions}, we can write
 $$s(U) = \bigsqcup_{k=1}^N U_k ,$$
 a disjoint union of open compact subsets $U_k$ of $\widehat{\mathcal E}_{\infty}$ such that each $U_k$ is contained in a (not necessarily unique) set $\mathcal Z (s_i^*s_i)$.
 Now, by using Corollary~\ref{cor:compact-opens}, we can write each $U_k$ as a disjoint union of sets of the form $\mathcal Z (e_{k,l})$, for idempotents $e_{k,l}\in \mathcal E$.
 For each $k\in\{ 1,\dots , N \}$, let $\tau (k)\in \{ 1,\dots , n\}$ be such that $U_k\subseteq s(\mathcal Z (s_{\tau (k)}))$. Then
 $$ U = \bigsqcup_{k,l} \mathcal Z (s_{\tau (k)}e_{k,l}),$$
 because, since $U$ is a bisection, $s$ defines a homeomorphism $U \to s(U)$, sending $\mathcal Z (s_{\tau (k)}e_{k,l})$ onto $\mathcal Z (e_{k,l})$. \end{proof}

With this lemma, we are ready to show the main result of this paper.

\begin{theorem}
 \label{thm:monoids-agreement} Let $(E,C)$ be an adaptable separated graph, $S(E,C)$ be the inverse semigroup associated to $(E,C)$, and let $\mathcal G_{tight}(S(E,C))$ be the groupoid of germs associated to the canonical action of $S(E,C)$ on the space of ultrafilters $\widehat{\mathcal E}_{\infty}$. Then, there is a monoid isomorphism
 $$\psi \colon M= M(E,C) \to \Typ (\mathcal G_{tight}(S(E,C)))$$
 such that $\psi (a_v) = [\mathcal Z (v)]$ for every $v\in E^0$.
 \end{theorem}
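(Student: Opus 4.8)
The plan is to realise $\psi$ as one half of a mutually inverse pair of homomorphisms, building an explicit $\rho\colon\Typ(\mathcal G_{tight}(S(E,C)))\to M(E,C)$ running the other way. Throughout I identify $\mathcal G_{tight}(S(E,C))$ with the groupoid $\mathcal H$ and use the concrete model of $\Typ$ from Definition~\ref{def:type-semigroup-BIS}, applied to the Boolean inverse semigroup $S(\mathcal G)$ of compact open bisections, whose idempotents are the compact open subsets of $\widehat{\mathcal E}_\infty$; recall that $s\mapsto\mathcal Z(s)$ is multiplicative and $*$-preserving, so $\mathcal Z(ss^*)=\mathcal Z(s)\mathcal Z(s)^*$, etc. To define $\psi$ I set $\psi(a_v)=[\mathcal Z(v)]$ and check the defining relations $a_v=\sum_{e\in X}a_{r(e)}$ ($X\in C_v$). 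Remark~\ref{rk:equalities} shows that $\{ee^*:e\in X\}$ (for $v$ regular) and $\{\alpha(p,i)\alpha(p,i)^*\}\cup\{\beta(p,i,t)\beta(p,i,t)^*:t\}$ (for $v=v^p$ free) are orthogonal covers of $v$; by Lemma~\ref{lem:form-of bisections}(1) this yields disjoint-union decompositions of $\mathcal Z(v)$, so type-additivity gives $[\mathcal Z(v)]=\sum_{e\in X}[\mathcal Z(ee^*)]$, while the bisection $\mathcal Z(e)$ satisfies $\mathcal Z(e)\mathcal Z(e)^*=\mathcal Z(ee^*)$ and $\mathcal Z(e)^*\mathcal Z(e)=\mathcal Z(r(e))$, so relation (2) of Definition~\ref{def:type-semigroup-BIS} gives $[\mathcal Z(ee^*)]=[\mathcal Z(r(e))]$. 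Hence $\psi$ is a well-defined homomorphism.

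Next I would build $\rho$. Each idempotent $e\in\mathcal E$ equals $e(\mu)$ for a unique $\mathcal E$-path $\mu$ (Definition~\ref{def:familycalP}); put $w(e):=r(\mu)\in E^0$ and $\rho_0(e):=a_{w(e)}$. The key lemma is that whenever $\{e_1,\dots,e_N\}$ is an orthogonal finite cover of $e$ one has $a_{w(e)}=\sum_i a_{w(e_i)}$ in $M(E,C)$. By Proposition~\ref{prop:converseExpanded} such a cover is an expanded set of $e$, so it is reached from $e$ by finitely many simple expansions, and I would induct on their number. The point is that a single simple expansion (Definition~\ref{def:simple-expansion}) is precisely a defining relation of $M(E,C)$: the free simple expansion at $j_0$ replaces $\gamma\prod_j\alpha(p,j)^{k_j}(\alpha(p,j)^*)^{k_j}\gamma^*$ (terminal vertex $v^p$) by terms with terminal vertices $v^p$ and $r(\beta(p,j_0,s))$, recording $a_{v^p}=a_{v^p}+\sum_s a_{r(\beta(p,j_0,s))}$, the relation for $X^{(p)}_{j_0}\in C_{v^p}$; the regular simple expansion replaces $\gamma\lambda\lambda^*\gamma^*$ (terminal vertex $r(\lambda)$) by terms with terminal vertices $r(z)$, $z\in s^{-1}(r(\lambda))$, recording $a_{r(\lambda)}=\sum_{z\in s^{-1}(r(\lambda))}a_{r(z)}$. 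Each simple expansion therefore preserves $\sum_i\rho_0(e_i)$, proving the lemma.

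With the key lemma in hand I can define $\rho$ on a compact open $A\subseteq\widehat{\mathcal E}_\infty$ by $\rho(A):=\sum_i\rho_0(e_i)$ for any decomposition $A=\bigsqcup_i\mathcal Z(e_i)$ (which exists by Corollary~\ref{cor:compact-opens} and Lemma~\ref{lem:form-of bisections}(1)). This is independent of the decomposition: given a second one $\bigsqcup_j\mathcal Z(f_j)$, the common refinement $\{\mathcal Z(e_if_j)\}$ restricts, for each fixed $i$, to an orthogonal cover of $e_i$, so the key lemma gives $\rho_0(e_i)=\sum_j\rho_0(e_if_j)$, and summing symmetrically in $i,j$ equates the two totals. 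I would then check that $\rho$ respects the three relations of Definition~\ref{def:type-semigroup-BIS}: (1) and (3) are immediate from well-definedness, and for (2) I decompose a bisection $W=\bigsqcup_k\mathcal Z(s_k)$ (Lemma~\ref{lem:form-of bisections}(2)) and reduce to showing $w(s_ks_k^*)=w(s_k^*s_k)$ for each $s_k=\gamma\mathbf m(p)\eta^*$. A direct monomial computation (using Lemma~\ref{lem:mult-of-monomials}) gives, in the free case, $s_ks_k^*=\gamma\prod_j\alpha(p,j)^{k_j}(\alpha(p,j)^*)^{k_j}\gamma^*$ and $s_k^*s_k=\eta\prod_j\alpha(p,j)^{l_j}(\alpha(p,j)^*)^{l_j}\eta^*$, both with terminal vertex $v^p$; in the regular case $s_ks_k^*=\gamma\lambda\lambda^*\gamma^*$ and $s_k^*s_k=\eta\nu\nu^*\eta^*$, both with terminal vertex $r(\lambda)=r(\nu)$. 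So $\rho$ descends to a homomorphism $\Typ(\mathcal G_{tight}(S(E,C)))\to M(E,C)$.

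Finally, $\rho\circ\psi=\id$ because $\rho([\mathcal Z(v)])=a_v$, and $\psi\circ\rho=\id$ because on generators $\psi(\rho([\mathcal Z(e)]))=[\mathcal Z(w(e))]=[\mathcal Z(e)]$, the last equality coming from bisections read off $e$ itself (for regular $e=\gamma\lambda\lambda^*\gamma^*$ take $s=\gamma\lambda$ with $ss^*=e$ and $s^*s=\mathcal Z(r(\lambda))$; for free $e$ take $s=\gamma\prod_j\alpha(p,j)^{l_j}\gamma^*$ followed by $\gamma$). Hence $\psi$ and $\rho$ are mutually inverse and $\psi$ is the asserted isomorphism. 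I expect the main obstacle to be the well-definedness of $\rho$: one must confirm that the purely combinatorial bookkeeping of orthogonal covers and their common refinements matches the monoid relations exactly as the simple-expansion analysis predicts, and that the two monomial reductions for $ss^*$ and $s^*s$ invariably land on $\mathcal E$-paths with a common terminal vertex. Everything rests on the tight dictionary between orthogonal covers and expansions furnished by Lemma~\ref{lem:expanded} and Proposition~\ref{prop:converseExpanded}.
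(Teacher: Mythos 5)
Your argument is correct, and the combinatorial core is the same as the paper's --- both proofs rest on Remark~\ref{rk:equalities}, Lemma~\ref{lem:expanded}, Lemma~\ref{lem:form-of bisections} and, crucially, Proposition~\ref{prop:converseExpanded}, which converts an orthogonal finite cover into an expanded set and hence into an iterated application of the defining relations of $M(E,C)$. Where you diverge is in the overall architecture. The paper proves that $\psi$ is bijective directly: surjectivity is the one-line observation that $[\mathcal Z(\gamma\mathbf m(p)\gamma^*)]=[\mathcal Z(v)]$ for the terminal vertex $v$, and injectivity is obtained by passing through Proposition~\ref{prop:equality-defs} to the model $S(\mathcal G,\mathcal G^a)$, unwinding the equivalence relation $\sim$ into partitions $\{1,\dots,l\}=\bigsqcup I_i=\bigsqcup J_j$, and applying Proposition~\ref{prop:converseExpanded} to each block to get $a_{v_i}=\sum_{k\in I_i}[s_k^*s_k]$. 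You instead build an explicit two-sided inverse $\rho$ directly on the presentation of $\Typ$ from Definition~\ref{def:type-semigroup-BIS}, which obliges you to verify well-definedness on compact opens via common refinements and to check all three defining relations; your ``key lemma'' ($a_{w(e)}=\sum_i a_{w(e_i)}$ for any orthogonal cover) is exactly the content the paper extracts inside its injectivity argument, just isolated and proved by induction on simple expansions. What your route buys is independence from the identification $\Typ(\mathcal G)\cong S(\mathcal G,\mathcal G^a)$ and a reusable statement (the key lemma) that makes the inverse map explicit; what it costs is the extra bookkeeping of well-definedness and the verification that $w(ss^*)=w(s^*s)$ for every $s=\gamma\mathbf m(p)\eta^*$ --- a computation you correctly carry out via Lemma~\ref{lem:mult-of-monomials} in both the free and regular cases, but which the paper never needs in this explicit form because it works with vertices $v_i$, $w_j$ from the outset. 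Both proofs are complete; yours is somewhat longer but more self-contained.
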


\begin{proof}
 Since the defining relations in the semigroup $M(E,C)$ are generated by the expansions in Remark~\ref{rk:equalities}, Lemmas \ref{lem:expanded}~and~\ref{lem:form-of bisections} show that these
 relations are preserved under $\psi$. Hence, we obtain a monoid homomorphism $\psi$ satisfying $\psi (a_v) = [\mathcal Z (v)]$ for every $v\in E^0$.

Given an idempotent $e = \gamma \mathbf m (p) \gamma^*$ in $S(E,C)$, we obviously have $[\mathcal Z (e)] = [\mathcal Z (v)]$, where $v= v^p$ if $p$ is a free prime and $v= r(\lambda)\in E_p^0$ if
$p$ is a regular prime and $\mathbf m (p) = \lambda \lambda ^*$. It follows that the map $\psi$ is surjective.

We now show that $\psi$ is injective. Assume that $\psi (\sum_{i=1}^n a_{v_i} ) = \psi
(\sum_{j=1}^m a_{w_j})$ in $\Typ (\mathcal G)$. Using Proposition~\ref{prop:equality-defs}, we see
that this implies that
$$A:= \bigsqcup _{i=1}^n \mathcal Z (v_i) \times \{ i \} \sim B:= \bigsqcup_{j=1}^m  \mathcal Z (w_j) \times \{ j \}$$
in $S(\mathcal G, \mathcal G ^a)$. Now, by the definition of $\sim$ and Lemma~\ref{lem:form-of
bisections}, there exist $l\in \N$, $s_1,\dots , s_l \in S(E,C)$, and $n_1,\dots n_l,m_1,\dots
,m_l$ such that
$$A= \bigsqcup _{k=1}^l s(\mathcal Z (s_k))\times \{ n_k \},\quad B= \bigsqcup_{k=1}^l r(\mathcal Z (s_k) )\times \{ m_k \}.$$
It follows that there are partitions $\{ 1,\dots , l \} = \bigsqcup_{i=1}^n I_i$ and $\{ 1,\dots , l \} = \bigsqcup_{j=1}^m J_j$ such that
$$\mathcal Z (v_i) = \bigsqcup_{k\in I_i} s(\mathcal Z (s_k)) \qquad \text{and} \qquad \mathcal Z (w_j) = \bigsqcup _{k\in J_j} r(\mathcal Z (s_k)) $$
for $i=1,\dots , n$ and $j=1,\dots , m$. But now, $s(\mathcal Z (s_k)) = \mathcal Z (s_k^*s_k)$,
and $r(\mathcal Z (s_k)) = \mathcal Z (s_ks_k^*)$. Thus, $\mathcal Z (v_i) = \bigsqcup_{k\in I_i}
\mathcal Z (s_k^*s_k)$, which by Lemma~\ref{lem:form-of bisections}(1) implies that $\{ s_k^*s_k
\}_{k\in I_i}$ is an orthogonal finite cover of $v_i$, for $i=1,\dots , n$. It follows from
Proposition~\ref{prop:converseExpanded} that $\{ s_k^*s_k \}_{k\in I_i}$ is an expanded set of
$v_i$, and thus that $a_{v_i} = \sum _{k\in I_i} [s_k^*s_k]$ for $i=1,\dots , n$, where we denote
by $[s_k^*s_k]$ the class in $M(E,C)$ of the vertex in $E$ representing the idempotent $s_k^*s_k$.
Analogously, $a_{w_j} = \sum _{k\in J_j} [s_ks_k^*]$ for $j=1,\dots , m$. Since $[s_k^*s_k]=
[s_ks_k^*]$ in $M(E,C)$, we obtain
\begin{equation*}
 \sum_{i=1}^n a_{v_i}  = \sum_{i=1}^n \sum _{k\in I_i} [s_k^*s_k] = \sum_{k=1}^l [s_k^*s_k]
 = \sum_{k=1}^l [s_ks_k^*] = \sum_{j=1}^m \sum _{k\in J_j} [s_ks_k^*]
 = \sum_{j=1}^m a_{w_j},
\end{equation*}
and so $\psi$ is injective.
\end{proof}

The following corollary follows immediately from Theorems
\ref{thm:maingraphs-monoids}~and~\ref{thm:monoids-agreement}.

\begin{corollary}
 \label{cor:realizing-fgcrmr}
 Let $M$ be a finitely generated conical refinement monoid. Then there is an adaptable separated graph $(E,C)$ such that
 $$M\cong \Typ (\mathcal G_{tight} (S(E,C))).$$
 In particular, all finitely generated conical refinement monoids arise as type semigroups of ample Hausdorff groupoids.
 \end{corollary}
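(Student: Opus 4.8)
The plan is to chain together the two main structural results already in hand, since the corollary is a pure composition of isomorphisms. First I would invoke Theorem~\ref{thm:maingraphs-monoids}(2): because $M$ is assumed to be a finitely generated conical refinement monoid, there exists an adaptable separated graph $(E,C)$ together with a monoid isomorphism $M \cong M(E,C)$. This is the step that supplies the combinatorial model, and it is the genuinely deep input; but it is quoted wholesale from \cite{ABP19}, so no further argument is needed here.

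Next I would apply Theorem~\ref{thm:monoids-agreement} to this particular adaptable separated graph $(E,C)$, yielding a monoid isomorphism $\psi\colon M(E,C) \to \Typ(\mathcal G_{tight}(S(E,C)))$, determined on generators by $\psi(a_v) = [\mathcal Z(v)]$. Composing the two isomorphisms produces
\[
M \;\cong\; M(E,C) \;\cong\; \Typ(\mathcal G_{tight}(S(E,C))),
\]
which is exactly the first assertion. Note that no compatibility of the chosen generators between the two steps is required: both maps are honest monoid isomorphisms, so their composite is automatically a monoid isomorphism, and this is the only point one might be tempted to over-think.

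For the final ``in particular'' statement, the one remaining thing to confirm is that $\mathcal G_{tight}(S(E,C))$ genuinely belongs to the class advertised, namely the ample Hausdorff groupoids for which $\Typ$ is defined in Definition~\ref{def:typesemigroup-groupoid}. This is precisely the content of Theorem~\ref{thm:gordo1}, which establishes that $\mathcal G_{tight}(S(E,C)) \cong \mathcal H$ is ample, Hausdorff and \'etale. Since $M$ was an arbitrary finitely generated conical refinement monoid, every such monoid therefore arises as the type semigroup of an ample Hausdorff groupoid. There is essentially no obstacle internal to this corollary: all the difficulty has been front-loaded into Theorem~\ref{thm:maingraphs-monoids} (realizing arbitrary finitely generated conical refinement monoids by adaptable separated graphs) and Theorem~\ref{thm:monoids-agreement} (identifying $M(E,C)$ with the type semigroup of its associated groupoid), so the proof is a two-line deduction.
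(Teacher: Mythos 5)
Your proposal is correct and matches the paper exactly: the corollary is deduced by composing Theorem~\ref{thm:maingraphs-monoids}(2) with Theorem~\ref{thm:monoids-agreement}, with Theorem~\ref{thm:gordo1} guaranteeing the groupoid is ample and Hausdorff. The paper itself records the proof as an immediate consequence of those two theorems, so there is nothing to add.
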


 \begin{remark}
  \label{rem:lastcomments}
 {\rm It would be interesting to know whether the natural map
 $$\iota \colon  M(E,C)\cong \Typ (\mathcal G_{tight} (S(E,C))) \to \mathcal V (A_K (\mathcal G_{tight} (S(E,C)))) $$
 is an isomorphism for any adaptable separated graph $(E,C)$. It is proven in \cite[Theorem A]{ABP19b} that the corresponding map $\iota_Q \colon M(E,C) \to \mathcal V (Q_K (E,C))$ is an isomorphism for any adaptable
 separated graph, where $Q_K(E)$ is a certain universal localization of
 $A_K(\mathcal G_{tight} (S(E,C)))$ (see \cite[Section 2]{ABP19b} for details). Note that the map $\iota_Q$ factors through the map $\iota$ and so, since $\iota_Q$ is injective, we conclude that at least the map $\iota $ is injective. In the C*-algebraic setting, it would be of interest to study the equivalent question; namely, whether the map
 $$\iota_{C^*}\colon M(E,C) \to  \mathcal V (C^* (\mathcal G_{tight} (S(E,C))))$$
 is injective. In this case, it is unlikely that the map $\iota _{C^*}$ is surjective.}
  \end{remark}

\section*{Acknowledgments}

This research project was initiated when the authors were at the Centre de Recerca Matem\`atica as part of the Intensive Research Program \emph{Operator
algebras: dynamics and interactions} in 2017, and the work was significantly supported by the
research environment and facilities provided there. We thank the Centre de Recerca Matem\`atica for its support.
We would like to thank to the anonymous referees for their very useful comments and suggestions.

\end{document}